\numberwithin{equation}{section}
\theoremstyle{plain}
\newtheorem{thm}{Theorem}[section]
\newtheorem{lem}[thm]{Lemma}
\newtheorem{prop}[thm]{Proposition}
\newtheorem{cor}[thm]{Corollary}
\theoremstyle{definition}
\newtheorem{defn}[thm]{Definition}
\newtheorem{exmp}[thm]{Example}
\theoremstyle{remark}
\newtheorem{rem}[thm]{Remark}
\def\Acal{\mathcal{A}}
\def\Dcal{\mathcal{D}}
\def\Ecal{\mathcal{E}}
\def\Hcal{\mathcal{H}}
\def\Lcal{\mathcal{L}}
\def\Scal{\mathcal{S}}
\def\Ebb{\mathbb{E}}
\def\Nbb{\mathbb{N}}
\def\Pbb{\mathbb{P}}
\def\Qbb{\mathbb{Q}}
\def\Rbb{\mathbb{R}}
\def\Wbb{\mathbb{W}}
\def\Zbb{\mathbb{Z}}
\def\1bb{\mathbbm{1}}
\DeclareMathOperator{\Law}{Law}
\DeclareMathOperator{\HS}{HS}
\let\epsilon\varepsilon
\let\phi\varphi
\title{Existence and space-time regularity for stochastic heat equations on p.c.f. fractals}
\author{Ben Hambly\footnote{Mathematical Institute, University of Oxford, Woodstock Road, Oxford, OX2 6GG, UK. Email: hambly@maths.ox.ac.uk.}\enspace  and Weiye Yang\footnote{Mathematical Institute, University of Oxford, Woodstock Road, Oxford, OX2 6GG, UK. Email: weiye.yang@maths.ox.ac.uk. ORCiD: 0000-0003-2104-1218.}}
\date{} 
\begin{document}
\maketitle
\begin{abstract}
We define linear stochastic heat equations (SHE) on p.c.f.s.s. sets equipped with regular harmonic structures. 
We show that if the spectral dimension of the set is less than two, then function-valued ``random-field'' solutions to 
these SPDEs exist and are jointly H\"{o}lder continuous in space and time. We calculate the respective H\"{o}lder 
exponents, which extend the well-known results on the H\"{o}lder exponents of the solution to SHE on the unit interval. 
This shows that the ``curse of dimensionality'' of the SHE on $\Rbb^n$ depends not on the geometric dimension 
of the ambient space but on the analytic properties of the operator through the spectral dimension. To prove 
these results we establish generic continuity theorems for stochastic processes indexed by these p.c.f.s.s. sets that 
are analogous to Kolmogorov's continuity theorem. We also investigate the long-time behaviour of the solutions to 
the fractal SHEs.
\end{abstract}

\section{Introduction}
The stochastic heat equation (or SHE) on $\Rbb^n, n\in \mathbb{N}$ is a stochastic partial differential equation 
which can be expressed formally as
\begin{equation*}
\begin{split}
\frac{\partial u}{\partial t}(t,x) &= Lu(t,x) + \dot{W}(t,x),\\
u(0,\cdot) &= u_0
\end{split}
\end{equation*}
for $(t,x) \in [0,\infty) \times \Rbb^n$, where $L$ is the Laplacian on $\Rbb^n$, $u_0$ is a (sufficiently regular) function on $\Rbb^n$ and $\dot{W}$ is a space-time white noise on $\Rbb \times \Rbb^n$. Written in the differential notation of stochastic calculus this is equivalent to
\begin{equation*}
\begin{split}
du(t) &= L u(t)dt + dW(t),\\
u(0) &= u_0,
\end{split}
\end{equation*}
where $W$ is a cylindrical Wiener process on $\Lcal^2(\Rbb^n)$. A solution to this SPDE is a process $u = (u(t):t \in [0,T])$ taking values in some space containing $\Lcal^2(\Rbb^n)$ that satisfies the above equations in some weak sense; see \cite{daprato2014} for details. The SHE on $\Rbb^n$ is one of the prototypical examples of an SPDE and has been widely studied, see for example \cite{dawson1972}, \cite{funaki1983} and \cite{walsh1986}. It has two notable properties that are relevant to the present paper. The first is its so-called ``curse of dimensionality''. Solutions to the SHE on $\Rbb^n$ are function-valued only in the case $n = 1$; in dimension $n \geq 2$ solutions are forced to take values in a wider space of distributions on $\Rbb^n$, see \cite{walsh1986}. Secondly if $n = 1$ and $u_0 = 0$ then the solution is unique and jointly H\"{o}lder continuous in space and time, see again \cite{walsh1986}. One of the aims of the present paper is to investigate what happens regarding these two properties in the setting of finitely ramified fractals, which behave in many ways like spaces with dimension between one and two.

The family of spaces that we will be considering is the class of connected \textit{post-critically finite self-similar} (or \textit{p.c.f.s.s.}) sets endowed with \textit{regular harmonic structures}. This family includes many well-known fractals such as the Sierpinski gasket and the Vicsek fractal but not the Sierpinski carpet. The unit interval $[0,1]$ also has several formulations in the language of p.c.f.s.s. sets that belong to this family. Analysis on these sets is a relatively young field which started with the construction of a ``Brownian motion'' on the Sierpinski gasket in \cite{goldstein1987}, \cite{kusuoka1987} and \cite{barlow1988}. This broader theory was then developed and provides a concrete framework where reasonably explicit results can be obtained, see \cite{kigami2001} and \cite{barlow1998}. Associated with a regular harmonic structure on a p.c.f.s.s. set $(F, (\psi_i)_{i=1}^M)$ is an operator, called the \textit{Laplacian} on $F$, which is the generator of a ``Brownian motion'' on $F$ by analogy with the Laplacian on $\Rbb^n$ as the generator of Brownian motion in $\Rbb^n$. We will see that there exists a constant $d_s > 0$ associated with the harmonic structure known as the \textit{spectral dimension}, and it will turn out that the assumption that the harmonic structure is regular implies that $d_s \in [1,2)$. The existence of a Laplacian allows us to define certain PDEs and SPDEs on $F$, such as a heat equation and a stochastic heat equation. The former has been studied extensively, see \cite[Chapter 5]{kigami2001} and further references. The latter is the subject of the present paper.

For examples of some previous work in this area, in \cite{foondun2011} it is shown that on certain fractals a stochastic heat equation can be defined which yields a random-field solution, that is, a solution which is a random map $[0,T] \times F \to \Rbb$. We extend this result in the main theorem of Section \ref{ptwise} of the present paper. In \cite{issoglio2015} (see also \cite{hinz2011}) it is shown that solutions to some nonlinear stochastic heat equations on more general metric measure spaces have H\"{o}lder continuous paths when considered as a random map from a ``time'' set to some space of functions. However in that paper the authors do not consider the H\"{o}lder exponents of the solution when considered as a random field, which is what we will do.

The structure of the present paper is as follows: In the following subsection we describe the precise set-up of the problem and the specific SPDE that we will be studying, and state a theorem which is an important corollary of our main result. In Section \ref{existsec} we recall some useful spectral theory for Laplacians on p.c.f.s.s. sets from \cite{kigami2001} and show that (unique) solutions to the SPDE exist as $\Lcal^2(F)$-valued stochastic processes. In Section \ref{Kolmogorov} we prove generic results analogous to Kolmogorov's continuity theorem for families of random variables indexed by $F$ and by $[0,1] \times F$. In Section \ref{ptwise} we show that the resolvent densities associated with the Laplacian are Lipschitz continuous with respect to the resistance metric on $F$. More importantly we also show that evaluations of solutions to the SPDE at points $(t,x) \in [0,\infty) \times F$ can be done in a well-defined way, which is necessary for us to talk about continuity of these solutions. Section \ref{Holder} contains the main results of the paper, which use our continuity theorems to establish space-time H\"{o}lder continuity of solutions to the SPDE and compute the respective H\"{o}lder exponents. Section \ref{invar} serves as a ``coda'' of the paper, where we prove results on the invariant measures and long-time behaviour of the solutions to the SPDE.

\subsection{Description of the problem}\label{description}

Let $M \geq 2$ be an integer. Let $\Scal = (F, (\psi_i)_{i=1}^M)$ be a connected p.c.f.s.s. set (see \cite{kigami2001}) such that $F$ is a compact metric space and the $\psi_i: F \to F$ are injective strict contractions on $F$. Let $I = \{ 1,\ldots,M \}$ and for each $n \geq 0$ let $\Wbb_n = I^n$. Let $\Wbb_* = \bigcup_{n \geq 0} \Wbb_n$ and let $\Wbb = I^\Nbb$. We call the sets $\Wbb_n$, $\Wbb_*$ and $\Wbb$ \textit{word spaces} and we call their elements \textit{words}. Note that $\Wbb_0$ is a singleton containing an element known as the \textit{empty word}. Words $w \in \Wbb_n$ or $w \in \Wbb$ will be written in the form $w = w_1w_2w_3\ldots$ with $w_i \in I$ for each $i$. For a word $w = w_1, \ldots ,w_n \in \Wbb_*$, let $\psi_w = \psi_{w_1} \circ \cdots \circ \psi_{w_n}$ and let $F_w = \psi_w(F)$.

If $\Wbb$ is endowed with the standard product topology then there is a canonical continuous surjection $\pi: \Wbb \to F$ given in \cite[Lemma 5.10]{barlow1998}. Let $P \subset \Wbb$ be the post-critical set of $\Scal$ (see \cite[Definition 1.3.4]{kigami2001}), which is finite by assumption. Then let $F^0 = \pi(P)$, and for each $n \geq 1$ let $F^n = \bigcup_{w \in \Wbb_n} \psi_w(F^0)$. Let $F_* = \bigcup_{n = 0}^\infty F^n$. It is easily shown that $(F^n)_{n\geq 0}$ is an increasing sequence of finite subsets and that $F_*$ is dense in $F$.

Let the pair $(A_0,\textbf{r})$ be a regular irreducible harmonic structure on $\Scal$ such that $\textbf{r} = (r_1,\ldots,r_M) \in \Rbb^M$ for some constants $r_i > 0$, $i \in I$ (harmonic structures are defined in \cite[Section 3.1]{kigami2001}). Here \textit{regular} means that $r_i \in (0,1)$ for all $i$. Let $r_{\min} = \min_{i \in I} r_i$ and $r_{\max} = \max_{i \in I} r_i$. If $n \geq 0$, $w = w_1,\ldots w_n \in \Wbb_*$ then write $r_w := \prod_{i=1}^n r_{w_i}$. Let $d_H > 0$ be the unique number such that
\begin{equation*}
\sum_{i \in I} r_i^{d_H} = 1.
\end{equation*}
Then let $\mu$ be the self-similar Borel probability measure on $F$ such that for any $n \geq 0$, if $w \in \Wbb_n$ then $\mu(F_w) = r_w^{d_H}$. In other words, $\mu$ is the self-similar measure on $F$ in the sense of \cite[Section 1.4]{kigami2001} associated with the weights $r_i^{d_H}$ on $I$. Let $(\Ecal,\Dcal)$ be the regular local Dirichlet form on $\Lcal^2(F,\mu)$ associated with this harmonic structure, as given by \cite[Theorem 3.4.6]{kigami2001}. This Dirichlet form is associated with a resistance metric $R$ on $F$, defined by
\[ R(x,y) = \left(\inf\{ \Ecal(f,f): f(x)=0, f(y)=1, f\in \Dcal\}\right)^{-1}, \]
which generates the original topology on $F$, by \cite[Theorem 3.3.4]{kigami2001}. Additionally, let
\begin{equation*}
\Dcal_0 = \{ f\in \Dcal: f|_{F^0}=0 \}.
\end{equation*}
Then by \cite[Corollary 3.4.7]{kigami2001}, $(\Ecal,\Dcal_0)$ is a regular local Dirichlet form on $\Lcal^2(F \setminus F^0,\mu)$.

By \cite[Chapter 4]{barlow1998}, associated with the Dirichlet form $(\Ecal,\Dcal)$ on $\Lcal^2(F,\mu)$ is a $\mu$-symmetric diffusion $X^N = (X^N_t)_{t \geq 0}$ which itself is associated with a $C_0$-semigroup of contractions $S^N = (S^N_t)_{t \geq 0}$. Let $L_N$ be the generator of this diffusion. Likewise associated with $(\Ecal,\Dcal_0)$ we have a $\mu$-symmetric diffusion $X^D$ with $C_0$-semigroup of contractions $S^D$ and generator $L_D$. The process $X^D$ is similar to $X^N$, except for the fact that it is absorbed at the points $F^0$, whereas $X^N$ is reflected. The letters $N$ and $D$ indicate \textit{Neumann} and \textit{Dirichlet} boundary conditions respectively. As a consequence of theory developed in \cite[Sections 1.3 and 1.4]{fukushima2010}, the operator $-L_N$ is the non-negative self-adjoint operator associated with the form $(\Ecal,\Dcal)$, in the sense that $\Dcal = \Dcal((-L_N)^{\frac{1}{2}})$ and
\begin{equation*}
\Ecal(f,g) = \langle (-L_N)^{\frac{1}{2}}f,(-L_N)^{\frac{1}{2}}g \rangle_\mu
\end{equation*}
for all $f,g \in \Dcal$. An analogous result holds with $-L_D$ and $(\Ecal,\Dcal_0)$. This justifies us calling $L_N$ the \textit{Neumann Laplacian} and $L_D$ the \textit{Dirichlet Laplacian}.
\begin{exmp}\label{interval}
Let $F = [0,1]$ and take \textit{any} $M \geq 2$. For $1 \leq i \leq M$ let $\psi_i: F \to F$ be the affine map such that $\psi_i(0) = \frac{i-1}{M}$, $\psi_i(1) = \frac{i}{M}$. It follows that $F^0 = \{0,1\}$. Let $r_i = M^{-1}$ for all $i \in I$ and let
\begin{equation*}
A_0 = \left( \begin{array}{cc}
-1 & 1\\
1 & -1
\end{array}
\right).
\end{equation*}
Then all the conditions given above are satisfied. We have $\Dcal = H^1[0,1]$ and $\Ecal(f,g) = \int_0^1 f'g'$. The associated generators $L_N$ and $L_D$ are respectively the standard Neumann and Dirichlet Laplacians on $[0,1]$. In particular, the induced resistance metric $R$ is none other than the standard Euclidean metric. This interpretation of the unit interval as a p.c.f.s.s. set that fits into our set-up will be useful to us later on.
\end{exmp}
The object of study in the present paper is the following SPDE on $F$:
\begin{equation}\label{SPDE}
\begin{split}
du(t) &= L_bu(t)dt + (1-L_b)^{-\frac{\alpha}{2}}dW(t),\\
u(0) &= u_0 \in \Lcal^2(F,\mu),
\end{split}
\end{equation}
where $b \in \{ N, D \}$ and $\alpha \in [0,\infty)$ are parameters and $W$ is a cylindrical Wiener process on $\Lcal^2(F,\mu)$. That is, $W$ formally satisfies
\begin{equation*}
\Ebb \left[ \langle f,W(s) \rangle_{\Lcal^2(F,\mu)}\langle W(t),g \rangle_{\Lcal^2(F,\mu)} \right] = (s \wedge t) \langle f,g \rangle_{\Lcal^2(F,\mu)}
\end{equation*}
for all $s,t \in [0,\infty)$ and $f,g \in \Lcal^2(F,\mu)$. Note that $W$ is not an $\Lcal^2(F,\mu)$-valued process; to be precise, it takes values in some separable Hilbert space in which $\Lcal^2(F,\mu)$ can be continuously embedded (see \cite{daprato2014}). The vast majority of results in this paper hold regardless of the value of $b$; whenever this is not the case it will be explicitly stated.

The SPDE \eqref{SPDE} in the case $\alpha = 0$ will be called the \textit{stochastic heat equation} or \textit{SHE} for $(A_0,\textbf{r})$ on $F$. It is well known (see for example \cite{walsh1986}) that the solution to the standard SHE on $[0,1]$ with initial condition $u_0 = 0$ is jointly continuous with H\"{o}lder exponents of essentially $\frac{1}{2}$ in space and essentially $\frac{1}{4}$ in time (the meaning of ``essentially'' is given in Definition \ref{essentially}). The following extension of this result is a simple consequence of our main result Theorems \ref{SPDEreg2} and \ref{thm:ctsver} and was the original motivation for the writing of the present paper:
\begin{thm}
Equip $F$ with the resistance metric $R$. Then for each $b \in \{ N,D \}$, the SHE for $(A_0,\textbf{r})$ on $F$ with
$u_0 = 0$ has a unique solution $u = (u(t,x))_{(t,x) \in [0,\infty) \times F}$ which is jointly continuous, essentially 
$\frac{1}{2}$-H\"{o}lder continuous in space (i.e. in $(F,R)$) and essentially $\frac{1}{2}(1- \frac{d_s}{2})$-H\"{o}lder 
continuous in time, where
\begin{equation*}
d_s = \frac{2d_H}{d_H + 1}
\end{equation*}
is the spectral dimension of $(F,R)$.
\end{thm}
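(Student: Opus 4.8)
The plan is to realise the solution as a stochastic convolution and then bootstrap from second-moment estimates to joint H\"older continuity via the fractal analogue of Kolmogorov's theorem; the statement is then the specialisation of the main regularity result (Theorem \ref{SPDEreg2}) and the continuous-version result (Theorem \ref{thm:ctsver}) to $\alpha = 0$, $u_0 = 0$. First I would recall from Section \ref{existsec} that, writing $\{\lambda_k\}$ and $\{\phi_k\}$ for the eigenvalues and eigenfunctions of $-L_b$, the Weyl-type asymptotics $\lambda_k \asymp k^{2/d_s}$ hold, so that $\sum_k \lambda_k^{-1} < \infty$ precisely because $d_s < 2$; this gives a well-defined, unique $\Lcal^2(F,\mu)$-valued centred Gaussian mild solution $u(t) = \int_0^t S^b_{t-s}\, dW(s)$. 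Using the heat kernel $p_t(x,y) = \sum_k e^{-\lambda_k t}\phi_k(x)\phi_k(y)$ together with the on-diagonal bound $p_t(x,x) \le C t^{-d_s/2}$ (finite-time integrable, again because $d_s < 2$), one checks that $\int_0^t\!\int_F p_{t-s}(x,y)^2\,\mu(dy)\,ds = \int_0^t p_{2(t-s)}(x,x)\,ds < \infty$, so the pointwise Walsh integral $u(t,x) := \int_0^t\!\int_F p_{t-s}(x,y)\,W(dy\,ds)$ is well defined for each $(t,x)$; this is the content of the main result of Section \ref{ptwise}.

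The heart of the matter is two moment estimates. For the spatial increment at fixed $t$, Chapman--Kolmogorov and symmetry give
\[
\Ebb\big[(u(t,x)-u(t,y))^2\big] = \int_0^t \big(p_{2s}(x,x) - 2 p_{2s}(x,y) + p_{2s}(y,y)\big)\,ds,
\]
and I would bound the right-hand side by $C_T\, R(x,y)$ by combining the Lipschitz continuity of the resolvent densities $\rho_\lambda$ with respect to $R$ proved in Section \ref{ptwise} — which controls the Laplace transform $\int_0^\infty e^{-\lambda s}(\cdots)\,ds$ of the (non-negative) integrand — with the on-diagonal decay of $p_s$ to absorb the large-$s$ tail. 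For the temporal increment at fixed $x$ and $0\le s<t\le T$, I would split $u(t,x)-u(s,x)$ into the contribution of $(s,t]$, whose variance equals $\int_0^{t-s} p_{2v}(x,x)\,dv \le C(t-s)^{1-d_s/2}$, and the contribution of $[0,s]$, whose variance equals $\int_0^s\big(p_{2(t-r)}(x,x) - 2p_{(t-r)+(s-r)}(x,x) + p_{2(s-r)}(x,x)\big)\,dr$; the latter is estimated using the gradient bound $|\partial_t p_t(x,x)| \le C t^{-d_s/2-1}$ (a termwise consequence of $u e^{-u} \le 2 e^{-u/2}$), splitting the integral at $r = s - (t-s)$, and again yields a bound of order $(t-s)^{1-d_s/2}$. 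Hence, via the triangle inequality, $\Ebb[(u(t,x)-u(s,y))^2] \le C_T\big(R(x,y) + |t-s|^{1-d_s/2}\big)$.

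Since $u$ is Gaussian with zero mean, each second-moment bound upgrades to $\Ebb[|u(t,x)-u(s,y)|^p] \le C_{p,T}\big(R(x,y)^{p/2} + |t-s|^{p(1-d_s/2)/2}\big)$ for every $p \ge 2$. Feeding this into the Kolmogorov-type continuity theorem for processes indexed by $[0,1]\times F$ from Section \ref{Kolmogorov} — whose hypotheses involve the metric-measure structure of $(F,R)$ and in particular cost $O(1/p)$ in the attainable H\"older exponent — and taking $p$ arbitrarily large produces a jointly continuous modification that is $\gamma$-H\"older in space for every $\gamma < \tfrac12$ and $\gamma'$-H\"older in time for every $\gamma' < \tfrac12(1-\tfrac{d_s}{2})$, i.e.\ essentially $\tfrac12$- and essentially $\tfrac12(1-\tfrac{d_s}{2})$-H\"older in the sense of Definition \ref{essentially}; uniqueness of this continuous version follows from uniqueness of the $\Lcal^2$-valued mild solution. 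Substituting $d_s = \tfrac{2d_H}{d_H+1}$ gives the stated form, and $d_H = 1$ for the interval of Example \ref{interval} recovers the classical exponents $\tfrac12$ and $\tfrac14$.

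The main obstacle is the spatial estimate: obtaining the clean bound $\Ebb[(u(t,x)-u(t,y))^2] \le C_T R(x,y)$, rather than a weaker power of $R$, is exactly what pins the spatial H\"older exponent to $\tfrac12$, and this is where the Lipschitz regularity of the resolvent densities in the resistance metric — the technical core of Section \ref{ptwise} — is indispensable. A secondary difficulty is formulating the Kolmogorov theorem on $F$ with good enough control of the covering/volume structure of $(F,R)$ that nothing worse than the $\varepsilon$ hidden in ``essentially'' is lost, and checking that the pointwise evaluations $u(t,x)$ are genuinely well defined (not merely formal), which is what makes the notion of continuity meaningful in the first place.
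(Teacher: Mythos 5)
Your proposal reproduces the paper's architecture faithfully (existence and spectral input from Section \ref{existsec}, Lipschitz resolvent from Section \ref{ptwise}, the Kolmogorov-type machinery of Section \ref{Kolmogorov}, and finally the moment estimates fed into Theorem \ref{cty3}/Corollary \ref{cty4}), and your spatial estimate is essentially identical to the paper's: the quantity $\int_0^t\bigl(p_{2s}(x,x)-2p_{2s}(x,y)+p_{2s}(y,y)\bigr)ds$ is non-negative term by term, so inserting $e^{2t}e^{-2s}$ bounds it by $\tfrac{e^{2t}}{2}\bigl(\rho^b_1(x,x)-2\rho^b_1(x,y)+\rho^b_1(y,y)\bigr)$, and Proposition \ref{resolvLip} then gives the Lipschitz bound in $R$; this is precisely Lemma \ref{resolvestim} followed by Proposition \ref{spaceestim2}.

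Where you genuinely diverge from the paper is in the pointwise interpretation and, more substantially, in the temporal estimate. You work throughout with the heat kernel $p_t(x,y)$ in the Walsh picture and invoke the on-diagonal bound $p_t(x,x)\le Ct^{-d_s/2}$ (both to make sense of $u(t,x)$ as a Walsh integral and, via $\lambda e^{-\lambda t}\le Ct^{-1}e^{-\lambda t/2}$, to get $|\partial_t p_t(x,x)|\le Ct^{-1-d_s/2}$ for your ``near/far'' split of the $[0,s]$ contribution). This is a valid and quite classical route, but note that $p_t(x,x)\le Ct^{-d_s/2}$ does \emph{not} follow from the spectral data the paper actually cites — the eigenvalue asymptotics $\lambda_k\asymp k^{2/d_s}$ together with the sup-norm bound $\|\phi_k\|_\infty\le c\lambda_k^{d_s/4}$ only yield $p_t(x,x)\lesssim t^{-d_s}$, which is too weak. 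The on-diagonal heat kernel estimate is a genuine extra input (a well-known one for p.c.f. fractals, but logically prior to, not a consequence of, the eigenfunction bound). The paper instead deliberately avoids heat kernel estimates: it interprets $u(t,x)$ via the eigenfunction series together with the averaging functionals $f^x_n=\mu(D^0_n(x))^{-1}\1bb_{D^0_n(x)}$ (Theorem \ref{ptreg}), and its temporal estimate (Proposition \ref{timeestim2}) first reduces to $s=0$ using the Ornstein--Uhlenbeck increment inequality $\Ebb[(X^{b,k}_s-X^{b,k}_{s+t})^2]\le 2\Ebb[(X^{b,k}_t)^2]$, then bounds $\Ebb[u(t,x)^2]\le 2t\|f^x_n\|_\mu^2+C2^{-n}$ and optimises over $n$, yielding $t^{1/(d_H+1)}=t^{1-d_s/2}$ directly from the volume-scaling $\|f^x_n\|_\mu^2\asymp 2^{d_Hn}$ and the approximation rate $2^{-n}$. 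Your route buys a more familiar PDE-style argument at the cost of invoking an external heat kernel bound; the paper's route is more self-contained given the resolvent Lipschitz estimate. Both are correct. One minor point: the uniqueness clause at the end is slightly quicker than you state — it rests on separability of $[0,T]\times F$ so that two continuous versions agreeing a.s.\ at each point coincide a.s.\ as functions, combined with the identification of the continuous version with the $\Lcal^2$-valued solution (Theorem \ref{thm:ctsver}).
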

Note that many p.c.f.s.s. sets $F$ can be embedded into Euclidean space in such a way that $R$ is equivalent to the Euclidean metric up to some exponent. Therefore, for such sets, we can also make sense of the above result with 
respect to a spatial Euclidean metric, see Remark \ref{euclidequiv}.
\begin{exmp}\label{examples}
\begin{enumerate}
\item (Interval.) Take $F = [0,1]$ with the Dirichlet form given in Example \ref{interval}. Then $d_s = 1$ and the resistance metric is the Euclidean metric, so using the above theorem we obtain the usual well-known H\"{o}lder exponents for the SHE on $[0,1]$.
\item ($n$-dimensional Sierpinski gasket.) See \cite[Example 3.1.5]{kigami2001} and \cite[Section 3]{hu2006}. The standard harmonic structure on the $n$-dimensional Sierpinski gasket (for $n \geq 2$) fits into our set-up; it is given by $M = n+1$,
\begin{equation*}
A_0 = \left( \begin{array}{ccccc}
-n & 1 & 1 & \cdots & 1\\
1 & -n & 1 & \cdots & 1\\
1 & 1 & -n & \cdots & 1\\
\vdots &\vdots &\vdots &\ddots &\vdots\\
1 & 1 & 1 & \cdots & -n
\end{array}
\right),
\end{equation*}
and $r_i = \frac{n+1}{n+3}$ for all $i \in I$. In fact for $n=1$ we have the binary decomposition of the unit interval and recover the usual case. For
$n=2$ the diffusion $X^N$ is known as Brownian motion on the Sierpinski gasket and is ubiquitous in the field of analysis 
on fractals (\cite{goldstein1987}, \cite{kusuoka1987}, \cite{barlow1988}). 
We can compute $d_H = \frac{\log(n+1)}{\log(n+3) - \log(n+1)}$ and $d_s = \frac{2\log(n+1)}
{\log(n+3)}$. This gives us a family of examples which live naturally in $\Rbb^n$ for any geometric dimension $n$ and 
where the spectral dimension can be made arbitrarily close to 2 by taking $n$ large. 
Using the properties of the resistance metric we can have solutions that have arbitrarily small spatial (with respect to the Euclidean metric) and temporal H\"{o}lder exponents. See Remark \ref{euclidequiv} for further discussion.
\end{enumerate}
\end{exmp}

\section{Existence (and uniqueness)}\label{existsec}
\begin{defn}\label{mild}
Henceforth we let $\Hcal = \Lcal^2(F,\mu)$. Denote the inner product on $\Hcal$ by $\langle \cdot,\cdot \rangle_\mu$. Let $T>0$. Following \cite{daprato2014}, an $\Hcal$-valued predictable process $u = (u(t): t \in [0,T])$ is a \textit{(mild) solution} to \eqref{SPDE} if
\begin{equation*}
u(t) = S^b_tu_0 + \int_0^t S^b_{t-s}(1-L_b)^{-\frac{\alpha}{2}}dW(s)
\end{equation*}
almost surely for every $t \in [0,T]$. We write $u: [0,T] \to \Hcal$, where we suppress the dependence of $u$ on the underlying probability space. If $T = \infty$ we call the solution \textit{global}.
\end{defn}
\begin{rem}
Global solutions to \eqref{SPDE} are unique up to versions by definition.
\end{rem}
Notice that for any $f \in \Hcal$, $u$ is a solution to \eqref{SPDE} with $u_0 = 0$ if and only if $u + S^bf$ is a solution to \eqref{SPDE} with $u_0 = f$. Thus we can safely assume that $u_0 = 0$, and so we are interested in the properties of the stochastic convolution
\begin{equation}\label{mild0}
W^b_\alpha(t) := \int_0^t S^b_{t-s}(1-L_b)^{-\frac{\alpha}{2}}dW(s).
\end{equation}
Observe that if a solution exists for $u_0 = 0$, then it must equal $W^b_\alpha$ up to versions.

The first thing to investigate is the validity of the operator $(1-L_b)^{-\frac{\alpha}{2}}$ in the case $\alpha > 0$. For an operator $\Acal$ on $\Hcal$, we denote the domain of $\Acal$ by $\Dcal(\Acal)$. If $\Acal$ is bounded then let $\Vert \Acal \Vert$ denote its operator norm. The following statements are immediate by standard operator theory (see \cite[Theorem VIII.5]{reed1981} and \cite[Theorem 12.31]{renardy2004}):
\begin{cor}
For $b \in \{N,D\}$ we have that
\begin{enumerate}
\item $S^b_t = \exp(tL_b)$ for $t \geq 0$,
\item $S^b$ can be extended to an analytic semigroup (which we will identify with $S^b$),
\item For $ \alpha \geq 0$, $(1-L_b)^{-\frac{\alpha}{2}}$ is a bounded linear operator.
\end{enumerate}
\end{cor}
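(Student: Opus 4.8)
The plan is to derive all three statements from the spectral theorem applied to $-L_b$. Recall from the discussion preceding the corollary that $-L_b$ is a non-negative self-adjoint operator on $\Hcal$, so the spectral theorem furnishes a projection-valued measure $E = E^b$ supported on $[0,\infty)$ with $-L_b = \int_{[0,\infty)} \lambda \, dE_\lambda$, together with a bounded Borel functional calculus satisfying $\Vert \phi(-L_b) \Vert \le \sup_{\lambda \ge 0}|\phi(\lambda)|$ for every bounded Borel $\phi \colon [0,\infty) \to \mathbb{C}$; this is the content of \cite[Theorem VIII.5]{reed1981}. For (1), I would set $T_t := \int_{[0,\infty)} e^{-t\lambda}\, dE_\lambda$ for $t \ge 0$ and check directly from the functional calculus that $(T_t)_{t\ge0}$ is a semigroup of self-adjoint contractions, that $t \mapsto T_t f$ is continuous for each $f \in \Hcal$ (dominated convergence against the finite measure $\langle E_{(\cdot)}f,f\rangle_\mu$), and that its generator is $\int_{[0,\infty)} (-\lambda)\, dE_\lambda = L_b$; uniqueness of the $C_0$-semigroup with a given generator then forces $S^b_t = T_t$, which is what we denote by $\exp(tL_b)$.

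For (2), for complex $z$ with $\operatorname{Re} z \ge 0$ I would define $S^b_z := \int_{[0,\infty)} e^{-z\lambda}\, dE_\lambda$; since $|e^{-z\lambda}| = e^{-\lambda\operatorname{Re} z} \le 1$ this is again a contraction agreeing with $S^b_t$ when $z = t \ge 0$, and holomorphy on the open right half-plane follows by differentiating under the spectral integral, the relevant difference quotients converging because $\lambda \mapsto \lambda e^{-\lambda \operatorname{Re} z}$ is bounded for $\operatorname{Re} z > 0$. Thus $S^b$ extends to an analytic semigroup (of angle $\pi/2$), as in \cite[Theorem 12.31]{renardy2004}. For (3), since $1 - L_b = \int_{[0,\infty)}(1+\lambda)\, dE_\lambda$ has spectrum contained in $[1,\infty)$, the function $\lambda \mapsto (1+\lambda)^{-\frac{\alpha}{2}}$ is Borel and bounded by $1$ on $[0,\infty)$ for every $\alpha \ge 0$, so $(1-L_b)^{-\frac{\alpha}{2}} := \int_{[0,\infty)}(1+\lambda)^{-\frac{\alpha}{2}}\, dE_\lambda$ is a bounded linear operator with operator norm at most $1$.

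The only point requiring any care is the identification in (1) of the functional-calculus semigroup $T_t$ with the Dirichlet-form semigroup $S^b_t$: one needs that the generator of $(T_t)_{t \ge 0}$ is exactly $L_b$, which in turn rests on the characterisation of $-L_b$ as the non-negative self-adjoint operator attached to $(\Ecal,\Dcal)$ (resp. $(\Ecal,\Dcal_0)$) recalled above, combined with the uniqueness clause of the Hille--Yosida theorem. Once this is in place, (2) and (3) are purely a matter of applying the norm bound $\Vert \phi(-L_b)\Vert \le \Vert\phi\Vert_\infty$ to the explicit bounded functions $\phi(\lambda) = e^{-z\lambda}$ and $\phi(\lambda) = (1+\lambda)^{-\frac{\alpha}{2}}$, so I do not expect a substantive obstacle beyond this bookkeeping.
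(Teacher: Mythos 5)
Your proposal is correct and takes essentially the same route the paper intends: the paper simply cites the spectral theorem (\cite[Theorem VIII.5]{reed1981}) and the standard generation theory for analytic semigroups (\cite[Theorem 12.31]{renardy2004}), and your argument is exactly the spectral-calculus derivation that those references provide, spelled out. The one point you rightly flag — that the functional-calculus semigroup has generator $L_b$ and so coincides with $S^b$ by uniqueness in Hille--Yosida — is precisely the link supplied by the paper's earlier identification of $-L_b$ as the non-negative self-adjoint operator attached to $(\Ecal,\Dcal)$ (resp.\ $(\Ecal,\Dcal_0)$).
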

\begin{rem}
The operator $(1-L_b)^{-\frac{\alpha}{2}}$ is known as a \textit{Bessel potential}, see \cite{issoglio2015}, \cite{strichartz2003}.
\end{rem}

\subsection{Spectral theory of Laplacians}
Now that we have established the close relationship between $L_b$ and $\Ecal$, we may make use of the spectral theory of these Laplacians developed in \cite[Chapters 4 and 5]{kigami2001}. We summarise the useful definitions and results below:
\begin{defn}
The unique real $d_H > 0$ such that
\begin{equation*}
\sum_{i \in I} r_i^{d_H} = 1
\end{equation*}
is the \textit{Hausdorff dimension} of $(F,R)$, see \cite[Theorem 1.5.7 and Theorem 4.2.1]{kigami2001}. The \textit{spectral dimension} of $(F,R)$ is given by
\begin{equation*}
d_s = \frac{2d_H}{d_H + 1}.
\end{equation*}
See \cite[Theorem 4.1.5 and Theorem 4.2.1]{kigami2001}.
\end{defn}
\begin{rem}
\begin{enumerate}
\item The definition of $d_s$ given in \cite{kigami2001} is far more general, but the definition above is equivalent for our purposes. We immediately see that $d_s \in (0,2)$ a priori. Were the harmonic structure $(A_0,\textbf{r})$ not regular, it would be possible to have $d_s \geq 2$ via its more general definition.
\item It is possible to show that $d_H \geq 1$. Indeed, by \cite[Theorem 1.6.2 and Lemma 3.3.5]{kigami2001} we have that
\begin{equation*}
\max_{x,y \in F^0} R(x,y) \leq \sum_{i \in I} \max_{x,y \in F^0} R(F_i(x),F_i(y)) \leq \left( \sum_{i \in I} r_i \right) \max_{x,y \in F^0} R(x,y),
\end{equation*}
so that $\sum_{i \in I} r_i \geq 1 = \sum_{i \in I} r_i^{d_H}$ and thus $d_H \geq 1$. It follows that $d_s \in [1,2)$.
\end{enumerate}
\end{rem}

\begin{prop}\label{spectra}
For $b \in \{ N,D \}$ the following statements hold: 

There exists a complete orthonormal basis $(\phi^b_k)_{k=1}^\infty$ of $\Hcal$ consisting of eigenfunctions of the operator $-L_b$. The corresponding eigenvalues $(\lambda^b_k)_{k=1}^\infty$ are non-negative and $\lim_{k \to \infty}\lambda^b_k = \infty$. We assume that they are given in ascending order:
\begin{equation*}
0 \leq \lambda^b_1 \leq \lambda^b_2 \leq \cdots.
\end{equation*}
There exist constants $c_1,c_2,c_3 > 0$ such that if $k \geq 2$ then 
\begin{equation*}
c_1k^\frac{2}{d_s} \leq \lambda^b_k \leq c_2k^\frac{2}{d_s}
\end{equation*}
and
\begin{equation*}
\Vert \phi^b_k \Vert_\infty \leq c_3 |\lambda^b_k|^\frac{d_s}{4}.
\end{equation*}
\end{prop}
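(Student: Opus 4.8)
The plan is to deduce everything from the classical results of Kigami on the spectral asymptotics of Laplacians on p.c.f.s.s. sets together with the Dirichlet-form/resistance-metric machinery. First I would establish the spectral decomposition itself: since $(\Ecal,\Dcal)$ (resp. $(\Ecal,\Dcal_0)$) is a regular local Dirichlet form on the compact space $F$, and since bounded subsets of $\Dcal$ are precompact in $\Hcal$ (this is the Poincaré/compact-embedding property that holds because $(F,R)$ is compact and the form has a spectral gap on the orthogonal complement of constants), the self-adjoint operator $-L_b$ has compact resolvent. Hence $\Hcal$ admits a complete orthonormal basis of eigenfunctions $(\phi^b_k)$ with eigenvalues $0\le\lambda^b_1\le\lambda^b_2\le\cdots\to\infty$; non-negativity is immediate from $\lambda^b_k = \Ecal(\phi^b_k,\phi^b_k)\ge 0$. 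This is essentially \cite[Theorem 4.1.5]{kigami2001} applied to our setting.

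Next I would prove the two-sided eigenvalue bound $c_1 k^{2/d_s}\le\lambda^b_k\le c_2 k^{2/d_s}$. The natural route is via Weyl-type asymptotics for the eigenvalue counting function $\rho^b(x) := \#\{k:\lambda^b_k\le x\}$. Kigami's theory (\cite[Theorem 4.1.5, Theorem 4.2.1]{kigami2001}) gives, under our standing assumptions (regular irreducible harmonic structure, self-similar measure $\mu$ with weights $r_i^{d_H}$), that there exist constants $0<c\le c'<\infty$ with $c x^{d_s/2}\le\rho^b(x)\le c' x^{d_s/2}$ for all $x\ge 1$ — indeed this is precisely where the identity $d_s = 2d_H/(d_H+1)$ comes from, since the measure-scaling exponent $d_H$ and resistance-scaling exponent $1$ combine through the renewal equation $\sum_i (r_i^{d_H}\cdot r_i)^{d_s/2}$-type relation. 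Inverting $\rho^b(\lambda^b_k)=k$ (up to the usual off-by-a-bounded-amount in the counting function, which is why we need $k\ge 2$) yields the claimed polynomial growth of $\lambda^b_k$ in $k$. I would cite Kigami directly for the counting-function bound rather than reproving the renewal-theorem argument.

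For the sup-norm estimate $\Vert\phi^b_k\Vert_\infty\le c_3|\lambda^b_k|^{d_s/4}$, the key input is the existence and continuity of the heat kernel, or equivalently a uniform Sobolev/Nash-type embedding. Concretely, for $f\in\Dcal$ one has a bound of the form $\Vert f\Vert_\infty^2 \le C(\Ecal(f,f)+\Vert f\Vert_\mu^2)$ coming from the fact that functions in $\Dcal$ are continuous (even $1/2$-Hölder in $R$, as the resistance-metric estimate $|f(x)-f(y)|^2\le R(x,y)\Ecal(f,f)$ shows) together with compactness of $F$ — but this only gives a $\lambda$-power growth with exponent $1/2$, not $d_s/4$, so more is needed. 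The sharp exponent $d_s/4$ should instead be extracted from the on-diagonal heat kernel bound $p^b_t(x,x)\le C t^{-d_s/2}$, valid for $t\in(0,1]$ by the sub-Gaussian heat kernel estimates on p.c.f.s.s. sets (\cite{barlow1998}, \cite{kigami2001}); applying this to the spectral expansion $p^b_t(x,x)=\sum_k e^{-\lambda^b_k t}\phi^b_k(x)^2$ and choosing $t = 1/\lambda^b_k$ gives $\phi^b_k(x)^2 e^{-1} \le \sum_j e^{-\lambda^b_j/\lambda^b_k}\phi^b_j(x)^2 = p^b_{1/\lambda^b_k}(x,x)\le C(\lambda^b_k)^{d_s/2}$, hence $\Vert\phi^b_k\Vert_\infty\le c_3(\lambda^b_k)^{d_s/4}$ (again restricting to $k\ge 2$ so that $\lambda^b_k>0$ and $1/\lambda^b_k\le 1$).

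I expect the main obstacle to be citing the right form of the heat kernel / counting function estimates that is valid in the full generality of regular irreducible harmonic structures on arbitrary connected p.c.f.s.s. sets — the literature (Barlow, Kigami, Fukushima–Shima) often states these for specific examples like the gasket or under slightly different hypotheses, so care is needed to confirm that Kigami's general localized-eigenvalue-counting theorem \cite[Theorem 4.1.5]{kigami2001} and the associated heat-kernel upper bounds apply verbatim here. Once the correct references are pinned down, the deduction of the stated inequalities is routine.
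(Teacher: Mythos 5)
Your proposal is correct and takes essentially the same route as the paper, whose proof is a bare one-line citation to Kigami's Theorem 4.5.4 (counting-function asymptotics, which give the two-sided bound on $\lambda^b_k$ after inversion) and Lemma 5.1.3 (the $L^\infty$ eigenfunction bound); your heat-kernel-diagonal argument with $t=1/\lambda^b_k$ is precisely the mechanism behind that lemma. The only caveat worth flagging is that invoking full sub-Gaussian off-diagonal heat kernel estimates is more than is needed or available in this generality --- only the on-diagonal upper bound $p^b_t(x,x)\le Ct^{-d_s/2}$ is required, and that follows from the counting-function asymptotics via a Nash-type inequality under the standing assumption of a regular harmonic structure, without any further hypotheses.
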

\begin{proof}
This is a simple corollary of results in \cite[Chapters 4, 5]{kigami2001}, in particular Theorem 4.5.4 and Lemma 5.1.3.
\end{proof}
\begin{rem}
Note that all functions $f \in \Dcal$ must be at least $\frac{1}{2}$-H\"{o}lder continuous with respect to the resistance metric since
\begin{equation*}
|f(x) - f(y)|^2 \leq \Ecal(f,f)R(x,y)
\end{equation*}
for all $x,y \in F$ (see \cite[Proposition 7.18]{barlow1998}). Thus it makes sense to consider $\phi^b_k(x)$ for $x \in F$. The above proposition then implies that $|\phi^b_k(x)| \leq c_3 |\lambda^b_k|^\frac{d_s}{4}$ for all $x \in F$, $k \geq 2$.
\end{rem}
\begin{rem}\label{consteval}
The reason why we require $k \geq 2$ in the above proposition is that we may have $\lambda^b_1 = 0$. In this case it follows that $\Ecal(\phi^b_1,\phi^b_1) = 0$. By the properties of the resistance metric $R$, for any distinct $x_1,x_2 \in F$ we have that
\begin{equation*}
\frac{|\phi^b_1(x_1) - \phi^b_1(x_2)|^2}{R(x_1,x_2)} \leq \Ecal(\phi^b_1,\phi^b_1) = 0.
\end{equation*}
It follows that $\phi^b_1$ is constant. Since $\Vert \phi^b_1 \Vert_\mu = 1$ and $\mu$ is a probability measure we conclude that $\phi^b_1 \equiv 1$. This confirms that if $0$ is an eigenvalue it must necessarily have multiplicity $1$, so we always have $\lambda^b_2 > 0$. It also implies that we have $\lambda^b_1 = 0$ if and only if $b = N$, since the non-zero constant functions are elements of $\Dcal \setminus \Dcal_0$. In the case that $\lambda^b_1 > 0$, we will assume that $c_1,c_2,c_3$ are chosen such that the estimates in the above proposition hold for $k \geq 1$.
\end{rem}
The existence of a complete orthonormal basis of eigenfunctions of $L_b$ allows us to write down series representations of elements of $\Hcal$ and operators defined on subspaces of $\Hcal$ in a way analogous to the Fourier series representations of elements of $\Lcal^2(0,1)$. For example, an element $f \in \Hcal$ has a series representation
\begin{equation*}
f = \sum_{k=1}^\infty f_k \phi^b_k
\end{equation*}
where $f_k = \langle \phi^b_k,f \rangle_\mu$. Then for any map $\Xi: [0,\infty) \to \Rbb$ we have that the operator $\Xi(-L_b)$ has the representation
\begin{equation*}
\Xi(-L_b)f = \sum_{k=1}^\infty f_k \Xi(\lambda^b_k) \phi^b_k,
\end{equation*}
and the domain of $\Xi(-L_b)$ is exactly those $f \in \Hcal$ for which the above expression is in $\Hcal$. In particular
\begin{equation*}
S^b_tf = \sum_{k=1}^\infty f_k e^{-\lambda^b_kt} \phi^b_k
\end{equation*}
for all $f \in \Hcal$.
\subsection{Existence of solution}
Recall the expression \eqref{mild0}. If we can show that $W^b_\alpha(t) \in \Hcal$ almost surely for every $t > 0$, then we have a unique global solution of \eqref{SPDE} for $u_0 = 0$, and thus by the discussion after Definition \ref{mild} we have a unique global solution for \textit{any} initial value $u_0 \in \Hcal$. In fact we can do better than that:
\begin{defn}\label{essentially}
Let $(M_1,d_1)$ and $(M_2,d_2)$ be metric spaces, and let $f:M_1 \to M_2$ be continuous. For $\delta \in (0,1]$ we say that $f$ is \textit{essentially $\delta$-H\"{o}lder continuous} if it is $\gamma$-H\"{o}lder continuous for every $\gamma < \delta$. That is, for every $\gamma \in (0,\delta)$ there exists a constant $\epsilon_\gamma$ such that $d_2(f(x),f(y)) \leq \epsilon_\gamma d_1(x,y)^\gamma$ for all $x,y \in M_1$.
\end{defn}
\begin{thm}[Existence]\label{existence}
For every $\alpha \geq 0$, $b \in \{N,D\}$ and $T \geq 0$ we have that
\begin{equation*}
\Ebb \left[\Vert W^b_\alpha(T) \Vert_\mu^2 \right] < \infty.
\end{equation*}
In particular for any $\alpha \geq 0$, $b \in \{N,D\}$ and any initial condition $u_0 \in \Hcal$ there exists a unique (up to versions) global solution to \eqref{SPDE}. There exists an $\Hcal$-continuous version of this solution. Moreover if $u_0 = 0$ then this version is essentially $\frac{1}{2}\left( 1 \wedge (1 - \frac{d_s}{2} + \alpha) \right)$-H\"{o}lder continuous on compact intervals. 
\end{thm}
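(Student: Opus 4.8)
The plan is to pass to the spectral representation of Proposition~\ref{spectra} and reduce every claim to an estimate on a scalar series governed by the Weyl-type asymptotics $\lambda^b_k\asymp k^{2/d_s}$. Throughout I write $h=t-s$ for $0\le s<t$, and adopt the convention that a term of the form $(1-e^{-a\lambda})/\lambda$ is read as its limit $a$ when $\lambda=0$ (by Remark~\ref{consteval} this only concerns $k=1$ in the Neumann case).

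First I would treat the second moment at a fixed time. The integrand $S^b_{t-s}(1-L_b)^{-\alpha/2}$ is deterministic and diagonal in the basis $(\phi^b_k)$, sending $\phi^b_k$ to $e^{-\lambda^b_k(t-s)}(1+\lambda^b_k)^{-\alpha/2}\phi^b_k$, so the It\^o isometry for stochastic convolutions (\cite{daprato2014}) gives
\begin{equation*}
\Ebb\big[\Vert W^b_\alpha(T)\Vert_\mu^2\big]=\int_0^T\big\Vert S^b_{T-r}(1-L_b)^{-\frac\alpha2}\big\Vert_{\HS}^2\,dr=\sum_{k\ge 1}(1+\lambda^b_k)^{-\alpha}\,\frac{1-e^{-2\lambda^b_k T}}{2\lambda^b_k}.
\end{equation*}
By Proposition~\ref{spectra} the general term is comparable to $k^{-(2\alpha+2)/d_s}$, and since $\alpha\ge 0$ and $d_s<2$ we have $(2\alpha+2)/d_s>1$; hence the series converges, which is the first assertion. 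In particular $\int_0^t\Vert S^b_{t-r}(1-L_b)^{-\alpha/2}\Vert_{\HS}^2\,dr<\infty$ for every $t$, so by the standard theory of \cite{daprato2014} the stochastic convolution $W^b_\alpha$ is a well-defined predictable $\Hcal$-valued process and the unique (up to versions) mild solution for $u_0=0$; for general $u_0\in\Hcal$ one adds the deterministic term $S^b_tu_0$, which is $\Hcal$-continuous by strong continuity of the semigroup, and invokes the discussion following Definition~\ref{mild}.

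Next, assuming $u_0=0$, I would estimate increments. Writing
\begin{equation*}
\begin{split}
W^b_\alpha(t)-W^b_\alpha(s)={}&\int_0^s(S^b_{t-r}-S^b_{s-r})(1-L_b)^{-\frac\alpha2}\,dW(r)\\
&+\int_s^t S^b_{t-r}(1-L_b)^{-\frac\alpha2}\,dW(r)
\end{split}
\end{equation*}
and applying $\Vert a+b\Vert_\mu^2\le 2\Vert a\Vert_\mu^2+2\Vert b\Vert_\mu^2$ with the It\^o isometry on each piece yields
\begin{equation*}
\Ebb\big[\Vert W^b_\alpha(t)-W^b_\alpha(s)\Vert_\mu^2\big]\le 2\sum_{k\ge 1}(1+\lambda^b_k)^{-\alpha}\bigg[\frac{(1-e^{-\lambda^b_k h})^2(1-e^{-2\lambda^b_k s})}{2\lambda^b_k}+\frac{1-e^{-2\lambda^b_k h}}{2\lambda^b_k}\bigg].
\end{equation*}
Fixing $\gamma<a_0:=\tfrac12\big(1\wedge(1-\tfrac{d_s}2+\alpha)\big)$, for the first bracketed term I would use $1-e^{-2\lambda^b_k s}\le 1$ and $(1-e^{-\lambda^b_k h})^2\le(\lambda^b_k h)^{2\gamma}$ (valid since $\gamma\le 1$), and for the second $1-e^{-2\lambda^b_k h}\le(2\lambda^b_k h)^{2\gamma}$ (valid since $\gamma\le\tfrac12$); for $k\ge 2$ both then contribute a constant multiple of $h^{2\gamma}\sum_{k\ge 2}(1+\lambda^b_k)^{-\alpha}(\lambda^b_k)^{2\gamma-1}$, a series which by Proposition~\ref{spectra} converges precisely because $2\gamma<1-\tfrac{d_s}2+\alpha$, while the $k=1$ term is computed directly and is $O(h)$. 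Hence $\Ebb[\Vert W^b_\alpha(t)-W^b_\alpha(s)\Vert_\mu^2]\le C_\gamma|t-s|^{2\gamma}$ uniformly for $s,t$ in any fixed compact interval. Since $W^b_\alpha(t)-W^b_\alpha(s)$ is a centred Gaussian vector in $\Hcal$, all even moments follow, $\Ebb[\Vert W^b_\alpha(t)-W^b_\alpha(s)\Vert_\mu^{2p}]\le C_{p,\gamma}|t-s|^{2\gamma p}$; choosing $p$ with $2\gamma p>1$ and invoking the Kolmogorov continuity theorem for $\Hcal$-valued processes yields a continuous version that is $\beta$-H\"older on compact intervals for every $\beta<\gamma-\tfrac1{2p}$, and letting $p\to\infty$, $\gamma\uparrow a_0$, and using uniqueness of versions gives one version which is essentially $a_0$-H\"older on compact intervals. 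Adding back $S^b_tu_0$ gives the $\Hcal$-continuous version for general $u_0$.

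The main obstacle I anticipate is not any single hard estimate but the optimal choice of the interpolation exponent $\gamma$ in the two pieces of the increment, and checking that the binding constraint is exactly $a_0=\tfrac12\big(1\wedge(1-\tfrac{d_s}2+\alpha)\big)$: the ``free-propagation'' piece $\int_s^t$ only affords a single factor of $h$ through $1-e^{-2\lambda^b_k h}\le(2\lambda^b_k h)^{2\gamma}$ together with the constraint $\gamma\le\tfrac12$, and it is this that produces the $\tfrac12$ cap, the ``difference of semigroups'' piece being strictly less restrictive and never binding. The remaining care is purely bookkeeping: the separate (and harmless, $O(h)$) treatment of a vanishing $\lambda^b_1$ in the Neumann case, and the elementary reduction of the general-$u_0$ statements to the case $u_0=0$.
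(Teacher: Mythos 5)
Your argument is correct and reaches the stated conclusions, but it is genuinely different in method from the paper's. The paper proves the second-moment bound by factorising $(1-L_b)^{-\alpha/2}=(1-L_b)^{-\beta}\cdot(1-L_b)^{\beta-\alpha/2}$ with $\beta\in(\tfrac{d_s}{4},\tfrac12+\tfrac\alpha2)$, showing the first factor is Hilbert--Schmidt and the second, post-composed with $S^b_t$, has operator norm $O(1\vee t^{\alpha/2-\beta})$; for the H\"older statement it then verifies $\int_0^T t^{-2\gamma}\Vert(1-L_b)^{-\alpha/2}S^b_t\Vert_{\HS}^2\,dt<\infty$ for $\gamma<\tfrac12(1\wedge(1-\tfrac{d_s}{2}+\alpha))$ and invokes the factorisation-method results of \cite[Theorems 5.10, 5.13, 5.17]{hairer2009}. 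You instead diagonalise explicitly, compute the It\^o-isometry series in closed form, and prove H\"older continuity by decomposing the increment $W^b_\alpha(t)-W^b_\alpha(s)$ into the two orthogonal stochastic integrals, interpolating with $1-e^{-x}\le x^\gamma$ on each spectral mode, and then appealing to Gaussian moment equivalence plus the Banach-space Kolmogorov theorem. The two routes land on the same exponent because your constraint $\gamma\le\tfrac12$ from the $\int_s^t$ piece together with summability $2\gamma<1-\tfrac{d_s}{2}+\alpha$ is exactly the paper's constraint $\gamma<\tfrac12\wedge(\tfrac12+\tfrac\alpha2-\beta)$ with $\beta$ pushed to $\tfrac{d_s}{4}$. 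Your approach is more elementary and self-contained, and makes the origin of the H\"older exponent completely explicit; the paper's is shorter because it outsources the continuity of stochastic convolutions to an abstract theorem. Both are sound; your treatment of the $k=1$ Neumann mode and the reduction to $u_0=0$ are handled correctly, and the passage from a single $\gamma<a_0$ to an essentially $a_0$-H\"older version is the standard diagonal argument you gesture at.
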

\begin{proof}
We refer to the proof of \cite[Theorem 5.13]{hairer2009}. By It\={o}'s isometry for Hilbert spaces we have that
\begin{equation*}
\Ebb \left[\Vert W^b_\alpha(T) \Vert_\mu^2 \right] = \int_0^T \Vert(1-L_b)^{-\frac{\alpha}{2}} S^b_t \Vert_{\HS}^2 dt,
\end{equation*}
where $\Vert \cdot \Vert_{\HS}$ is the Hilbert-Schmidt norm. If there exists $\beta \in (0,\frac{1}{2} + \frac{\alpha}{2})$ such that $\Vert (1-L_b)^{-\beta} \Vert_{\HS} < \infty$, then by the spectral decomposition of $(1-L_b)^{\beta - \frac{\alpha}{2}} S^b_t$ we have that
\begin{equation}\label{existestim}
\begin{split}
\Vert(1-L_b)^{-\frac{\alpha}{2}} S^b_t \Vert_{\HS} &\leq \Vert (1-L_b)^{-\beta} \Vert_{\HS} \Vert (1-L_b)^{\beta -\frac{\alpha}{2}}S^b_t \Vert\\
&\leq C'(1 \vee t^{\frac{\alpha}{2} - \beta})
\end{split}
\end{equation}
for some constant $C' > 0$, and the last expression is square-integrable on the interval $[0,T]$. Therefore finding such a $\beta$ is sufficient for $W^b_\alpha(t)$ to be square-integrable. We see from Proposition \ref{spectra} that
\begin{equation*}
\begin{split}
\Vert (1-L_b)^{-\beta} \Vert_{\HS}^2 &= \sum_{k=1}^\infty \Vert (1-L_b)^{-\beta} \phi^b_k \Vert_\mu^2\\
&=\sum_{k=1}^\infty (1+ \lambda^b_k)^{-2\beta}\\
&\leq 1+ c_1 \sum_{k=1}^\infty k^{-\frac{4\beta}{d_s}}
\end{split}
\end{equation*}
and the final expression is finite for $\beta > \frac{d_s}{4}$. Since we know that $d_s < 2$ we can pick any $\beta \in (\frac{d_s}{4},\frac{1}{2} + \frac{\alpha}{2}) \neq \emptyset$ to show that $\Ebb \left[\Vert W^b_\alpha(t) \Vert_\mu^2 \right] < \infty$.

For the continuity results, it follows from \eqref{existestim} that for any positive $\gamma < \frac{1}{2}(1 \wedge (1 - \frac{d_s}{2} + \alpha))$ we have that
\begin{equation*}
\int_0^T t^{-2\gamma} \Vert (1-L_b)^{-\frac{\alpha}{2}} S^b_t \Vert_{\HS}^2 dt < \infty.
\end{equation*}
The continuity statements then directly follow from \cite[Theorems 5.10 and 5.17]{hairer2009}.
\end{proof}

\section{Some Kolmogorov-type continuity theorems}\label{Kolmogorov}
It is well-known that solutions to the one-dimensional stochastic heat equation are essentially $\frac{1}{4}$-H\"{o}lder continuous in time and essentially $\frac{1}{2}$-H\"{o}lder continuous in space, so we would like to prove analogous results for our SPDE. It will become clear that the natural ``spatial'' metric to use on $F$ is the resistance metric $R$.

The usual method of proving continuity of processes indexed by $\Rbb$ is to use Kolmogorov's continuity theorem. Our aim in this section is to prove versions of this theorem for the spaces $F$ and $[0,1] \times F$.

\subsection{Partitions and neighbourhoods}
We introduce some more theory and notation from \cite{kigami2001} and develop it further for our purposes.
\begin{defn}
If $n \geq 1$ and $w = w_1 \ldots w_n \in \Wbb_n$ then let
\begin{equation*}
\Sigma_w := \{ w' = w_1' w_2' \ldots \in \Wbb: w'_i = w_i \ \forall i \in \{ 1,\ldots,n\} \}.
\end{equation*}
If $n = 0$ and $w \in \Wbb_0$ then $w$ is the empty word and we set $\Sigma_w := \Wbb$.
\end{defn}
\begin{defn}
A finite subset $\Lambda \subseteq \Wbb_*$ is a \textit{partition} if $\Sigma_w \cap \Sigma_v = \emptyset$ for any $w \neq v \in \Lambda$ and $\Wbb = \bigcup_{w \in \Lambda} \Sigma_w$. A partition $\Lambda$ is a \textit{refinement} of a partition $\Lambda'$ if either $\Sigma_w \subseteq \Sigma_v$ or $\Sigma_w \cap \Sigma_v = \emptyset$ for any $(w,v) \in \Lambda \times \Lambda'$.
\end{defn}
\begin{defn}\label{partitions}
For $a \in (0,1)$ let
\begin{equation*}
\Lambda(a) = \{ w: w = w_1 \ldots w_m \in \Wbb_*,\ r_{w_1\ldots w_{m-1}} > a \geq r_w \}
\end{equation*}
which is a partition, see \cite[Definition 1.5.6]{kigami2001}. Notice that if $w \in \Lambda(a)$ then 
\begin{equation*}
r_{\min}a < r_w \leq a.
\end{equation*}
For $n \geq 1$ let $\Lambda_n = \Lambda(2^{-n})$. Let $\Lambda_0$ be the singleton containing the empty word; this is also a partition.
\end{defn}
\begin{lem}\label{refine}
If $n_1 \geq n_2 \geq 0$ then $\Lambda_{n_1}$ is a refinement of $\Lambda_{n_2}$.
\end{lem}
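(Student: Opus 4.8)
The plan is to prove the refinement property by first reducing to the case of consecutive indices and then arguing directly from the definitions. Since refinement is a transitive relation — if $\Lambda_{n_1}$ refines $\Lambda_{n_1-1}$, which refines $\Lambda_{n_1-2}$, and so on down to $\Lambda_{n_2}$, then $\Lambda_{n_1}$ refines $\Lambda_{n_2}$ — it suffices to show that $\Lambda_{n+1}$ is a refinement of $\Lambda_n$ for each $n\geq 0$. (One should check transitivity briefly: if $\Sigma_w\subseteq\Sigma_v$ or $\Sigma_w\cap\Sigma_v=\emptyset$ for all relevant pairs in two successive steps, then because the sets $\Sigma_v$ form a partition at the intermediate level, the same dichotomy propagates. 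The case $n_1=n_2$ is trivial, and the case involving $\Lambda_0$ is immediate since $\Sigma_w\subseteq\Wbb=\Sigma_{\emptyset}$ always.)

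For the consecutive case, fix $n\geq 1$ and take $w\in\Lambda_{n+1}$, $v\in\Lambda_n$. I want to show either $\Sigma_w\subseteq\Sigma_v$ or $\Sigma_w\cap\Sigma_v=\emptyset$. The key observation is that for words $w,v\in\Wbb_*$, the sets $\Sigma_w$ and $\Sigma_v$ are either nested or disjoint depending purely on whether one word is a prefix of the other: $\Sigma_w\subseteq\Sigma_v$ iff $v$ is a prefix of $w$, $\Sigma_v\subseteq\Sigma_w$ iff $w$ is a prefix of $v$, and otherwise $\Sigma_w\cap\Sigma_v=\emptyset$. So the claim reduces to showing that it is impossible for $v$ to be a strict prefix-extension situation going the wrong way — concretely, that we cannot have $w$ a strict prefix of $v$ with $w\neq v$. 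Suppose $w=w_1\ldots w_k$ is a prefix of $v=w_1\ldots w_k w_{k+1}\ldots w_\ell$ with $\ell>k$. By Definition \ref{partitions}, membership of $w$ in $\Lambda_{n+1}$ gives $r_w\leq 2^{-(n+1)}$, and membership of $v$ in $\Lambda_n$ gives $r_{v_1\ldots v_{\ell-1}}>2^{-n}$. But $w$ is a prefix of $v_1\ldots v_{\ell-1}$ (since $\ell-1\geq k$), so $r_{v_1\ldots v_{\ell-1}}=r_w\cdot\prod_{i=k+1}^{\ell-1}r_{v_i}\leq r_w\leq 2^{-(n+1)}<2^{-n}$, contradicting $r_{v_1\ldots v_{\ell-1}}>2^{-n}$. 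Hence $w$ cannot be a strict prefix of $v$; therefore either $v$ is a prefix of $w$ (giving $\Sigma_w\subseteq\Sigma_v$) or neither is a prefix of the other (giving $\Sigma_w\cap\Sigma_v=\emptyset$).

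The main subtlety — really the only place one has to be careful — is handling the relationship between the $r_w$ values and the partition structure when one word extends another: one needs that $r_{w'}\leq r_w$ whenever $w$ is a prefix of $w'$, which is immediate since the $r_i\in(0,1)$ and $r_{w'}=r_w\prod r_{w'_j}$ over the extra letters. With that in hand the prefix dichotomy does all the work. It is also worth noting for completeness that $\Lambda_{n+1}$ being a refinement of $\Lambda_n$ uses the fact that $2^{-(n+1)}\leq 2^{-n}$, i.e. that $\Lambda(a)$ refines $\Lambda(a')$ whenever $a\leq a'$; the argument above is exactly this monotonicity statement specialized to $a=2^{-(n+1)}$, $a'=2^{-n}$, and one could alternatively cite \cite[Section 1.5]{kigami2001} for the general principle. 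I do not expect any genuine obstacle here; the proof is a short unwinding of the definitions.
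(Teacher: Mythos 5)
Your proof is correct, and the core argument — rule out the case where the word from the finer partition is a strict prefix of the word from the coarser one by comparing $r$-values against the defining inequalities of $\Lambda(a)$ — is exactly the argument the paper uses. The only difference is that you first reduce to consecutive indices via transitivity of refinement; this is harmless but unnecessary, since your $r$-value comparison works verbatim with $n_1$ and $n_2$ in place of $n+1$ and $n$ (giving $2^{-n_2} < r_{v_1\cdots v_{\ell-1}} \leq r_w \leq 2^{-n_1} \leq 2^{-n_2}$), which is what the paper does directly.
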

\begin{proof}
Let $w \in \Lambda_{n_1}$, $v \in \Lambda_{n_2}$ with $\Sigma_w \cap \Sigma_v \neq \emptyset$. Then we must have either $\Sigma_w \subseteq \Sigma_v$ or $\Sigma_v \subseteq \Sigma_w$ (or both). Suppose it is not the case that $\Sigma_w \subseteq \Sigma_v$. Then there exist $m_2 > m_1 \geq 0$ such that $w \in \Wbb_{m_1}$ and $v \in \Wbb_{m_2}$, and $w_i = v_i$ for all $i \in \{1,\ldots,m_1\}$. In particular $v$ is not the empty word, so $w$ is not the empty word (since $n_1 \geq n_2$), so it follows that $m_2 \geq 2$ and $m_1 \geq 1$. But then $n_1,n_2 \geq 1$ so
\begin{equation*}
2^{-n_1} \geq r_w = r_{v_1\ldots v_{m_1}} \geq r_{v_1\ldots v_{m_2-1}} > 2^{-n_2}
\end{equation*}
which is a contradiction. So $\Sigma_w \subseteq \Sigma_v$.
\end{proof}
The above result in particular implies that if $n_1 \geq n_2 \geq 0$ and $v \in \Lambda_{n_1}$ then there exists a $w \in \Lambda_{n_2}$ such that $F_v \subseteq F_w$.
\begin{defn}
For $n \geq 0$ let $F^n_\Lambda = \bigcup_{w \in \Lambda_n} \psi_w(F^0)$. Obviously $F^n_\Lambda \subseteq F_*$ for all $n$. By Lemma \ref{refine} and \cite[Lemma 1.3.10]{kigami2001}, $(F^n_\Lambda)_{n \geq 0}$ is an increasing sequence of subsets.
\end{defn}
\begin{lem}\label{lambdasum}
$\bigcup_{n \geq 0} F^n_\Lambda = F_*$.
\end{lem}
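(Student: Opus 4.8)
The plan is to prove the two inclusions $\bigcup_{n \geq 0} F^n_\Lambda \subseteq F_*$ and $F_* \subseteq \bigcup_{n \geq 0} F^n_\Lambda$ separately. The first inclusion has essentially been noted already: for each $n$ we have $\Lambda_n \subseteq \Wbb_*$, so $F^n_\Lambda = \bigcup_{w \in \Lambda_n} \psi_w(F^0)$ is a finite union of sets of the form $\psi_w(F^0)$ with $w \in \Wbb_*$, and each such set is contained in $F^{|w|} \subseteq F_*$ by definition of $F_*$. Taking the union over $n$ gives $\bigcup_{n \geq 0} F^n_\Lambda \subseteq F_*$.

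For the reverse inclusion, I would fix $x \in F_*$, so $x \in F^m = \bigcup_{w \in \Wbb_m} \psi_w(F^0)$ for some $m \geq 0$; thus $x = \psi_v(p)$ for some $v \in \Wbb_m$ and some $p \in F^0$. The key observation is that $\Lambda_n$ for large $n$ consists of words that are ``finer'' than $v$: since $r_{\min}, r_{\max} \in (0,1)$, we have $r_v \geq r_{\min}^m > 0$, so we may choose $n$ large enough that $2^{-n} < r_v$; I would in fact want $2^{-n}$ small enough that \emph{every} word $w$ with $F_w \cap F_v \neq \emptyset$ and $r_w \leq 2^{-n}$ is actually an extension of $v$. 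More concretely, I would argue that for $n$ with $2^{-n} < r_v$, the word $v$ has a unique extension $w = v u \in \Lambda_n$ for each suitable suffix, but the cleanest route is: by the refinement property (Lemma \ref{refine} and the remark following it), for $n$ large the partition $\Lambda_n$ refines $\Lambda_0$, and more usefully one can show $v$ has an extension in $\Lambda_n$. Precisely, pick $u \in \Wbb$ with $v$ a prefix, so $vu|_k \in \Wbb_{m+k}$; since $r_{vu|_k} = r_v \prod_{i} r_{(u)_i} \to 0$ as $k \to \infty$, there is a unique $k$ with $r_{vu|_{k-1}} > 2^{-n} \geq r_{vu|_k}$, i.e. $w := vu|_k \in \Lambda_n$. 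Then $\psi_w(F^0) = \psi_v(\psi_{u|_k}(F^0)) \subseteq \psi_v(F^0)$ is not quite what I want — I need $x = \psi_v(p)$ itself to lie in $F^n_\Lambda$, so I should instead choose the suffix $u$ so that $p \in \psi_{u|_k}(F^0)$. This holds iff $p \in F^k = \bigcup_{w' \in \Wbb_k}\psi_{w'}(F^0)$, which is true for all $k \geq 0$ since $F^0 \subseteq F^k$ (as $(F^n)$ is increasing). So: given $x = \psi_v(p)$ with $p \in F^0$, pick any $u \in \Wbb$ extending $v$ such that $p = \psi_{u_{m+1}\cdots u_{m+k}}(q)$ for suitable $q \in F^0$ — e.g. take all the $u_i$ equal to a fixed index and use that $\psi_{u_{m+1}\cdots u_{m+k}}$ maps some point of $F^0$ to $p$ when $p$ is itself a fixed point of $\psi_{u_{m+1}}$; more robustly, just use $p \in F^k$. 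Then choose $n$ large enough that the $\Lambda_n$-ancestor $w$ of $vu|_k$ has length at least $m+k$, whence $x = \psi_v(p) = \psi_{vu|_k}(q) \in \psi_w(F^0)$ — wait, I need $w$ to be exactly $vu|_k$ or an extension; since $\Lambda_n$ for $n$ large contains an extension of any fixed word, and $\psi_w(F^0) \subseteq \psi_{vu|_k}(F^0) \ni x$ requires $w$ an extension of $vu|_k$, while $x \in \psi_w(F^0)$ needs $x = \psi_w(r)$ for some $r \in F^0$; using $\psi_w(F^0) \subseteq \psi_{vu|_k}(F^0)$ the other way is wrong. The correct statement: for $n$ large, $vu|_k$ has a \emph{descendant} $w \in \Lambda_n$, i.e. $F_w \subseteq F_{vu|_k}$, and I should choose the descendant passing through the point $x$; since $x \in F^j$ for all $j \geq m+k$ by the increasing property, I can find at each level a word whose image under $\psi$ of $F^0$ contains $x$, and taking $n$ large these stabilise into a word of $\Lambda_n$.

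Given the delicacy above, the cleanest writeup is probably: fix $x \in F_*$, so $x \in F^m$ for some $m$; then $x \in F^j$ for all $j \geq m$ since $(F^j)_{j\ge 0}$ is increasing; so for each $j \geq m$ there is $w^{(j)} \in \Wbb_j$ with $x \in \psi_{w^{(j)}}(F^0) \subseteq F_{w^{(j)}}$, and one can take these nested ($F_{w^{(j+1)}} \subseteq F_{w^{(j)}}$) by a compactness/König's-lemma argument, giving a point $\omega \in \Wbb$ with $\pi(\omega) = x$ and $x \in \psi_{\omega|_j}(F^0)$ for all $j \geq m$. Now for $n$ with $2^{-n} < r_{\omega|_m}$, the word $\omega|_m$ has an extension of the form $\omega|_\ell \in \Lambda_n$ (choosing $\ell$ to be the unique index with $r_{\omega|_{\ell-1}} > 2^{-n} \geq r_{\omega|_\ell}$, which satisfies $\ell \geq m$ since $r_{\omega|_m} > 2^{-n}$). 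Then $w := \omega|_\ell \in \Lambda_n$ and $x \in \psi_{w}(F^0) \subseteq F^n_\Lambda$. Hence $F_* \subseteq \bigcup_{n\geq 0} F^n_\Lambda$, completing the proof.

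The main obstacle is the bookkeeping in the reverse inclusion: one must produce, for a given point $x = \pi(\omega) \in F_*$, a word in some $\Lambda_n$ whose associated finite set $\psi_w(F^0)$ actually contains $x$, not merely whose cell $F_w$ contains $x$. This is handled by using that $(F^j)_{j \geq 0}$ is increasing together with the existence of a nested sequence of cells shrinking to $x$ — equivalently, choosing the infinite word $\omega$ so that $x$ lies on the boundary $\psi_{\omega|_j}(F^0)$ at every level past $m$ — after which the refinement structure of Lemma \ref{refine} lets us pick the right truncation $\omega|_\ell$ lying in $\Lambda_n$.
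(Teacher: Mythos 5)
Your forward inclusion is correct. The reverse inclusion, however, rests on the step ``one can take these nested $\ldots$ by a compactness/K\"onig's-lemma argument,'' and as written this is a gap, not a proof. For K\"onig's lemma to produce an infinite word $\omega$ with $x\in\psi_{\omega|_j}(F^0)$ for all $j\geq m$, the sets $T_j:=\{w\in\Wbb_j : x\in\psi_w(F^0)\}$ must be prefix-closed in the range $j\geq m$, i.e.\ every $w'\in T_{j+1}$ must have $w'|_j\in T_j$, so that the nonempty levels assemble into a finitely branching tree (or forest with finitely many roots at level $m$). This is not automatic: from $x\in\psi_{w'}(F^0)$ one only gets $x\in\psi_{w'|_j}(F^1)$, and $\psi_i(F^0)\subseteq F^0$ is generically false. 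Indeed for $j<m$ the closure genuinely fails: on $[0,1]$ with the binary decomposition, $1/4\in\psi_{11}(F^0)$ while $1/4\notin\psi_1(F^0)=\{0,1/2\}$. Prefix-closure does hold once $j\geq m$, but proving it requires the p.c.f.\ cell-intersection property $F_w\cap F_{\tilde w}\subseteq\psi_w(F^0)\cap\psi_{\tilde w}(F^0)$ for distinct $w,\tilde w\in\Wbb_j$ (\cite[Proposition 1.3.5(2)]{kigami2001}), combined with the existence of some $\tilde w\in T_j$ coming from $x\in F^j$: if $\tilde w=w'|_j$ use injectivity of $\psi_{\tilde w}$, and otherwise use the intersection property. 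This is exactly the content your compactness hand-wave is eliding, and without it the nested choice is unsupported.

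The paper sidesteps the whole backwards/K\"onig route by constructing $\omega$ greedily \emph{forwards} via the post-critical set: if $x=\psi_w(\pi(q))$ with $q\in P$, then shift-invariance of $P$ gives $x=\psi_{wq_1\cdots q_i}\bigl(\pi(\sigma^iq)\bigr)\in\psi_{wq_1\cdots q_i}(F^0)$ for every $i\geq 0$, so $\omega:=wq$ works immediately. Equivalently --- and this is essentially the ``just use $p\in F^k$'' idea you raised and then abandoned in your exploratory discussion --- one iterates the inclusion $F^0\subseteq F^1$: from $x=\psi_w(p)$ with $p\in F^0$, write $p=\psi_i(p')$ with $p'\in F^0$, pass to $x=\psi_{wi}(p')$, and repeat. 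This forward construction is shorter, needs nothing beyond the already-noted monotonicity of $(F^n)_{n\geq 0}$, and renders K\"onig's lemma and the prefix-closure fact unnecessary. I would recommend replacing your final paragraph with this direct construction.
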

\begin{proof}
Let $n \geq 0$ and $x \in F^n$. Recall the canonical continuous surjection $\pi:\Wbb \to F$ and the post-critical set $P$. By assumption $x \in F^n = \bigcup_{w \in \Wbb_n} \psi_w(F^0) = \bigcup_{w \in \Wbb_n} \psi_w(\pi(P))$, so there exists $w \in \Wbb_n$ and $v \in \Sigma_w$ such that $v_{n+1} v_{n+2}\ldots \in P$ and $\pi(v) = x$. By the definition of $P$ it follows that for all integer $i \geq 0$ we must have that $v_{n+i+1} v_{n+i+2}\ldots \in P$. Now consider the sequence $w^i := v_1\ldots v_{n+i} \in \Wbb_i$ for $i \geq 0$. It follows that $x \in \psi_{w^i}(F^0)$ for all $i \geq 0$. Also some $w^i$ must be in some $\Lambda_m$ for $m \geq 1$, since $\lim_{i \to \infty} r_{w^i} = 0$.
\end{proof}
\begin{defn}
For $n \geq 0$ and $x,y \in F^n_\Lambda$ let $x \sim_n y$ if there exists $w \in \Lambda_n$ such that $x,y \in F_w$. Then $(F^n_\Lambda,\sim_n)$ can be interpreted as a graph.
\end{defn}
\begin{lem}\label{partincrbd}
Suppose that $n \geq 0$, $w \in \Lambda_n$, $v \in \Lambda_{n+1}$ and $\Sigma_v \cap \Sigma_w \neq \emptyset$. If $w \in \Wbb_{m_1}$ and $v \in \Wbb_{m_2}$ then $ 0 \leq m_2 - m_1 < \frac{\log 2 + \log r_{\min}^{-1}}{\log r_{\max}^{-1}}$. In particular if $n_* := \left\lceil \frac{\log 2 + \log r_{\min}^{-1}}{\log r_{\max}^{-1}} \right\rceil$ then $\psi_v(F^0) \subseteq \psi_w(F^{n_*})$.
\end{lem}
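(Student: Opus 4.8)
The plan is to first observe that $v$ must be a word extension of $w$, then convert the defining inequalities of $\Lambda_n$ and $\Lambda_{n+1}$ into a bound on the difference of word lengths, and finally deduce the set containment by self-similarity.

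First I would invoke Lemma \ref{refine}: since $\Lambda_{n+1}$ is a refinement of $\Lambda_n$ and $\Sigma_v \cap \Sigma_w \neq \emptyset$, we must have $\Sigma_v \subseteq \Sigma_w$. By the definition of the cylinder sets this means that $v$ is obtained from $w$ by appending letters; in particular $m_2 \geq m_1$, so $m_2 - m_1 \geq 0$, and we may write $v = w v'$ with $v' := v_{m_1+1}\ldots v_{m_2} \in \Wbb_{m_2-m_1}$, whence $r_v = r_w \, r_{v'}$.

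Next, for the upper bound on $m_2 - m_1$ I would combine three facts: (i) $r_{v'} = \prod_{i=m_1+1}^{m_2} r_{v_i} \leq r_{\max}^{m_2-m_1}$; (ii) $w \in \Lambda_n$ gives $r_w \leq 2^{-n}$ (with the convention $r_\emptyset = 1 = 2^0$ taking care of $n = 0$); (iii) $v \in \Lambda_{n+1}$ gives $r_v > r_{\min} 2^{-(n+1)}$ by the inequality recorded in Definition \ref{partitions}. Dividing (iii) by (ii) yields $r_{v'} = r_v / r_w > r_{\min}/2$, so that $r_{\max}^{m_2-m_1} > r_{\min}/2$; taking logarithms and dividing through by $\log r_{\max} < 0$, which reverses the inequality, gives precisely $m_2 - m_1 < (\log 2 + \log r_{\min}^{-1})/\log r_{\max}^{-1}$.

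Finally, for the containment statement: since $m_2 - m_1$ is an integer strictly less than $(\log 2 + \log r_{\min}^{-1})/\log r_{\max}^{-1} \leq n_*$, we have $m_2 - m_1 \leq n_*$; and since $v = w v'$ with $v' \in \Wbb_{m_2-m_1}$, we obtain $\psi_v(F^0) = \psi_w(\psi_{v'}(F^0)) \subseteq \psi_w(F^{m_2-m_1}) \subseteq \psi_w(F^{n_*})$, the last inclusion because $(F^n)_{n \geq 0}$ is an increasing sequence of sets. I do not expect a genuine obstacle here; the only points needing care are keeping the direction of the inequality correct when dividing by $\log r_{\max}$, and checking that the edge case $n = 0$ (where $w$ is the empty word and $r_w = 1$) is consistent with the bound $r_w \leq 2^{-n}$, which it is.
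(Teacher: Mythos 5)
Your proof is correct and takes essentially the same route as the paper's: invoke the refinement property to get $\Sigma_v \subseteq \Sigma_w$, combine the bounds $r_w \leq 2^{-n}$ and $r_v > r_{\min}2^{-(n+1)}$ from Definition~\ref{partitions} with $r_{v'} \leq r_{\max}^{m_2-m_1}$, and take logarithms. The only (cosmetic) difference is that you explicitly factor $v = wv'$ and divide, and you spell out the final containment step, which the paper leaves implicit in its ``In particular''.
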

\begin{proof}
By the refinement property (Lemma \ref{refine}) we have that $\Sigma_v \subset \Sigma_w$ and so there exist $m_2 \geq m_1 \geq 0$ such that $w \in \Wbb_{m_1}$ and $v \in \Wbb_{m_2}$, and $v_i = w_i$ for all $1 \leq i \leq m_1$. Then by the comment in Definition \ref{partitions},
\begin{equation*}
r_v  > 2^{-(n+1)}r_{\min} \geq \frac{r_{\min}}{2} r_w = \frac{r_{\min}}{2} r_{v_1 \ldots v_{m_1}},
\end{equation*}
so
\begin{equation*}
\frac{r_{\min}}{2} < r_{\max}^{m_2 - m_1}.
\end{equation*}
Thus $m_2 - m_1 < \frac{\log 2 + \log r_{\min}^{-1}}{\log r_{\max}^{-1}}$.
\end{proof}
\begin{lem}\label{graphdist}
There exists a constant $c_g > 0$ such that if $n \geq 0$ and $w \in \Lambda_n$, then $(F^{n+1}_\Lambda \cap F_w, \sim_{n+1})$ is a connected graph and its graph diameter is at most $c_g$.
\end{lem}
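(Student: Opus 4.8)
The plan is to reduce the statement to a finite check: the graph $(F^{n+1}_\Lambda\cap F_w,\sim_{n+1})$ will be exhibited as containing, as a spanning subgraph, one of only finitely many connected graphs, whence a uniform diameter bound. First I would fix $n\geq 0$ and $w\in\Lambda_n$ and describe how the partition $\Lambda_{n+1}$ subdivides the cell $F_w$. Since $\Lambda_{n+1}$ refines $\Lambda_n$ (Lemma \ref{refine}), each $v\in\Lambda_{n+1}$ satisfies either $\Sigma_v\subseteq\Sigma_w$ or $\Sigma_v\cap\Sigma_w=\emptyset$; writing $V_w:=\{v'\in\Wbb_*:wv'\in\Lambda_{n+1}\}$, the words $v\in\Lambda_{n+1}$ with $\Sigma_v\subseteq\Sigma_w$ are precisely the $wv'$ with $v'\in V_w$, their cells cover $F_w$, and $V_w$ is itself a partition of $\Wbb$. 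By Lemma \ref{partincrbd} every $v'\in V_w$ has length at most $n_*$, so $V_w$ is a subset of the finite set $\bigcup_{k=0}^{n_*}\Wbb_k$; consequently, as $(n,w)$ ranges over all admissible pairs, $V_w$ takes only finitely many values.

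Next I would identify the graph. Using the injectivity of $\psi_w$ together with the standard fact that two distinct cells of a partition can meet only in images of $F^0$ — that is, $F_v\cap F_{v''}\subseteq\psi_v(F^0)\cap\psi_{v''}(F^0)$ whenever $v\neq v''$ lie in a common partition, a consequence of post-critical finiteness (see \cite{kigami2001}) — one checks that
\begin{equation*}
F^{n+1}_\Lambda\cap F_w=\psi_w\Big(\bigcup_{v'\in V_w}\psi_{v'}(F^0)\Big),
\end{equation*}
and that $\psi_w$ carries the graph $G_{V_w}$, whose vertex set is $\bigcup_{v'\in V_w}\psi_{v'}(F^0)$ and in which two points are joined whenever they lie in a common cell $F_{v'}$ with $v'\in V_w$, onto a spanning subgraph of $(F^{n+1}_\Lambda\cap F_w,\sim_{n+1})$. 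Since adding edges only decreases distances, it is enough to show that $G_{\Lambda'}$ is connected for every partition $\Lambda'$ of $\Wbb$; the finitely many possible $V_w$ then share a uniform diameter bound.

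To prove $G_{\Lambda'}$ connected I would note that $F=\bigcup_{v'\in\Lambda'}F_{v'}$ is a finite union of connected compact sets, so its intersection graph $N$ on $\Lambda'$ (an edge whenever $F_{v'}\cap F_{v''}\neq\emptyset$) is connected — otherwise $F$ would decompose as a disjoint union of two nonempty closed sets. If $v'$ and $v''$ are $N$-adjacent, then any $p\in F_{v'}\cap F_{v''}$ lies, by the cell-intersection fact, in $\psi_{v'}(F^0)\cap\psi_{v''}(F^0)$, hence is a vertex of $G_{\Lambda'}$ adjacent there to every vertex of the cliques $\psi_{v'}(F^0)$ and $\psi_{v''}(F^0)$. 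As these cliques cover the vertex set of $G_{\Lambda'}$ and are strung together along the connected graph $N$, $G_{\Lambda'}$ is connected, and any two vertices are joined by a path meeting at most $|\Lambda'|$ of them at a cost of one edge apiece, so $\diam(G_{\Lambda'})\leq|\Lambda'|$. Since $|V_w|\leq M^{n_*}$ for the (finitely many) partitions $V_w$ that arise, we may take $c_g:=M^{n_*}$.

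I expect the main obstacle to be the second step, specifically verifying that $F^{n+1}_\Lambda\cap F_w$ acquires no extra vertices from cells of $\Lambda_{n+1}$ that touch $F_w$ from outside along a common frame; this is precisely where post-critical finiteness enters, via the cell-intersection fact. Everything else is either a direct application of Lemmas \ref{refine} and \ref{partincrbd} or the elementary topological observation that the intersection graph of a finite closed cover of a connected space is connected.
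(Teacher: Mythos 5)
Your proposal is correct and follows essentially the same route as the paper: both identify $F^{n+1}_\Lambda\cap F_w$ with $\psi_w$-images of cells drawn from a sub-partition of bounded depth (via Lemmas \ref{refine} and \ref{partincrbd}), and both reduce connectedness to the connectedness of the intersection graph of a partition of $F$. The paper cites Kigami's Theorem 1.6.2 for that last step and bounds the diameter by the cardinality of $\psi_w(F^{n_*})$, whereas you prove the intersection-graph connectedness directly and bound the diameter by the number of cells $|V_w|\leq M^{n_*}$; these are cosmetic variants of the same argument, and both constants are valid choices of $c_g$.
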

\begin{proof}
For $z \in F^{n+1}_\Lambda \cap F_w$, take $\omega \in \pi^{-1}(z) \cap \Sigma_w$ and let $v \in \Lambda_{n+1}$ be such that $\omega \in \Sigma_v$. Then $\Sigma_v \subseteq \Sigma_w$ by the refinement property. By \cite[Proposition 1.3.5(2)]{kigami2001}, $z \in \psi_v(F^0)$. So then by Lemma \ref{partincrbd} we have $z \in \psi_w(F^{n_*})$ for all $z \in F^{n+1}_\Lambda \cap F_w$. Therefore the graph-length of \textit{any} non-self-intersecting path in the graph $(F^{n+1}_\Lambda \cap F_w, \sim_{n+1})$ cannot be greater than $c_g := |F^{n_*}|$. So if we can verify that $(F^{n+1}_\Lambda \cap F_w, \sim_{n+1})$ is connected, we are done.

Consider by the refinement property (Lemma \ref{refine}) that we must have $F_w = \bigcup_{v \in \Lambda'} F_v$, where
\begin{equation*}
\Lambda' = \{ v \in \Lambda_{n+1}: \Sigma_v \subseteq \Sigma_w \} = \{ wv: v \in \Lambda'' \}
\end{equation*}
for some partition $\Lambda''$. With \cite[Proposition 1.3.5(2)]{kigami2001} in mind, the required connectedness result is thus reduced to showing the following: if a graph structure $\sim$ is defined on $\Lambda''$ such that $v \sim v'$ if and only if $F_v \cap F_{v'} \neq \emptyset$, then the graph $(\Lambda'',\sim)$ is connected. This is proven in exactly the same way as \cite[Theorem 1.6.2, (3)$\Rightarrow$(1)]{kigami2001}.
\end{proof}
\begin{defn}\label{nhoods}
Let $n \geq 0$ and $w \in \Wbb_n$. For $x \in F$ let
\begin{equation*}
D^0_n(x) = \bigcup \{ F_w: w \in \Lambda_n,\ F_w \ni x \}
\end{equation*}
be the \textit{$n$-neighbourhood} of $x$. In addition, let
\begin{equation*}
D^1_n(x) = \bigcup \{ F_w: w \in \Lambda_n,\ F_w \cap D^0_n(x) \neq \emptyset \}.
\end{equation*}
By \cite[Lemma 4.2.3]{kigami2001} it must be the case that the quantities $|\{ w \in \Lambda_n: F_w \ni x \}|$ and $|\{ w \in \Lambda_n:F_w \cap D^0_n(x) \neq \emptyset \}|$ are bounded over all $n \geq 0$ and all $x \in F$. Let
\begin{equation}\label{nhoodconst}
c_4 = \max_{n,x}|\{ w \in \Lambda_n:F_w \cap D^0_n(x) \neq \emptyset \}|. 
\end{equation}
\end{defn}
In particular, observe that $D^0_n(x) \subseteq D^1_n(x)$, and that if $x,y \in F^n_\Lambda$ with $x \sim_n y$ then $y \in D^0_n(x)$.
\begin{defn}
For $x \in F$ and $\epsilon > 0$ let $B(x,\epsilon)$ be the closed ball in $(F,R)$ with centre $x$ and radius $\epsilon$.
\end{defn}
The next result shows that the resistance metric $R$ is topologically well-behaved with respect to the structure of the p.c.f.s.s. set $F$ and the partitions $\Lambda_n$. Compare similar results obtained in \cite[Lemmas 3.2, 3.4]{hambly1999}.
\begin{prop}[Homogeneity of resistance metric]\label{nhoodestim}
There exist constants $c_5,c_6 > 0$ such that
\begin{equation*}
B(x,c_5 2^{-n}) \subseteq D^1_n(x) \subseteq B(x,c_6 2^{-n})
\end{equation*}
for all $n \geq 0$ and all $x \in F$.
\end{prop}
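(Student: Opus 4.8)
The plan is to prove the two inclusions separately, using the scaling properties of the resistance metric under the maps $\psi_w$ together with the bounded-overlap estimates already established for the partitions $\Lambda_n$. The key fact I would invoke is that under the regular harmonic structure the resistance metric scales like $R(\psi_i(x),\psi_i(y)) \asymp r_i R(x,y)$ on each cell, so that for $w \in \Lambda_n$ the cell $F_w$ has resistance diameter comparable to $r_w \asymp 2^{-n}$ (using $r_{\min}2^{-n} < r_w \le 2^{-n}$ from Definition \ref{partitions}). More precisely, I would first record the estimate $\diam(F_w, R) \le C_1 r_w$ for $w \in \Wbb_*$, which follows by iterating \cite[Lemma 3.3.5]{kigami2001} (the same computation used in the Remark to bound $\max_{x,y\in F^0} R(F_i(x),F_i(y))$), and a matching lower bound $\diam(F_w,R) \ge C_2 r_w$ coming from the fact that $F_w$ contains two distinct points of $\psi_w(F^0)$ whose resistance distance scales by exactly the right order (using irreducibility of $A_0$ so that $\max_{x,y\in F^0}R(x,y)>0$, and the subadditive/superadditive sandwich already displayed in the excerpt).

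For the right-hand inclusion $D^1_n(x) \subseteq B(x, c_6 2^{-n})$: by definition $D^1_n(x)$ is a union of at most $c_4$ cells $F_w$ with $w \in \Lambda_n$, each meeting $D^0_n(x) \ni x$, and the whole of $D^0_n(x)$ itself is a union of at most $c_4$ such cells all containing $x$. So any $y \in D^1_n(x)$ can be joined to $x$ by a chain of at most two cells $F_{w_1}, F_{w_2}$ (one containing $x$, one containing $y$, sharing a point), and hence $R(x,y) \le \diam(F_{w_1},R) + \diam(F_{w_2},R) \le 2C_1 \cdot 2^{-n}$ by the diameter bound above. Take $c_6 = 2C_1$.

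For the left-hand inclusion $B(x, c_5 2^{-n}) \subseteq D^1_n(x)$: this is where the real work is, and I expect it to be the main obstacle. The idea is contrapositive — if $y \notin D^1_n(x)$ then $R(x,y) > c_5 2^{-n}$. Since $y \notin D^1_n(x)$, the cell(s) of $\Lambda_n$ containing $x$ and the cell(s) containing $y$ are ``separated by at least one layer'': no cell containing $y$ touches any cell containing $x$. To extract a resistance lower bound I would use the standard shorting/cutting argument, building an explicit test function $f \in \Dcal$ with $f(x)=0$, $f(y)=1$ whose energy is bounded: let $f$ be $0$ on $D^0_n(x)$, $1$ outside $D^1_n(x)$, and harmonic (with respect to $\Ecal$) on the ``annular'' region $D^1_n(x) \setminus D^0_n(x)$, which consists of boundedly many $n$-cells. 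On each such cell the energy of $f$ is at most a constant times $r_w^{-1} \asymp 2^n$ times the squared boundary oscillation (which is $\le 1$), and there are at most $c_4$ such cells, so $\Ecal(f,f) \le C_3 2^n$; hence $R(x,y) \ge \Ecal(f,f)^{-1} \ge C_3^{-1} 2^{-n}$. This gives $c_5 = C_3^{-1}$. The technical care needed here is in the decomposition $F_w = \bigcup F_v$ over $v \in \Lambda_{n+1}$ refining $w$, to see that the annulus really is a finite union of cells on which one can solve a Dirichlet problem with controlled energy; the bounded-overlap constant $c_4$ and Lemma \ref{partincrbd}/\ref{graphdist} are exactly what keep the number of pieces and their ``depth'' uniformly bounded. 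One subtlety is boundary effects when $x \in F^0$ or near $F^0$, where ``$D^1_n(x)$'' may be a one-sided neighbourhood; but the same argument applies verbatim since we only ever need an \emph{upper} bound on the energy of the cutoff and a finite number of cells.
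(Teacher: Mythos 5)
Your proposal is correct and follows essentially the same route as the paper: the outer inclusion via a resistance-diameter bound $\diam(F_w,R)\lesssim r_w$ on $n$-cells (the paper cites \cite[Proposition 7.18(b)]{barlow1998}) plus a two-cell chain, and the inner inclusion via an explicit test function that is $1$ near $x$ and $0$ off $D^1_n(x)$ whose energy is bounded by $O(2^n)$ using that only $\leq c_4$ cells of $\Lambda_n$ contribute, each with energy $O(r_w^{-1})$. The paper makes the test function slightly more concrete — the harmonic extension of $\1bb_{D^0_n(x)}|_{F^n_\Lambda}$, so that each $g\circ\psi_w$ lies in the \emph{finite} set $\Dcal_h$ of $\{0,1\}$-boundary harmonic functions and the self-similar decomposition $\Ecal(g,g)=\sum_{w\in\Lambda_n}r_w^{-1}\Ecal(g\circ\psi_w,g\circ\psi_w)$ immediately gives the bound — but this is a presentational difference, not a different argument.
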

\begin{proof}
For the second inclusion, if $y \in D^1_n(x)$ then there exist $w,v \in \Lambda_n$ such that $x \in F_w$, $y \in F_v$ and $F_w \cap F_v \neq \emptyset$. Then the result is a direct consequence of \cite[Proposition 7.18(b)]{barlow1998} and the definition of $\Lambda_n$.

For the first inclusion, let $\Dcal_h \subseteq \Dcal$ be the set of harmonic functions (see \cite[Proposition 3.2.1]{kigami2001}) $f \in \Dcal$ for which $f(x) \in \{ 0,1\}$ for all $x \in F^0$. A harmonic function is completely characterised by the values it takes on $F^0$ so $|\Dcal_h| = 2^{|F^0|}$. Let
\begin{equation*}
c = \max_{f \in \Dcal_h} \Ecal(f,f) > 0.
\end{equation*}
We now take $g$ to be the harmonic extension to $\Hcal$ of the indicator function $\1bb_{D^0_n(x)}|_{F_\Lambda^n}:F^n_\Lambda \to \Rbb$. Then by self-similarity, if $w \in \Lambda_n$ then the function $g \circ \psi_w$ on $F$ agrees exactly with an element of $\Dcal_h$. Evidently $g(x) = 1$, and if $y \notin D^1_n(x)$ then $g(y) = 0$. Therefore it follows by the definition of the resistance metric and the comment in Definition \ref{partitions} that if $y \notin D^1_n(x)$ then
\begin{equation*}
\begin{split}
R(x,y) &\geq \Ecal(g,g)^{-1}\\
&= \left( \sum_{w \in \Lambda_n} r_w^{-1} \Ecal(g \circ \psi_w,g \circ \psi_w) \right)^{-1}\\
&> r_{\min} \left( c_4 2^n c \right)^{-1},
\end{split}
\end{equation*}
where $c_4$ is defined in \eqref{nhoodconst}, and this completes the proof.
\end{proof}
The next result gives bounds on the growth of the cardinality of the sets $\Lambda_n$ in terms of the Hausdorff dimension $d_H$.
\begin{prop}[Cardinality of $\Lambda_n$]\label{partsize}
For all $n \geq 0$,
\begin{equation*}
2^{d_Hn} \leq |\Lambda_n| < r_{\min}^{-d_H} 2^{d_Hn}.
\end{equation*}
\end{prop}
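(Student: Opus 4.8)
The plan is to estimate $|\Lambda_n|$ by relating it to the total measure $\mu(F) = 1$ via the self-similar structure of $\mu$. Recall that $\mu(F_w) = r_w^{d_H}$ for any word $w$, and that $\Lambda_n = \Lambda(2^{-n})$ is a partition of $\Wbb$, which via $\pi$ corresponds (up to the measure-zero overlaps on cell boundaries) to a partition of $F$ into the cells $\{F_w : w \in \Lambda_n\}$. The key point from Definition \ref{partitions} is that every $w \in \Lambda_n$ satisfies $r_{\min} 2^{-n} < r_w \leq 2^{-n}$, hence
\begin{equation*}
r_{\min}^{d_H} 2^{-d_H n} < \mu(F_w) = r_w^{d_H} \leq 2^{-d_H n}.
\end{equation*}

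First I would establish that $\sum_{w \in \Lambda_n} \mu(F_w) = 1$. Since $\Lambda_n$ is a partition, $\Wbb = \bigsqcup_{w \in \Lambda_n} \Sigma_w$, and under $\pi$ this gives $F = \bigcup_{w \in \Lambda_n} F_w$ with the cells intersecting only in the finite (hence $\mu$-null) set of boundary points; more directly, $\mu(F_w) = r_w^{d_H}$ together with $\sum_{i \in I} r_i^{d_H} = 1$ and an induction on word length (using that refinements of $\Lambda_n$ exist and the $F_w$ partition up to null sets — this is essentially \cite[Proposition 1.5.8 / Section 1.4]{kigami2001}) yields $\sum_{w \in \Lambda_n} r_w^{d_H} = 1$. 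Then the two-sided bound on $\mu(F_w)$ above gives
\begin{equation*}
|\Lambda_n| \cdot r_{\min}^{d_H} 2^{-d_H n} < \sum_{w \in \Lambda_n} \mu(F_w) = 1 \leq |\Lambda_n| \cdot 2^{-d_H n},
\end{equation*}
and rearranging the left inequality gives $|\Lambda_n| < r_{\min}^{-d_H} 2^{d_H n}$ while the right inequality gives $2^{d_H n} \leq |\Lambda_n|$. The case $n = 0$ is immediate since $\Lambda_0$ is a singleton and $r_{\min} < 1 \leq r_{\min}^{-d_H}$.

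The only genuinely delicate point is the claimed identity $\sum_{w \in \Lambda_n} r_w^{d_H} = 1$, i.e. that $\mu(F_w) = r_w^{d_H}$ really does sum to the total mass over an arbitrary partition rather than just over a level set $\Wbb_m$. I expect this to be the main (though routine) obstacle: one either cites that $\mu$ is well-defined on the $\sigma$-algebra generated by all cells and that partitions tile $F$ up to $\mu$-null overlaps (\cite[Section 1.4]{kigami2001}, recalling that the post-critical set is finite so $F^0$ and all $F^n$ are finite), or one argues by induction — every $w \in \Lambda_{n}$ is an extension of a unique $v \in \Lambda_{n-1}$, the children of $v$ in $\Lambda_n$ are words $vu$ with $\sum_u r_{vu}^{d_H} = r_v^{d_H} \sum_u r_u^{d_H} = r_v^{d_H}$ since the relevant $u$'s themselves form a partition of $\Wbb$ — so the sum over $\Lambda_n$ telescopes down to the sum over $\Lambda_0$, which is $r_\emptyset^{d_H} = 1$. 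Everything else is the elementary rearrangement displayed above.
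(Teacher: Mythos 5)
Your proof is correct and takes essentially the same approach as the paper: bound each $\mu(F_w) = r_w^{d_H}$ between $r_{\min}^{d_H}2^{-d_H n}$ and $2^{-d_H n}$ using the definition of $\Lambda_n$, then sum over $w \in \Lambda_n$ and use $\sum_{w\in\Lambda_n}\mu(F_w)=1$. The paper treats the identity $\sum_{w\in\Lambda_n}\mu(F_w)=1$ as immediate, whereas you spell out why it holds for an arbitrary partition (via the telescoping/self-similarity argument); that is a reasonable extra step to include, not a deviation.
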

\begin{proof}
For $n \geq 0$ and $v \in \Lambda_n$, by the definition of the measure $\mu$ we have that
\begin{equation*}
r_{\min}^{d_H}2^{-d_Hn} < \mu(F_v) \leq 2^{-d_Hn}.
\end{equation*}
Then summing over all $v \in \Lambda_n$ gives
\begin{equation*}
r_{\min}^{d_H}2^{-d_Hn}|\Lambda_n| < 1 \leq 2^{-d_Hn}|\Lambda_n|.
\end{equation*}
\end{proof}

\subsection{The continuity theorems}
\begin{thm}[First continuity theorem]\label{cty1}
Let $(E,\Delta)$ be a complete separable metric space. Let $\xi = (\xi_x)_{x \in F }$ be an $E$-valued process indexed by $F$ and let $C,\beta,\gamma>0$ such that
\begin{equation*}
\Ebb\left[ \Delta(\xi_x,\xi_y)^\beta \right] \leq C R(x,y)^{d_H + \gamma}
\end{equation*}
for all $x,y \in F$. Then there exists a version of $\xi$ which is almost surely essentially $\frac{\gamma}{\beta}$-H\"{o}lder continuous with respect to $R$.
\end{thm}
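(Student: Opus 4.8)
The plan is to adapt the classical chaining/dyadic argument behind Kolmogorov's continuity theorem, using the partitions $\Lambda_n$ and the neighbourhood structure $D^0_n,D^1_n$ as the substitute for dyadic subdivisions of an interval. The key geometric facts already available are: (i) Proposition~\ref{nhoodestim}, which says $B(x,c_52^{-n}) \subseteq D^1_n(x) \subseteq B(x,c_62^{-n})$, so the ``scale'' of an $n$-neighbourhood in the resistance metric is comparable to $2^{-n}$; (ii) Proposition~\ref{partsize}, which gives $|\Lambda_n| < r_{\min}^{-d_H}2^{d_H n}$; (iii) Lemma~\ref{graphdist}, which says that inside any $F_w$ with $w \in \Lambda_n$ the refined graph $(F^{n+1}_\Lambda \cap F_w, \sim_{n+1})$ is connected with diameter at most $c_g$; and (iv) the bounded-overlap constant $c_4$ from Definition~\ref{nhoods}. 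The moment hypothesis $\Ebb[\Delta(\xi_x,\xi_y)^\beta] \leq C R(x,y)^{d_H+\gamma}$ plays the role of the usual $\Ebb|X_s-X_t|^\beta \leq C|s-t|^{1+\gamma}$.

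The concrete steps I would carry out are as follows. First, fix the countable dense set $F_* = \bigcup_n F^n_\Lambda$ (Lemma~\ref{lambdasum}) and work on it; the final version of $\xi$ is obtained by continuous extension once uniform continuity on $F_*$ is established. Second, for each level $n$ and each edge $x \sim_{n+1} y$ of the graph $(F^{n+1}_\Lambda,\sim_{n+1})$, note that $R(x,y) \leq c_62^{-(n+1)}$, so by hypothesis and Chebyshev $\Pbb(\Delta(\xi_x,\xi_y) > 2^{-n\eta}) \leq C (c_62^{-(n+1)})^{d_H+\gamma} 2^{n\eta\beta}$ for any $\eta>0$. Summing over the at most $c_4|\Lambda_{n+1}| \lesssim 2^{d_H(n+1)}$ edges at level $n+1$ (using bounded overlap and Proposition~\ref{partsize}), the total probability at level $n$ is bounded by a constant times $2^{-n(\gamma - \eta\beta)}$, which is summable in $n$ whenever $\eta < \gamma/\beta$. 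By Borel--Cantelli, almost surely there is a random $N$ such that for all $n \geq N$ and every edge $x \sim_{n+1} y$ we have $\Delta(\xi_x,\xi_y) \leq 2^{-n\eta}$. Third, run the chaining argument: given $x,y \in F_*$ with $R(x,y)$ small, pick $n$ with $c_52^{-n} \leq R(x,y)$, say, so that $y \in D^1_n(x)$ roughly; connect $x$ to $y$ through points in $F^m_\Lambda$ for $m = n, n+1, \ldots$, at each level moving within a single cell $F_w$ ($w \in \Lambda_m$) along a path of graph-length at most $c_g$ by Lemma~\ref{graphdist}, and telescoping. Each step at level $m$ contributes at most $c_g 2^{-m\eta}$, the geometric series sums to $O(2^{-n\eta}) = O(R(x,y)^{\eta})$, giving $\Delta(\xi_x,\xi_y) \leq \epsilon_\eta R(x,y)^{\eta}$ for all $x,y \in F_*$ with $R(x,y)$ below a threshold; a standard argument upgrades this to a global Hölder bound with a possibly larger constant. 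Since $\eta$ can be taken arbitrarily close to $\gamma/\beta$, this is exactly essential $\frac{\gamma}{\beta}$-Hölder continuity. Fourth, extend $\xi$ by uniform continuity to all of $F$ and check this is a version of the original process (using that $F_*$ is dense and that for a fixed $x$ the moment bound forces $\xi_{x_k} \to \xi_x$ in $L^\beta$, hence in probability, along any $x_k \to x$ in $F_*$).

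The main obstacle, and the place where the fractal setting genuinely differs from the interval, is the chaining step: on $[0,1]$ one moves between consecutive dyadic points, but here I need a controlled way to walk from a point of $F^m_\Lambda$ to a ``nearby'' point of $F^{m+1}_\Lambda$ while (a) keeping the number of edges traversed bounded independently of $m$ and (b) ensuring the path stays inside a region whose resistance diameter is $O(2^{-m})$. This is precisely what Lemma~\ref{graphdist} and Lemma~\ref{partincrbd} are designed to supply — connectivity of the refined graph inside a single cell $F_w$ with a uniform diameter bound $c_g$ — so the real work is in carefully organizing the telescoping sum: writing $x = z_0, z_1, z_2, \ldots$ with $z_k \in F^{n+k}_\Lambda$, $z_k$ and $z_{k+1}$ lying in a common cell of $\Lambda_{n+k}$, and $z_k \to y$; then $\Delta(\xi_x,\xi_y) \leq \sum_k \Delta(\xi_{z_k},\xi_{z_{k+1}})$ and each term is handled by the level-$(n+k)$ good event together with the graph-path bound. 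One must also take a little care at the coarsest scale (the threshold $n$ where $R(x,y) \asymp 2^{-n}$) to make sure $x$ and $y$ land in the same or adjacent cells so the first step of the chain is legitimate; Proposition~\ref{nhoodestim} handles this. Everything else — the Borel--Cantelli bookkeeping, the passage from ``essentially Hölder on $F_*$'' to ``on $F$'', and the verification that we have a version — is routine.
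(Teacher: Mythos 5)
Your proposal is correct and follows essentially the same route as the paper's proof: restrict to the countable dense set $F_*$, use a Markov/Chebyshev estimate plus Proposition~\ref{partsize} to bound the probability that some level-$n$ edge increment exceeds $2^{-n\delta}$, apply Borel--Cantelli, and then chain across levels using Lemma~\ref{graphdist} and Proposition~\ref{nhoodestim} to sum a geometric series. The only cosmetic differences are organizational: the paper packages the level-$n$ bound as a single random variable $K_n = \sup_{x\sim_n y}\Delta(\xi_x,\xi_y)$ and extracts a random constant $J$ with $K_n\le 2^{-n\delta}J$ for \emph{all} $n$ (which directly yields a global H\"older constant), whereas you work with a random threshold level $N$; and the paper chains $x$ and $y$ separately down to a common vertex $z\in\psi_w(F^0)\cap\psi_v(F^0)$ at the coarsest scale $m_0$ rather than chaining $x$ directly to $y$ — but both are the same dyadic-style telescoping and your phrasing, while slightly loose (``$z_k\to y$'' is really a finite chain terminating at level $m_y$), clearly reflects the same idea.
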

\begin{proof}
The set $F$ is uncountable, but $F_* = \bigcup_{n = 0}^\infty F^n_\Lambda$ is countable and dense in $F$. We may therefore consider the countable set $(\xi_x)_{x \in F_* }$ without issues of measurability. Let $\delta \in (0,\frac{\gamma}{\beta})$ and define the measurable event
\begin{equation*}
\Omega_\delta = \left\{ \xi'_x := \lim_{\substack{y \to x\\y \in F_*}}\xi_y \text{ exists $\forall x \in F$ and $x \mapsto \xi'_x$ is $\delta$-H\"{o}lder w.r.t. $(F,R)$} \right\}.
\end{equation*}
We then define the random variables $\hat{\xi}_x$ for $x \in F$ by
\begin{equation*}
\hat{\xi}_x =
\begin{cases}
\begin{array}{ll}
\xi'_x & \text{if }\xi \in \bigcap\{\Omega_\delta: \delta \in \Qbb \cap (0,\frac{\gamma}{\beta}) \},\\
x_0 & \text{otherwise},
\end{array}
\end{cases}
\end{equation*}
for some arbitrary fixed $x_0 \in E$. Then $\hat{\xi} := (\hat{\xi}_x)_{x \in F}$ is measurable and essentially $\frac{\gamma}{\beta}$-H\"{o}lder continuous. If $\Pbb [\Omega_\delta] = 1$ for all $\delta \in \Qbb \cap (0,\frac{\gamma}{\beta})$ then $\hat{\xi}$ is also a version of $\xi$. This is because $\hat{\xi}_x$ is then the almost-sure limit of $(\xi_y)_{y \in F_*}$ as $y \to x$, so applying Fatou's lemma to the estimate in the statement of this theorem shows that $\hat{\xi}_x = \xi_x$ almost surely. It therefore suffices to show that $\Pbb [\Omega_\delta] = 1$ for all $\delta \in (0,\frac{\gamma}{\beta})$. We define the random variable
\begin{equation*}
H_\delta = \sup_{\substack{x,y \in F_*\\x \neq y}} \frac{\Delta(\xi_x,\xi_y)}{R(x,y)^\delta}
\end{equation*}
to be the H\"{o}lder norm of $\xi$ restricted to $F_*$, and we observe that $\Omega_\delta = \{ H_\delta < \infty \}$ by the completeness of $E$. For $n \geq 0$ we also define the random variables
\begin{equation*}
K_n = \sup_{\substack{x,y \in F^n_\Lambda\\x \sim_n y}} \Delta(\xi_x,\xi_y).
\end{equation*}
By Proposition \ref{partsize},
\begin{equation*}
\left\vert\{ (x,y) \in F^n_\Lambda \times F^n_\Lambda : x \sim_n y \}\right\vert \leq |F^0|^2 \cdot |\Lambda_n| \leq |F^0|^2 r_{\min}^{-d_H} 2^{d_H n}.
\end{equation*}
Then using the Markov inequality and Proposition \ref{nhoodestim},
\begin{equation*}
\begin{split}
\Pbb \left[ K_n > 2^{-n\delta} \right] &= \Pbb \left[ K_n^\beta > 2^{-n\delta\beta} \right]\\
&\leq \frac{1}{2}\sum_{\substack{x,y \in F^n_\Lambda\\x \sim_n y}} \Pbb \left[ \Delta(\xi_x,\xi_y)^\beta > 2^{-n\delta\beta} \right]\\
&\leq \frac{2^{n\delta\beta}}{2}\sum_{\substack{x,y \in F^n_\Lambda\\x \sim_n y}} \Ebb \left[ \Delta(\xi_x,\xi_y)^\beta \right]\\
&\leq \frac{C 2^{n\delta\beta}}{2} \sum_{\substack{x,y \in F^n_\Lambda\\x \sim_n y}} R(x,y)^{d_H + \gamma}\\
&\leq C' 2^{d_H n} 2^{-n(d_H + \gamma - \delta\beta)}\\
&= C' 2^{-n(\gamma - \delta\beta)}
\end{split}
\end{equation*}
for some constant $C'>0$. Now $\delta\beta < \gamma$ so
\begin{equation*}
\sum_{n=0}^\infty \Pbb \left[ K_n > 2^{-n\delta} \right] < \infty,
\end{equation*}
so by the Borel-Cantelli lemma we have that $\limsup_{n \to \infty} (2^{n\delta}K_n) \leq 1$ almost surely. In particular there exists an almost surely finite postive random variable $J$ such that $K_n \leq 2^{-n\delta}J$ for all $n \geq 0$ almost surely.

Now recall the constant $c_g$ from Lemma \ref{graphdist}. Let $x,y \in F_*$ be distinct points, and let $m_0$ be the greatest integer such that $y \in D^1_{m_0}(x)$ (which exists by Proposition \ref{nhoodestim}). Then there exists $w,v \in \Lambda_{m_0}$ such that $x \in F_w$, $y \in F_v$ and there exists some $z \in F_w \cap F_v$. In fact by \cite[Proposition 1.3.5]{kigami2001} and the definition of a partition, we can choose $z$ to be in $\psi_w(F^0) \cap \psi_v(F^0)$ so that in particular $z \in F^{m_0}_\Lambda$. Now if $x \in F^{m_0}_\Lambda$ then it follows by Lemma \ref{graphdist} that
\begin{equation*}
\Delta(\xi_x,\xi_z) \leq c_g K_{m_0 + 1}.
\end{equation*}
Otherwise, there exists $m > m_0$ such that $x \in F^m_\Lambda$ and we construct a finite sequence $(x_i)_{i=0}^{m-m_0}$ such that $x_0 = x$, $x_i \in F^{m-i}_\Lambda \cap D^0_{m-i}(x_{i-1}) \cap F_w$ for $i \geq 1$ and $x_{m-m_0} = z$. This can be done in the following way: assume that we already have $x_{i-1} \in F^{m-(i-1)}_\Lambda \cap F_w$ for some $i \in \{ 1,\ldots,m-m_0 \}$. There exists $w^{i-1} \in \Sigma_w$ such that $\pi(w^{i-1}) = x_{i-1}$. Since $\Lambda_{m-i}$ is a partition, there exists $v^{i-1} \in \Lambda_{m-i}$ such that $w^{i-1} \in \Sigma_{v^{i-1}}$. Therefore $F_{v^{i-1}} \ni x_{i-1}$, so we may pick $x_i$ to be some element of $\psi_{v^{i-1}}(F^0)$. By the refinement property $\Sigma_{v^{i-1}} \subseteq \Sigma_w$ so we have that $x_i \in F^{m-i}_\Lambda \cap D^0_{m-i}(x_{i-1}) \cap F_w$. If $i = m - m_0$ then necessarily $v^{m-m_0-1} = w$ and we can specifically choose $x_{m-m_0} = z$.

We then have by Lemma \ref{graphdist} that
\begin{equation*}
\begin{split}
\Delta(\xi_x,\xi_z) &\leq \sum_{i=1}^{m-m_0}\Delta(\xi_{x_i},\xi_{x_{i-1}})\\
&\leq c_g\sum_{i=1}^{m-m_0} K_{m+1-i}\\
&\leq c_g\sum_{n=m_0 + 1}^\infty K_n.
\end{split}
\end{equation*}
We can make the same estimate for $y$ and $z$. Therefore we conclude that
\begin{equation*}
\Delta(\xi_x,\xi_y) \leq 2c_g \sum_{n=m_0+1}^\infty K_n \leq 2c_g J \sum_{n=m_0+1}^\infty 2^{-n\delta} = \frac{2c_g 2^{-(m_0+1)\delta}}{1-2^{-\delta}} J .
\end{equation*}
Now $m_0$ was chosen such that $y \notin D^1_{m_0+1}(x)$, so we use Proposition \ref{nhoodestim} to conclude that $R(x,y) > c_5 2^{-(m_0 + 1)}$. Thus we find that
\begin{equation*}
\frac{\Delta(\xi_x,\xi_y)}{R(x,y)^\delta} \leq C''J
\end{equation*}
for all $x \neq y$ in $F_*$ almost surely, where $C'' > 0$ is a constant. So $H_\delta$ is almost surely finite. So $\Pbb [\Omega_\delta] = 1$.
\end{proof}
\begin{rem}
Taking $F = [0,1]$ as in Example \ref{interval} and $E$ to be the Hilbert space $\Rbb^n$ we obtain the original Kolmogorov continuity theorem.
\end{rem}
We would like the solution to our SPDE to be a (random) map $[0,\infty) \times F \to \Rbb$, so the previous theorem is not quite enough. We now seek to prove a version of it for stochastic processes indexed by $[0,1] \times F$. Let $G$ be the set $[0,1] \times F$ equipped with the natural supremum metric on $\Rbb \times F$ given by
\begin{equation*}
R_\infty((s,x),(t,y)) = \max\{ |s-t|, R(x,y) \}.
\end{equation*}
\begin{prop}\label{cty2}
Let $(E,\Delta)$ be a complete separable metric space. Let $\xi = (\xi_{tx}:(t,x) \in [0,1] \times F )$ be an $E$-valued process indexed by $[0,1] \times F$ and let $C,\beta,\gamma,\gamma'>0$ be such that
\begin{equation*}
\begin{split}
\Ebb\left[ \Delta(\xi_{tx},\xi_{ty})^\beta \right] &\leq C R(x,y)^{d_H + 1 + \gamma},\\
\Ebb\left[ \Delta(\xi_{sx},\xi_{tx})^\beta \right] &\leq C |s-t|^{d_H + 1 + \gamma'}
\end{split}
\end{equation*}
for all $s,t \in [0,1]$ and all $x,y \in F$. Then there exists a version of $\xi$ which is almost surely essentially $\frac{\gamma \wedge \gamma'}{\beta}$-H\"{o}lder continuous with respect to $G = ([0,1] \times F,R_\infty)$.
\end{prop}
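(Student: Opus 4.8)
The plan is to transcribe the proof of Theorem~\ref{cty1}, replacing the nets $F^n_\Lambda$ by the product nets $\Dbb_n \times F^n_\Lambda$, where $\Dbb_n = \{ j 2^{-n} : 0 \le j \le 2^n \}$ is the dyadic partition of $[0,1]$ at level $n$. With $\Dbb_* = \bigcup_{n\ge 0}\Dbb_n$, the set $G_* := \Dbb_* \times F_*$ is countable and dense in $G$, and since $(\Dbb_n)_n$ and $(F^n_\Lambda)_n$ are both increasing, every point of $G_*$ lies in $\Dbb_m \times F^m_\Lambda$ for all large enough $m$; this last fact is what makes the chaining work. As in Theorem~\ref{cty1}, it suffices to show that for each $\delta \in (0,\frac{\gamma\wedge\gamma'}{\beta})$ the restriction $\xi|_{G_*}$ is almost surely $\delta$-H\"older with respect to $R_\infty$. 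The construction of a version $\hat{\xi}$ on all of $G$ via $\hat{\xi}_{tx} = \lim_{q \to (t,x),\,q\in G_*}\xi_q$, the check (via Fatou's lemma, after splitting an increment $\Delta(\xi_q,\xi_{tx})$ into a spatial and a temporal part and applying the two hypotheses) that $\hat{\xi}$ is a version, and the conclusion that $\hat{\xi}$ is essentially $\frac{\gamma\wedge\gamma'}{\beta}$-H\"older then follow exactly as in that proof.

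Fix $\delta$ with $\delta\beta < \gamma\wedge\gamma'$. Call $p=(s,x)$ and $q=(t,y)$ in $\Dbb_n\times F^n_\Lambda$ \emph{$n$-adjacent} if $|s-t|\le 2^{-n}$ and either $x=y$ or $x\sim_n y$, and set $K_n = \sup\{\Delta(\xi_p,\xi_q): p,q\in\Dbb_n\times F^n_\Lambda \text{ are }n\text{-adjacent}\}$. For an $n$-adjacent pair, the triangle inequality and the two hypotheses give $\Ebb[\Delta(\xi_{sx},\xi_{ty})^\beta] \lesssim \Ebb[\Delta(\xi_{sx},\xi_{sy})^\beta] + \Ebb[\Delta(\xi_{sy},\xi_{ty})^\beta] \lesssim R(x,y)^{d_H+1+\gamma} + |s-t|^{d_H+1+\gamma'} \lesssim 2^{-n(d_H+1+\gamma\wedge\gamma')}$, where we used $R(x,y)\lesssim 2^{-n}$ when $x\sim_n y$ (from \cite[Proposition 7.18(b)]{barlow1998} and the definition of $\Lambda_n$, as in the proof of Proposition~\ref{nhoodestim}). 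The set of ordered $n$-adjacent pairs is a product of a set of $\le 3(2^n+1)$ time-pairs with a set of $\lesssim |\Lambda_n|$ space-pairs (cf.\ the bound $|F^0|^2|\Lambda_n|$ used in the proof of Theorem~\ref{cty1}), so by Proposition~\ref{partsize} there are $\lesssim 2^{(d_H+1)n}$ of them. Markov's inequality then gives $\Pbb[K_n > 2^{-n\delta}] \lesssim 2^{n\delta\beta}\cdot 2^{(d_H+1)n}\cdot 2^{-n(d_H+1+\gamma\wedge\gamma')} = 2^{-n(\gamma\wedge\gamma'-\delta\beta)}$, which is summable; by Borel--Cantelli there is an almost surely finite $J>0$ with $K_n\le 2^{-n\delta}J$ for all $n$.

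For the chaining, let $p=(s,x)$ and $q=(t,y)$ be distinct points of $G_*$, and let $m_0$ be the largest integer with $|s-t|\le 2^{-m_0}$ and $y\in D^1_{m_0}(x)$; failure of one of these at level $m_0+1$, together with Proposition~\ref{nhoodestim}, gives $R_\infty(p,q)\gtrsim 2^{-m_0}$. Pick $m>m_0$ with $p,q\in\Dbb_m\times F^m_\Lambda$, and pick $w,v\in\Lambda_{m_0}$ and $z\in\psi_w(F^0)\cap\psi_v(F^0)\subseteq F^{m_0}_\Lambda$ with $x\in F_w$ and $y\in F_v$, exactly as in the proof of Theorem~\ref{cty1}. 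The aim is to descend from $(s,x)$ to $(s_0,z)$, where $s_0:=\lfloor 2^{m_0}s\rfloor2^{-m_0}$ is the level-$m_0$ truncation of $s$, through the levels $m,m-1,\dots,m_0$ \emph{simultaneously} in both coordinates: at stage $k$ we sit at $(\sigma_k,x_k)$, where $\sigma_k:=\lfloor 2^{m-k}s\rfloor2^{-(m-k)}$ (so $\sigma_0=s$, $\sigma_{m-m_0}=s_0$, and $\sigma_{k-1}-\sigma_k\in\{0,2^{-(m-k+1)}\}$) and $x_k\in F^{m-k}_\Lambda$ is the point $x_k$ from the construction of $(x_i)_{i=0}^{m-m_0}$ in the proof of Theorem~\ref{cty1} (so $x_{k-1},x_k$ lie in a common $\Lambda_{m-k}$-cell $F_u$ and are therefore joined, by Lemma~\ref{graphdist}, by a path of length $\le c_g$ in $(F^{m-k+1}_\Lambda\cap F_u,\sim_{m-k+1})$). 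We realise the transition $(\sigma_{k-1},x_{k-1})\to(\sigma_k,x_k)$ by first moving the time from $\sigma_{k-1}$ to $\sigma_k$ at the fixed vertex $x_{k-1}\in F^{m-k+1}_\Lambda$, then moving from $x_{k-1}$ to $x_k$ along that path at the fixed time $\sigma_k$; since $\Dbb_{m-k}\subseteq\Dbb_{m-k+1}$, every intermediate pair is $(m-k+1)$-adjacent, so the transition costs at most $(1+c_g)K_{m-k+1}$. Summing over $k=1,\dots,m-m_0$ gives $\Delta(\xi_{sx},\xi_{s_0z})\le(1+c_g)\sum_{n>m_0}K_n$, and symmetrically $\Delta(\xi_{ty},\xi_{t_0z})\le(1+c_g)\sum_{n>m_0}K_n$ with $t_0$ the level-$m_0$ truncation of $t$. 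Finally $|s_0-t_0|\le|s_0-s|+|s-t|+|t-t_0|<3\cdot2^{-m_0}$ is a multiple of $2^{-m_0}$, so $(s_0,z)$ and $(t_0,z)$ are joined by at most two $m_0$-adjacent steps and $\Delta(\xi_{s_0z},\xi_{t_0z})\le 2K_{m_0}$. Combining the three estimates and inserting $K_n\le 2^{-n\delta}J$ gives $\Delta(\xi_{sx},\xi_{ty})\lesssim J\,2^{-m_0\delta}\lesssim J\,R_\infty(p,q)^\delta$ uniformly over distinct $p,q\in G_*$, as required.

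The one genuinely delicate point is the level synchronisation in the last paragraph. The naive route --- hold $x$ fixed and descend in time, then hold time fixed and descend in space --- fails, because a time step at a level-$(m-k)$ vertex $x_k$ requires the time coordinate to already be a level-$(m-k)$ dyadic, which it will not be after a partial spatial descent. The interleaved descent above fixes this by keeping both coordinates at matching levels throughout, and this is the only real bookkeeping burden; the moment estimate, the counting of $n$-adjacent pairs, the Borel--Cantelli step and the passage to a version are all routine transcriptions of the proof of Theorem~\ref{cty1} with $2^{d_Hn}$ replaced by $2^{(d_H+1)n}$.
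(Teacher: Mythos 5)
Your proof is correct and follows the same approach as the paper, which only sketches the argument by saying it parallels the proof of Theorem~\ref{cty1}. You fill in the genuinely nontrivial part of the adaptation --- the chaining step --- with an explicit interleaved descent that keeps the temporal and spatial coordinates at matching dyadic/partition levels, and you correctly identify why a naive ``time then space'' descent would fail. The paper's sketch uses a slightly different adjacency relation ($|s-t|=2^{-n}$ with $x=y$, \emph{or} $s=t$ with $x\sim_n y$, rather than your conjunction $|s-t|\le 2^{-n}$ \emph{and} ($x=y$ or $x\sim_n y$)), and descends both coordinates to a common point $(\tau,z)\in G^{m_0}$ rather than to separate truncations $(s_0,z)$ and $(t_0,z)$ bridged by two $m_0$-steps; both variants are correct and yield the same counting ($\lesssim 2^{(d_H+1)n}$ pairs at level $n$), the same Borel--Cantelli bound, and the same final estimate. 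In short, your proof is a faithful and more detailed rendering of what the paper leaves ``up to the reader,'' with equivalent but not identical bookkeeping.
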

\begin{proof}
This proof proceeds in much the same way as in Theorem \ref{cty1}, so we only give an outline.

For $n \geq 0$ we let
\begin{equation*}
G^n = \{ k 2^{-n}: k= 0,1,\ldots, 2^n \} \times F^n_\Lambda
\end{equation*}
and
\begin{equation*}
G_* = \bigcup_{n=0}^\infty G^n,
\end{equation*}
then $G_*$ is countable and dense in $G$. Then for each $n \geq 0$ we define a relation $\ast_n$ on $G^n$ by $(s,x) \ast_n (t,y)$ if and only if either ($|s-t| = 2^{-n}$ and $x = y$) or ($s = t$ and $x \sim_n y$). Notice that this implies that if $(s,x) \ast_n (t,y)$ then $R_\infty((s,x),(t,y)) \leq (c_6 \vee 1)2^{-n}$, by Proposition \ref{nhoodestim}. Then as before we can define
\begin{equation*}
K_n = \sup_{\substack{p,q \in G^n\\p \ast_n q}} \Delta(\xi_p,\xi_q).
\end{equation*}
Since both $[0,1]$ and $(F,R)$ are bounded, for $\delta \in (0, \frac{\gamma \wedge \gamma'}{\beta})$ this satisfies
\begin{equation*}
\begin{split}
\Pbb \left[ K_n > 2^{-n\delta} \right] &= \Pbb \left[ K_n^\beta > 2^{-n\delta\beta} \right]\\
&\leq \frac{1}{2}\sum_{\substack{p,q \in G^n\\p \ast_n q}} \Pbb \left[ \Delta(\xi_p,\xi_q)^\beta > 2^{-n\delta\beta} \right]\\
&\leq \frac{2^{n\delta\beta}}{2}\sum_{\substack{p,q \in G^n\\p \ast_n q}} \Ebb \left[ \Delta(\xi_p,\xi_q)^\beta \right]\\
&\leq 2^{\beta - 1} 2^{n\delta\beta}\sum_{\substack{(s,x),(t,y) \in G^n\\(s,x) \ast_n (t,y)}} \Ebb \left[ \Delta(\xi_{sx},\xi_{tx})^\beta + \Delta(\xi_{tx},\xi_{ty})^\beta \right]\\
&\leq C 2^{\beta - 1} 2^{n\delta\beta} \sum_{\substack{(s,x),(t,y) \in G^n\\(s,x) \ast_n (t,y)}} \left( |s-t|^{d_H + 1 + \gamma'} + R(x,y)^{d_H + 1 + \gamma} \right)\\
&\leq C' 2^{n\delta\beta} \sum_{\substack{(s,x),(t,y) \in G^n\\(s,x) \ast_n (t,y)}} R_\infty((s,x),(t,y))^{d_H + 1 + \gamma \wedge \gamma'}\\
&\leq C'' 2^{d_H n} 2^n 2^{-n(d_H + 1 + \gamma \wedge \gamma' - \delta\beta)}\\
&= C'' 2^{-n(\gamma \wedge \gamma' - \delta\beta)}.
\end{split}
\end{equation*}
So as in Theorem \ref{cty1}, there exists an almost surely finite positive random variable $J$ such that $K_n \leq 2^{-n\delta}J$ for all $n \geq 0$ almost surely. The sets analogous to $D^0_n(x)$ and $D^1_n(x)$ in Theorem \ref{cty1} are given by
\begin{equation*}
\hat{D}^0_n(s,x) = ([s_-,s^-] \cap [0,1]) \times D^0_n(x)
\end{equation*}
and
\begin{equation*}
\hat{D}^1_n(s,x) = ([s_- - 2^{-n}, s^- + 2^{-n}] \cap [0,1]) \times D^1_n(x)
\end{equation*}
where $s_- = \max \{ k2^{-n}: k \in \Zbb,\ k2^{-n}< s \}$, $s^- = \min \{ k2^{-n}: k \in \Zbb,\ k2^{-n}> s \}$. Using Proposition \ref{nhoodestim} it is simple to verify the analogous result that
\begin{equation*}
B_\infty(p,(c_5 \wedge 1) 2^{-n}) \subseteq \hat{D}^1_n(p) \subseteq B_\infty(p,(c_6 \vee 2) 2^{-n})
\end{equation*}
for all $n \geq 0$ and all $p \in G$, where $B_\infty$ denotes the closed $R_\infty$-balls of $G$. Now if $(s,x), (t,y) \in G_*$ are distinct points, let $m_0$ be the greatest integer such that $(t,y) \in \hat{D}^1_{m_0}(s,x)$. Then there exists $w,v \in \Lambda_{m_0}$ and $\tau_1,\tau_2 \in \{ k2^{-m_0}: k= 0,1,\ldots,2^{m_0} - 1 \}$ such that
\begin{equation*}
\begin{split}
(s,x) &\in [\tau_1,\tau_1 + 2^{-m_0}] \times F_w,\\
(t,y) &\in [\tau_2,\tau_2 + 2^{-m_0}] \times F_v,
\end{split}
\end{equation*}
and there exists some
\begin{equation*}
(\tau,z) \in [\tau_1,\tau_1 + 2^{-m_0}] \times F_w \cap [\tau_2,\tau_2 + 2^{-m_0}] \times F_v.
\end{equation*}
In fact just as in the proof of Theorem \ref{cty1} we may pick $(\tau,z)$ such that
\begin{equation*}
(\tau,z) \in \{\tau_1,\tau_1 + 2^{-m_0}\} \times \psi_w(F^0) \cap \{\tau_2,\tau_2 + 2^{-m_0}\} \times \psi_v(F^0) \subset G^{m_0}.
\end{equation*}
We can then estimate $\Delta(\xi_{sx},\xi_{ty})$ by constructing suitable finite sequences of points from $(s,x)$ to $(\tau,z)$ and from $(t,y)$ to $(\tau,z)$, similar to the proof of Theorem \ref{cty1}.

The rest of the details of the proof are left up to the reader; it suffices to adapt the proof of Theorem \ref{cty1}, using for example $c_g + 2$ instead of $c_g$. 
\end{proof}
We now extend the previous result to this section's main theorem, which includes spatial and temporal H\"{o}lder exponents.
\begin{thm}[Second continuity theorem]\label{cty3}
Let $(E,\Delta)$ be a complete separable metric space. Let $\xi = (\xi_{tx}:(t,x) \in [0,1] \times F )$ be an $E$-valued process indexed by $[0,1] \times F$ and let $C,\beta,\gamma,\gamma'>0$ be such that
\begin{equation}\label{ctyestim}
\begin{split}
\Ebb\left[ \Delta(\xi_{tx},\xi_{ty})^\beta \right] &\leq C R(x,y)^{d_H + 1 + \gamma},\\
\Ebb\left[ \Delta(\xi_{sx},\xi_{tx})^\beta \right] &\leq C |s-t|^{d_H + 1 + \gamma'}
\end{split}
\end{equation}
for all $s,t \in [0,1]$ and all $x,y \in F$. Then there exists a version $\hat{\xi} = (\hat{\xi}_{tx}:(t,x) \in [0,1] \times F )$ of $\xi$ which satisfies the following:
\begin{enumerate}
\item The map $(t,x) \mapsto \hat{\xi}_{tx}$ is almost surely essentially $\delta_0$-H\"{o}lder continuous with respect to $R_\infty$ where
\begin{equation*}
\delta_0 = \frac{1}{\beta} \left( \gamma \wedge \gamma' \right).
\end{equation*}
\item For every $t \in [0,1]$ the map $x \mapsto \hat{\xi}_{tx}$ is almost surely essentially $\delta_1$-H\"{o}lder continuous with respect to $R$ where
\begin{equation*}
\delta_1 = \frac{1}{\beta} \left( 1 + \gamma \right).
\end{equation*}
\item For every $x \in F$ the map $t \mapsto \hat{\xi}_{tx}$ is almost surely essentially $\delta_2$-H\"{o}lder continuous with respect to the Euclidean metric where
\begin{equation*}
\delta_2 = \frac{1}{\beta} \left( d_H + \gamma' \right).
\end{equation*}
\end{enumerate}
\end{thm}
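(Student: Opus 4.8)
The plan is to deduce Theorem \ref{cty3} from Proposition \ref{cty2} together with Theorem \ref{cty1}, rather than re-running the chaining argument three times. The key observation is that the two hypotheses \eqref{ctyestim} are exactly the hypotheses of Proposition \ref{cty2}, so part (1) is immediate: Proposition \ref{cty2} already produces a version $\hat\xi$ that is almost surely essentially $\frac{\gamma\wedge\gamma'}{\beta}$-H\"older with respect to $R_\infty$, which is the statement $\delta_0 = \frac1\beta(\gamma\wedge\gamma')$. So the only work is to show that \emph{this same version} $\hat\xi$ satisfies the sharper directional estimates (2) and (3).

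For part (2), fix $t\in[0,1]$ and consider the $F$-indexed process $\xi^{(t)} := (\xi_{tx})_{x\in F}$. The first line of \eqref{ctyestim} says precisely that $\xi^{(t)}$ satisfies the hypothesis of Theorem \ref{cty1} with exponent $d_H + (1+\gamma)$ in place of $d_H+\gamma$, i.e. with ``$\gamma$'' there equal to $1+\gamma$. Hence Theorem \ref{cty1} gives a version of $\xi^{(t)}$ that is a.s.\ essentially $\frac{1+\gamma}{\beta}$-H\"older in $R$; this is $\delta_1$. The point requiring a sentence of care is that we want $\hat\xi_{t\cdot}$ itself — the slice of the version from Proposition \ref{cty2} — to have this regularity, not merely \emph{some} version. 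This follows because both constructions (the one in Theorem \ref{cty1} and the one in Proposition \ref{cty2}) define the value at a point as the limit along the countable dense set $F_*$ of the original $\xi_y$, so on the full-probability event where these limits exist they agree; alternatively, two continuous versions of a process indexed by the separable space $F$ (resp.\ $G$) that agree in law coincide a.s., and the slice $x\mapsto\hat\xi_{tx}$ is continuous and is a version of $\xi^{(t)}$, hence a.s.\ equals the version produced by Theorem \ref{cty1}, inheriting its modulus of continuity. Part (3) is identical with the roles swapped: for fixed $x\in F$ the process $(\xi_{tx})_{t\in[0,1]}$ satisfies the classical Kolmogorov hypothesis on $[0,1]$ with exponent $1 + (d_H+\gamma')$, so the standard Kolmogorov continuity theorem (which is the $F=[0,1]$ case of Theorem \ref{cty1}, as noted in the remark after that theorem) yields essential $\frac{d_H+\gamma'}{\beta}$-H\"older continuity in the Euclidean metric, which is $\delta_2$, and again the slice $t\mapsto\hat\xi_{tx}$ of the joint version coincides a.s.\ with it.

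I expect the only genuinely delicate point — and the one I would write out carefully — to be this identification of slices: the joint H\"older estimate in $R_\infty$ from part (1) only gives spatial exponent $\frac{\gamma\wedge\gamma'}{\beta}$ and temporal exponent $\frac{\gamma\wedge\gamma'}{\beta}$, which are weaker than $\delta_1$ and $\delta_2$, so parts (2) and (3) are not formal consequences of part (1); they genuinely need the one-dimensional estimates separately. The cleanest way to organise it is: (a) invoke Proposition \ref{cty2} to fix $\hat\xi$ and get (1); (b) note that for each fixed $t$, $x\mapsto\hat\xi_{tx}$ is continuous (by (1)) and is a version of $(\xi_{tx})_x$; (c) apply Theorem \ref{cty1} to $(\xi_{tx})_x$ with exponent $1+\gamma$ to get \emph{a} version with the $\delta_1$ modulus; (d) two continuous versions of an $E$-valued process indexed by the separable metric space $(F,R)$ are indistinguishable, so $x\mapsto\hat\xi_{tx}$ itself is a.s.\ essentially $\delta_1$-H\"older; (e) repeat (b)–(d) for the time direction using $[0,1]$ and exponent $1+d_H+\gamma'$. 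A minor bookkeeping point is that Theorem \ref{cty1} as stated has ``$d_H$'' hard-coded as the base exponent, so to feed it the one-dimensional time process one should cite the remark that the classical Kolmogorov theorem is the $F=[0,1]$ special case; no new argument is needed.
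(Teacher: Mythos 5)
Your proposal follows essentially the same route as the paper: part (1) is exactly Proposition \ref{cty2}, and parts (2) and (3) come from applying Theorem \ref{cty1} (respectively the classical Kolmogorov theorem on $[0,1]$) to each fixed-time (respectively fixed-space) slice and then identifying that slice with the continuous version from (1) via the fact that two a.s.\ continuous versions of the same process on a separable metric space are indistinguishable. One small slip: you write at one point ``two continuous versions \ldots{} that agree in law coincide a.s.,'' which is false as phrased (agreement in law is not enough), but you immediately give the correct formulation -- that they are versions of \emph{each other} -- so the argument is sound.
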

\begin{proof}
(1) is exactly Proposition \ref{cty2}. For (2) we fix $t \in [0,1]$. We see that the space increment estimate is equivalent to
\begin{equation*}
\Ebb\left[ \Delta(\hat{\xi}_{tx},\hat{\xi}_{ty})^\beta \right] \leq C R(x,y)^{d_H + \beta\delta_1}.
\end{equation*}
Then by Theorem \ref{cty1} there exists a version $(\tilde{\xi}_x)_{x \in F}$ of $(\hat{\xi}_{tx})_{x \in F}$ which is almost surely essentially $\delta_1$-H\"{o}lder continuous with respect to $R$. Now using (1), $(\tilde{\xi}_x)_{x \in F}$ and $(\hat{\xi}_{tx})_{x \in F}$ are both almost surely continuous on the separable space $F$ (see for example $F_* \subseteq F$) so we must in fact have that $(\tilde{\xi}_x)_{x \in F} = (\hat{\xi}_{tx})_{x \in F}$ almost surely. We conclude that $(\hat{\xi}_{tx})_{x \in F}$ is almost surely essentially $\delta_1$-H\"{o}lder continuous with respect to $R$. The proof of (3) is conceptually identical --- we use the standard Kolmogorov continuity theorem for $[0,1]$.
\end{proof}
\begin{rem}
This time by taking $F = [0,1]$ as in Example \ref{interval} the above theorem reduces to the original Kolmogorov continuity theorem for $[0,1]^2$.
\end{rem}
\begin{cor}\label{cty4}
Theorem \ref{cty3} holds if the interval $[0,1]$ is replaced with $[0,T]$ for any $T > 0$.
\end{cor}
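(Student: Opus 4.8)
The plan is to reduce Corollary \ref{cty4} to Theorem \ref{cty3} by a simple linear time-rescaling argument, using the scale invariance of the class of estimates \eqref{ctyestim} under affine reparametrisation of the time variable. The key observation is that the spatial estimates involve no time variable and are therefore unaffected, while the temporal estimate only changes by a multiplicative constant depending on $T$.

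Concretely, given an $E$-valued process $\xi = (\xi_{tx} : (t,x) \in [0,T] \times F)$ satisfying the two estimates of \eqref{ctyestim} for all $s,t \in [0,T]$ and $x,y \in F$, I would define a new process $\eta = (\eta_{tx} : (t,x) \in [0,1] \times F)$ by $\eta_{tx} := \xi_{(Tt)x}$. Then for $s,t \in [0,1]$ and $x,y \in F$ one has $\Ebb[\Delta(\eta_{tx},\eta_{ty})^\beta] = \Ebb[\Delta(\xi_{(Tt)x},\xi_{(Tt)y})^\beta] \leq C R(x,y)^{d_H+1+\gamma}$ since $Tt \in [0,T]$, and $\Ebb[\Delta(\eta_{sx},\eta_{tx})^\beta] = \Ebb[\Delta(\xi_{(Ts)x},\xi_{(Tt)x})^\beta] \leq C|Ts-Tt|^{d_H+1+\gamma'} = CT^{d_H+1+\gamma'}|s-t|^{d_H+1+\gamma'}$. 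Thus $\eta$ satisfies \eqref{ctyestim} with $C$ replaced by the constant $C \vee (CT^{d_H+1+\gamma'})$, the same exponents, and time parameter ranging over $[0,1]$. Theorem \ref{cty3} then furnishes a version $\hat{\eta}$ of $\eta$ with the three stated H\"{o}lder properties on $([0,1] \times F, R_\infty)$.

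I would then transport $\hat{\eta}$ back by setting $\hat{\xi}_{tx} := \hat{\eta}_{(t/T)x}$ for $(t,x) \in [0,T] \times F$. This is a version of $\xi$, since $\hat{\eta}_{(t/T)x} = \eta_{(t/T)x} = \xi_{tx}$ almost surely for each fixed $(t,x)$. The map $t \mapsto t/T$ is a bi-Lipschitz homeomorphism of $[0,T]$ onto $[0,1]$, hence $(t,x) \mapsto (t/T, x)$ is bi-Lipschitz from $([0,T] \times F, R_\infty)$ to $([0,1]\times F, R_\infty)$; composing a (essentially) $\delta$-H\"{o}lder map with a Lipschitz map preserves essential $\delta$-H\"{o}lder continuity (only the H\"{o}lder constant changes, by a factor depending on $T$). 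Therefore $\hat{\xi}$ inherits: essential $\delta_0$-H\"{o}lder continuity in $(t,x)$ jointly, essential $\delta_1$-H\"{o}lder continuity in $x$ for each fixed $t \in [0,T]$ (equivalently for each fixed $t/T \in [0,1]$), and essential $\delta_2$-H\"{o}lder continuity in $t$ for each fixed $x$. The exponents $\delta_0, \delta_1, \delta_2$ are exactly those of Theorem \ref{cty3} and do not depend on $T$.

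There is no real obstacle here; the only points requiring a word of care are (i) checking that the constant in the rescaled estimate is still positive and finite — immediate since $T > 0$ — and (ii) noting that the definition of essential $\delta$-H\"{o}lder continuity (Definition \ref{essentially}) is manifestly stable under pre- and post-composition with bi-Lipschitz maps, so rescaling the time axis does not degrade any of the exponents. One could alternatively rerun the proof of Theorem \ref{cty3} verbatim with dyadic grids on $[0,T]$ instead of $[0,1]$, but the rescaling argument is shorter and exposes that the statement is genuinely scale-free in the obvious sense.
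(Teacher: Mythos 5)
Your proof is correct and follows exactly the same approach as the paper: a linear rescaling of the time coordinate to reduce to $[0,1]$, an application of Theorem \ref{cty3}, and transport back via the (bi-Lipschitz) inverse rescaling, noting that H\"older exponents are preserved. You simply spell out the rescaled estimates and constants more explicitly than the paper does.
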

\begin{proof}
We have that $\xi = (\xi_{tx}:(t,x) \in [0,T] \times F )$ is an $E$-valued process indexed by $[0,T] \times F$. By taking a linear rescaling $[0,T] \leftrightarrow [0,1]$ of the time coordinate we transform $\xi$ into an $E$-valued process indexed by $[0,1] \times F$ with the same exponents in the continuity estimates \eqref{ctyestim}. Then we use Theorem \ref{cty3} to construct a H\"{o}lder continuous version of the rescaled $\xi$. Finally we reverse the rescaling, which is linear so it preserves H\"{o}lder exponents.
\end{proof}

\section{Pointwise regularity}\label{ptwise}

Before we talk about H\"{o}lder continuity of the solution $u$ to \eqref{SPDE} we show that the point evaluations $u(t,x)$ for $(t,x) \in [0,\infty) \times F$ are indeed well-defined random variables. Recall from Proposition \ref{spectra} and the subsequent discussion that
\begin{equation*}
S^b_t(1-L_b)^{-\frac{\alpha}{2}}f = \sum_{k=1}^\infty f_k e^{-\lambda^b_kt}(1+ \lambda^b_k)^{-\frac{\alpha}{2}} \phi^b_k
\end{equation*}
for all $f \in \Hcal = \Lcal^2(F,\mu)$, where $f_k = \langle \phi^b_k,f \rangle_\mu$. Equivalently
\begin{equation*}
S^b_t(1-L_b)^{-\frac{\alpha}{2}} = \sum_{k=1}^\infty e^{-\lambda^b_kt}(1+ \lambda^b_k)^{-\frac{\alpha}{2}} \phi^b_k \phi^{b*}_k
\end{equation*}
where $\phi^{b*}_k \in \Hcal^*$ is the bounded linear functional $f \mapsto \langle \phi^b_k,f \rangle_\mu$. By Proposition \ref{spectra} we have that
\begin{equation*}
\begin{split}
\sum_{k=1}^\infty (1+ \lambda^b_k)^{-\alpha} \int_0^t \Vert e^{-\lambda^b_k(t-s)} \phi^{b*}_k \Vert_{\HS}^2 ds = \sum_{k=1}^\infty \frac{1 - e^{-2\lambda^b_kt}}{2\lambda^b_k(1+ \lambda^b_k)^{\alpha}} \leq C \sum_{k=1}^\infty k^{-\frac{2}{d_s}} < \infty,
\end{split}
\end{equation*}
so it follows from It\={o}'s isometry for $\Hcal$-valued stochastic integrals that
\begin{equation*}
\begin{split}
W^b_\alpha(t) &:= \int_0^t S^b_{t-s}(1-L_b)^{-\frac{\alpha}{2}}dW(s)\\
&= \sum_{k=1}^\infty \int_0^t e^{-\lambda^b_k(t-s)} \phi^{b*}_k dW(s) (1+ \lambda^b_k)^{-\frac{\alpha}{2}} \phi^b_k.
\end{split}
\end{equation*}
For each $k \geq 1$ define the real-valued stochastic process $X^{b,k} = (X^{b,k}_t)_{t \geq 0}$ by
\begin{equation}\label{OUproc}
X^{b,k}_t = \int_0^t e^{-\lambda^b_k(t-s)} \phi^{b*}_k dW(s)
\end{equation}
so we have the series representation
\begin{equation}\label{Wseries}
W^b_\alpha(t) = \sum_{k=1}^\infty (1+ \lambda^b_k)^{-\frac{\alpha}{2}} X^{b,k}_t \phi^b_k.
\end{equation}
Evidently $X^{b,k}$ is a centred real continuous Gaussian process. We compute its covariance to be
\begin{equation*}
\Ebb \left[ X^{b,k}_tX^{b,k}_{t+s} \right] = \frac{e^{-\lambda^b_ks}}{2\lambda^b_k}(1 - e^{-2\lambda^b_kt})
\end{equation*}
if $\lambda^b_k > 0$ and we identify $X^{b,k}$ to be a centred Ornstein-Uhlenbeck process with unit volatility and rate parameter $\lambda^b_k$. If $\lambda^b_k = 0$ then $X^{b,k}$ is simply a standard Wiener process. It is easy to check that the family $(X^{b,k})_{k=1}^\infty$ is independent.
\begin{rem}
We give an alternative view on the series representation \eqref{Wseries}. Let $u$ be the solution to \eqref{SPDE} in the case $u_0 = 0$, so that $u = W^b_\alpha$. We take an eigenfunction expansion of \eqref{SPDE}:
\begin{equation}\label{SPDEexp}
\begin{split}
d\hat{u}(t,k) &= -\lambda^b_k \hat{u}(t,k)dt + (1+ \lambda^b_k)^{-\frac{\alpha}{2}}\phi^{b*}_k dW(t),\\
\hat{u}(0,k) &= 0
\end{split}
\end{equation}
for each $k \in \Nbb$, where $\hat{u}(\cdot,k) := \langle \phi^b_k, u(\cdot) \rangle_\mu$ is a real-valued process. This is analogous to using Fourier methods to solve differential equations on $\Rbb^n$. Now using standard theory we see that $\{ \phi^{b*}_k W \}_{k=1}^\infty$ is a family of independent real-valued standard Wiener processes. It follows that \eqref{SPDEexp} is just a family of decoupled one-dimensional SDEs, and the solution to the $k$th SDE can be found to be exactly $(1+ \lambda^b_k)^{-\frac{\alpha}{2}} X^{b,k}$.
\end{rem}

\subsection{Resolvent density}
\begin{defn}
If $\lambda > 0$ then $\Dcal$ can be equipped with the inner product
\begin{equation*}
\langle f,g \rangle_\lambda := \Ecal(f,g) + \lambda \langle f,g \rangle_\mu.
\end{equation*}
Since $\Ecal$ is a closed form, this turns $\Dcal$ into a Hilbert space which we denote $\Dcal^\lambda$. Observe that the evaluation maps $\{f \mapsto f(x):\ x \in F\}$ are continuous linear functionals on $\Dcal^\lambda$, by \cite[Proposition 7.16(b)]{barlow1998}. We have that $\Dcal_0$ is the intersection of the kernels of $\{f \mapsto f(x):\ x \in F^0 \}$ so it must be closed with respect to $\langle \cdot , \cdot \rangle_\lambda$.
\end{defn}
\begin{defn}
For $\lambda > 0$ and $b \in \{ N,D \}$ let $\rho^b_\lambda: F \times F \to \Rbb$ be the \textit{resolvent density} associated with $L_b$. By \cite[Theorem 7.20]{barlow1998}, $\rho^N_\lambda$ exists and satisfies the following:
\begin{enumerate}
\item (Reproducing kernel property.) For $x \in F$, $\rho^N_\lambda(x,\cdot)$ is the unique element of $\Dcal$ such that
\begin{equation*}
\langle \rho^N_\lambda(x,\cdot), f \rangle_\lambda = f(x)
\end{equation*}
for all $f \in \Dcal$.
\item (Resolvent kernel property.) For all continuous $f \in \Hcal$ and all $x \in F$,
\begin{equation*}
\int_0^\infty e^{-\lambda t} S^N_tf(x) dt = \int_F \rho^N_\lambda(x,y)f(y)\mu(dy).
\end{equation*}
By a density argument it follows that for all $f \in \Hcal$,
\begin{equation*}
\int_0^\infty e^{-\lambda t} S^N_tf dt = \int_F \rho^N_\lambda(\cdot,y)f(y)\mu(dy).
\end{equation*}
\item $\rho^N_\lambda$ is non-negative (easy to see from (2) and fact that $S^N_tf(x) = \Ebb^x[f(X^N_t)]$), symmetric and bounded. We define (for now) $c_7(\lambda) > 0$ such that
\begin{equation*}
c_7(\lambda) \geq \sup_{x,y \in F}\rho^N_\lambda(x,y).
\end{equation*}
\item (H\"{o}lder continuity.) For this same constant $c_7(\lambda)$ we have that for all $x,y,y' \in F$,
\begin{equation*}
|\rho^N_\lambda(x,y) - \rho^N_\lambda(x,y')|^2 \leq c_7(\lambda) R(y,y').
\end{equation*}
Using symmetry this H\"{o}lder continuity result holds in the first argument as well.
\end{enumerate}
By an identical argument to \cite[Theorem 7.20]{barlow1998}, $\rho^D_\lambda$ exists and satisfies the analogous results with $(\Ecal,\Dcal_0)$ and $S^D$. By the reproducing kernel property it follows that for every $x \in F$, $\rho^D_\lambda(x,\cdot)$ must be the $\Dcal^\lambda$-orthogonal projection of $\rho^N_\lambda(x,\cdot)$ onto $\Dcal_0$. We now choose $c_7(\lambda)$ large enough that it does not depend on the value of $b \in \{ N,D \}$ for (3) and (4).
\end{defn}
The H\"{o}lder continuity property of the resolvent densities $\rho^b_\lambda$ described above is the subject of this section. We seek to strengthen it into Lipschitz continuity. 
\begin{defn}
Let $B \subseteq F$ be closed. Let $g_B: F \times F \to \Rbb$ be the \textit{Green function} on $F$ with boundary $B$, see \cite[Chapter 4]{kigami2012}.
\end{defn}
The properties of the Green function that we require are given in \cite[Theorem 4.1]{kigami2012}. Note in particular that every Green function is symmetric and uniformly Lipschitz in $(F,R)$; this is the main tool of our proof.
\begin{prop}[Lipschitz resolvent]\label{resolvLip}
For $\lambda > 0$ and $b \in \{ N,D \}$, if $x,y,y' \in F$ then
\begin{equation*}
\left\vert \rho^b_\lambda(x,y) - \rho^b_\lambda(x,y') \right\vert \leq 2 R(y,y').
\end{equation*}
\end{prop}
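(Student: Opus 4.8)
The plan is to exploit the relationship between the resolvent density $\rho^b_\lambda$ and Green functions $g_B$, using the Lipschitz property of the latter (from \cite[Theorem 4.1]{kigami2012}) as the workhorse. First, recall that the resolvent density satisfies the reproducing kernel property in $(\Dcal^\lambda, \langle \cdot,\cdot\rangle_\lambda)$, and note that the inner product $\langle f,g\rangle_\lambda = \Ecal(f,g) + \lambda\langle f,g\rangle_\mu$ differs from $\Ecal$ only by the perturbation term $\lambda\langle f,g\rangle_\mu$. The idea is that $\rho^b_\lambda(x,\cdot)$ can be written in terms of a Green function plus a correction involving the resolvent applied to itself. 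Concretely, using that $-L_b$ is the operator associated with $\Ecal$ restricted to the appropriate domain, one has the resolvent identity $\rho^b_\lambda(x,\cdot) = g_B(x,\cdot) - \lambda \int_F g_B(\cdot,z)\rho^b_\lambda(x,z)\mu(dz)$ (or an analogous expansion), where $B = F^0$ for $b = D$ and one takes the appropriate Neumann-type Green function for $b = N$. Then the Lipschitz estimate for $\rho^b_\lambda$ transfers from the uniform Lipschitz constant of $g_B$ provided the correction term is controlled.

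The key steps, in order, would be: (i) express $\rho^b_\lambda$ via the Green function of the Dirichlet form using the resolvent formula $(\lambda - L_b)^{-1}$, identifying the Green function $g_B$ with the density of $(-L_b)^{-1}$ on the appropriate subspace; (ii) extract from \cite[Theorem 4.1]{kigami2012} the explicit uniform Lipschitz constant of $g_B$ — call it $L_g$ — so that $|g_B(z,y) - g_B(z,y')| \le L_g R(y,y')$ for all $z$; (iii) plug this into the resolvent expansion and bound the correction term $\lambda\int_F |g_B(y,z) - g_B(y',z)|\rho^b_\lambda(x,z)\mu(dz) \le \lambda L_g R(y,y') \int_F \rho^b_\lambda(x,z)\mu(dz)$, using non-negativity of $\rho^b_\lambda$; (iv) observe that $\int_F \rho^b_\lambda(x,z)\mu(dz) = \int_0^\infty e^{-\lambda t} S^b_t \mathbf{1}(x)\, dt \le \int_0^\infty e^{-\lambda t}\,dt = \lambda^{-1}$ by the resolvent kernel property and the fact that $S^b_t$ is a sub-Markovian contraction (so $S^b_t\mathbf{1} \le 1$); (v) combine to get $|\rho^b_\lambda(x,y) - \rho^b_\lambda(x,y')| \le L_g R(y,y') + L_g R(y,y') = 2L_g R(y,y')$, and finally check that the normalisation in \cite{kigami2012} gives $L_g \le 1$, yielding the constant $2$.

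The main obstacle I expect is matching conventions and getting the constant exactly right. The statement claims the clean bound $2R(y,y')$, which strongly suggests that the relevant Green function has Lipschitz constant exactly $1$ in the resistance metric (indeed $g_B(x,\cdot)$ should satisfy $\Ecal(g_B(x,\cdot), g_B(x,\cdot)) = g_B(x,x) \le$ something controlled, and the $\frac12$-Hölder-to-Lipschitz upgrade for Green functions in \cite{kigami2012} typically comes with constant $1$). Getting the resolvent expansion to produce precisely two copies of this constant — one from the leading Green-function term, one from the first-order correction, with the infinite Neumann series for the correction telescoping or being dominated appropriately — will require care. An alternative, possibly cleaner route avoiding the series: directly use that $\rho^b_\lambda(x,\cdot)$ is the $\Dcal^\lambda$-projection characterisation together with the identity $\rho^b_\lambda = g - \lambda g \rho^b_\lambda$ in operator form on $\Dcal$, treating $g$ as the bounded operator with kernel $g_B$; since $\|g\|_{\Dcal^\lambda}$-type bounds follow from $\Ecal$-orthogonality, one shows the map $y \mapsto \rho^b_\lambda(x,y)$ inherits the Lipschitz bound with the stated constant. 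I would verify the $b=N$ case separately if the Neumann Green function is not literally covered by \cite{kigami2012}, using instead that $\rho^N_\lambda(x,\cdot)$ and $\rho^D_\lambda(x,\cdot)$ differ by a harmonic function (the difference is $\Dcal^\lambda$-orthogonal to $\Dcal_0$), and harmonic functions on a p.c.f.s.s. set are themselves uniformly Lipschitz in $R$ with a controllable constant, so the bound is preserved for both $b$.
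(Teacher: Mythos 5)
Your main argument is essentially the paper's proof. Steps (i)--(v) match the paper's derivation: write $\rho^D_\lambda(x,y) = \Ecal(\rho^D_\lambda(x,\cdot), g_D(y,\cdot)) = g_D(y,x) - \lambda\langle\rho^D_\lambda(x,\cdot), g_D(y,\cdot)\rangle_\mu$ using the reproducing kernel property of $\rho^D_\lambda$ in $\Dcal^\lambda$ and the Green reproducing property of $g_D = g_{F^0}$; then apply the Lipschitz estimate $|g_D(y,z) - g_D(y',z)| \le R(y,y')$ from \cite[Theorem 4.1]{kigami2012}, non-negativity of the resolvent density, and the bound $\int_F \rho^b_\lambda(x,z)\,\mu(dz) \le \lambda^{-1}$ from the sub-Markovian property of $S^b$. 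This yields the constant $1 + 1 = 2$ exactly as you anticipated. The paper does not need an infinite Neumann series expansion; the single identity above suffices, so your worry about telescoping is moot.

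The one place your proposal would need repair is the Neumann case. Your primary suggestion --- ``take the appropriate Neumann-type Green function'' --- is vague, and what the paper actually does is fix an arbitrary point $x_0 \in F$, use $g_{\{x_0\}}$ (Green function with the singleton boundary $\{x_0\}$), and derive
$\rho^N_\lambda(x,y) - \rho^N_\lambda(x,x_0) = g_{\{x_0\}}(y,x) - \lambda\langle\rho^N_\lambda(x,\cdot), g_{\{x_0\}}(y,\cdot)\rangle_\mu$,
after which the same estimate goes through (the constant $\rho^N_\lambda(x,x_0)$ cancels when you subtract the $y'$ version). Your fallback suggestion for $b = N$ does not work as stated: $\rho^D_\lambda(x,\cdot)$ is the $\Dcal^\lambda$-orthogonal projection of $\rho^N_\lambda(x,\cdot)$ onto $\Dcal_0$, so the difference $\rho^N_\lambda(x,\cdot) - \rho^D_\lambda(x,\cdot)$ is $\langle\cdot,\cdot\rangle_\lambda$-orthogonal to $\Dcal_0$, not $\Ecal$-orthogonal to it. That makes it $\lambda$-harmonic (a solution of $L_N h = \lambda h$ off $F^0$), not harmonic, and you do not have a ready-made uniform Lipschitz bound for $\lambda$-harmonic functions with a $\lambda$-independent constant. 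The $g_{\{x_0\}}$ device sidesteps this cleanly, so adopt that instead of the harmonic-difference fallback.
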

\begin{proof}
Let $\textbf{1} \in \Hcal$ be the constant function taking the value $1$. First of all, observe that for all $\lambda > 0$, $x \in F$ and $b \in \{ N,D \}$,
\begin{equation*}
\langle \rho^b_\lambda(x,\cdot) , \textbf{1} \rangle_\mu = \int_0^\infty e^{-\lambda t} S^b_t \textbf{1}(x) dt = \int_0^\infty e^{-\lambda t} \Pbb^x [X^b_t \in F] dt \leq \frac{1}{\lambda}.
\end{equation*}

Fix $\lambda > 0$. We prove the result for $b = D$ first. Let $g_D = g_{F^0}$, the Green function associated with Dirichlet boundary conditions. Then for $x,y \in F$,
\begin{equation*}
\rho^D_\lambda(x,y) = \Ecal\left( \rho^D_\lambda(x,\cdot), g_D(y,\cdot) \right) = g_D(y,x) - \lambda \langle \rho^D_\lambda(x,\cdot), g_D(y,\cdot) \rangle_\mu.
\end{equation*}
So if $x,y,y' \in F$ then by \cite[Theorem 4.1]{kigami2012} and the non-negativity of the resolvent density,
\begin{equation*}
\begin{split}
\rho^D_\lambda(x,y) - \rho^D_\lambda(x,y') &\leq |g_D(y,x) - g_D(y',x)| + \lambda \int_F \rho^D_\lambda(x,z) |g_D(y,z) - g_D(y',z)| \mu(dz)\\
&\leq R(y,y')\left( 1 + \lambda \int_F \rho^D_\lambda(x,z) \textbf{1}(z) \mu(dz) \right)\\
&\leq 2R(y,y').
\end{split}
\end{equation*}
Doing the same estimate with $y,y'$ interchanged gives the required result. Now for the case $b = N$, fix $x_0 \in F$ an arbitrary point. We see that
\begin{equation*}
\rho^N_\lambda(x,y) - \rho^N_\lambda(x,x_0) = \Ecal\left( \rho^N_\lambda(x,\cdot), g_{\{x_0\}}(y,\cdot) \right) = g_{\{x_0\}}(y,x) - \lambda \langle \rho^N_\lambda(x,\cdot), g_{\{x_0\}}(y,\cdot) \rangle_\mu,
\end{equation*}
and the rest of the proof is identical to the $b = D$ case.
\end{proof}
As before, by the symmetry of $\rho^b_\lambda$ the above Lipschitz continuity property in fact holds in both of its arguments.

\subsection{Pointwise regularity of solution}
We return to the SPDE \eqref{SPDE}. The next lemma is based on an argument in \cite[Section 7.2]{foondun2011}.
\begin{lem}\label{resolvestim}
Let $u: [0,\infty) \to \Hcal$ be the solution to \eqref{SPDE} with initial condition $u_0 = 0$. If $g \in \Hcal$ and $t \in [0,\infty)$ then
\begin{equation*}
\Ebb \left[ \langle u(t),g \rangle_\mu^2 \right] \leq \frac{e^{2t}}{2} \int_F \int_F \rho^b_1(x,y)g(x)g(y)\mu(dx)\mu(dy).
\end{equation*}
\end{lem}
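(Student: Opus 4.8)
The plan is to use the series representation \eqref{Wseries}, compute the second moment of $\langle u(t),g\rangle_\mu$ term by term using the independence and known covariance structure of the processes $X^{b,k}$, and then recognise the resulting sum as a quadratic form built from the resolvent density $\rho^b_1$. First I would write $g = \sum_{k=1}^\infty g_k\phi^b_k$ with $g_k = \langle \phi^b_k,g\rangle_\mu$, so that by \eqref{Wseries} and orthonormality of $(\phi^b_k)$,
\begin{equation*}
\langle u(t),g \rangle_\mu = \sum_{k=1}^\infty (1+\lambda^b_k)^{-\frac{\alpha}{2}} X^{b,k}_t g_k .
\end{equation*}
Since the family $(X^{b,k})_{k\geq 1}$ is independent and each $X^{b,k}$ is centred, the cross terms vanish in expectation and
\begin{equation*}
\Ebb\left[ \langle u(t),g \rangle_\mu^2 \right] = \sum_{k=1}^\infty (1+\lambda^b_k)^{-\alpha}\, \Ebb\big[(X^{b,k}_t)^2\big]\, g_k^2 .
\end{equation*}

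Next I would bound $\Ebb[(X^{b,k}_t)^2]$. From the covariance computed just before the lemma, $\Ebb[(X^{b,k}_t)^2] = \frac{1-e^{-2\lambda^b_k t}}{2\lambda^b_k}$ when $\lambda^b_k>0$ (and $=t$ when $\lambda^b_k=0$); in either case the elementary inequality $\frac{1-e^{-2\lambda t}}{2\lambda}\leq \frac{e^{2t}}{2(1+\lambda)}$ holds for all $\lambda\geq 0$ and $t\geq 0$ — indeed $1-e^{-2\lambda t}\leq 1 \leq e^{2t}$ and $\frac{1}{2\lambda}\leq \frac{1}{1+\lambda}\cdot\frac{1+\lambda}{2\lambda}$ needs a little care, so I would instead argue $\frac{1-e^{-2\lambda t}}{2\lambda}\leq \min\{t, \frac1{2\lambda}\}$ and check $\min\{t,\frac{1}{2\lambda}\}\leq \frac{e^{2t}}{2(1+\lambda)}$ by splitting on whether $\lambda\leq 1$ or $\lambda\geq 1$. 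Granting this, since $\alpha\geq 0$ we have $(1+\lambda^b_k)^{-\alpha}\leq 1$, hence
\begin{equation*}
\Ebb\left[ \langle u(t),g \rangle_\mu^2 \right] \leq \frac{e^{2t}}{2} \sum_{k=1}^\infty \frac{g_k^2}{1+\lambda^b_k}.
\end{equation*}

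Finally I would identify $\sum_k \frac{g_k^2}{1+\lambda^b_k}$ with the resolvent quadratic form. The operator $(1-L_b)^{-1}$ has spectral representation $f\mapsto \sum_k (1+\lambda^b_k)^{-1}f_k\phi^b_k$, and by the resolvent kernel property of $\rho^b_1$ (item (2) in the definition of the resolvent density, with $\lambda=1$), $(1-L_b)^{-1}g = \int_F \rho^b_1(\cdot,y)g(y)\mu(dy)$. Therefore
\begin{equation*}
\sum_{k=1}^\infty \frac{g_k^2}{1+\lambda^b_k} = \langle (1-L_b)^{-1}g, g \rangle_\mu = \int_F\int_F \rho^b_1(x,y)g(x)g(y)\,\mu(dx)\,\mu(dy),
\end{equation*}
which combined with the previous display gives the claim. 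The only genuinely delicate point is justifying the interchange of expectation and infinite sum (Tonelli suffices, since all terms are non-negative) and the scalar inequality $\frac{1-e^{-2\lambda t}}{2\lambda}\leq \frac{e^{2t}}{2(1+\lambda)}$; everything else is bookkeeping with the spectral decomposition already set up in the excerpt. I expect the case analysis for that elementary inequality to be the main (minor) obstacle, and one should be slightly careful that the double integral of $\rho^b_1\cdot g\otimes g$ is well-defined, which follows from boundedness of $\rho^b_1$ (item (3)) and $g\in\Hcal\subseteq\Lcal^1(F,\mu)$.
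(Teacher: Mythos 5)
Your proof is correct and, at its core, is the spectral-decomposition version of the argument the paper uses. The paper applies It\^o's isometry at the operator level to get $\Ebb[\langle u(t),g\rangle_\mu^2]=\int_0^t\Vert(1-L_b)^{-\alpha/2}S^b_sg\Vert_\mu^2\,ds$, drops the $(1-L_b)^{-\alpha/2}$ factor as a contraction, and then bounds $\int_0^t\leq e^{2t}\int_0^\infty e^{-2s}(\cdot)\,ds$ to recognise the resolvent $(1-L_b)^{-1}$; you expand everything in the eigenbasis and do the same three steps term by term, with $\Ebb[(X^{b,k}_t)^2]=\int_0^te^{-2\lambda^b_k s}\,ds$ playing the role of $\int_0^t\Vert S^b_s\phi^b_k\Vert_\mu^2\,ds$. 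The only place you make extra work for yourself is the scalar inequality $\frac{1-e^{-2\lambda t}}{2\lambda}\leq\frac{e^{2t}}{2(1+\lambda)}$: rather than a case analysis on $\lambda$, it drops out in one line by writing $\int_0^te^{-2\lambda s}\,ds\leq\int_0^te^{2(t-s)}e^{-2\lambda s}\,ds\leq e^{2t}\int_0^\infty e^{-2(1+\lambda)s}\,ds=\frac{e^{2t}}{2(1+\lambda)}$, which is exactly the paper's inequality chain specialised to a single eigenvalue. (Also, the case split you sketch is really on the relative size of $t$ and $\frac{1}{2\lambda}$, not on $\lambda\lessgtr1$, though both can be made to work.) Your final identification $\sum_k\frac{g_k^2}{1+\lambda^b_k}=\langle(1-L_b)^{-1}g,g\rangle_\mu=\iint\rho^b_1\,g\otimes g\,d\mu\,d\mu$ and the justification of the expectation/sum interchange are both fine. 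Net: correct, same underlying idea, slightly less economical at the scalar step.
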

\begin{proof}
Let $g^* \in \Hcal^*$ be the bounded linear functional $f \mapsto \langle f,g \rangle_\mu$. We see by It\={o}'s isometry that
\begin{equation*}
\begin{split}
\Ebb \left[ \langle u(t),g \rangle_\mu^2 \right] &= \Ebb \left[ g^*(u(t))^2 \right]\\
&= \int_0^t \Vert g^* (1-L_b)^{-\frac{\alpha}{2}}S^b_s \Vert_{\HS}^2 ds\\
&= \int_0^t \Vert (1-L_b)^{-\frac{\alpha}{2}}S^b_s g \Vert_\mu^2 ds\\
\end{split}
\end{equation*}
where the last equality is a result of the self-adjointness of the operator $(1-L_b)^{-\frac{\alpha}{2}}S^b_s$. We know from the functional calculus for self-adjoint operators that $\Vert (1-L_b)^{-\frac{\alpha}{2}} \Vert \leq 1$ so
\begin{equation*}
\begin{split}
\Ebb \left[ \langle u(t),g \rangle_\mu^2 \right] &\leq \int_0^t \Vert S^b_s g \Vert_\mu^2 ds\\
&\leq e^{2t} \int_0^\infty e^{-2s} \Vert S^b_s g \Vert_\mu^2 ds\\
&= e^{2t} \left\langle \int_0^\infty e^{-2s} S^b_{2s}g ds , g \right\rangle_\mu\\
&= \frac{e^{2t}}{2} \left\langle \int_F \rho^b_1(\cdot,y) g(y) \mu(dy) , g \right\rangle_\mu\\
&= \frac{e^{2t}}{2} \int_F \int_F \rho^b_1(x,y)g(x)g(y)\mu(dx)\mu(dy).
\end{split}
\end{equation*}
\end{proof}
\begin{defn}
For $x \in F$ and $n \geq 0$, define
\begin{equation*}
f^x_n = \mu(D^0_n(x))^{-1} \1bb_{D^0_n(x)},
\end{equation*}
see \cite[Section 7.2]{foondun2011}.
\end{defn}
Evidently $f^x_n \in \Hcal$, $\Vert f^x_n \Vert_\mu^2 = \mu(D^0_n(x))^{-1} < r_{\min}^{-d_H} 2^{d_Hn}$ (by the definition of $d_H$ and the comment in Definition \ref{partitions}) and if $g \in \Hcal$ is continuous then
\begin{equation*}
\lim_{n \to \infty}\langle f^x_n,g \rangle_\mu = g(x),
\end{equation*}
by Proposition \ref{nhoodestim}. We can now state and prove the main theorem of this section.
\begin{thm}[Pointwise regularity]\label{ptreg}
Let $u: [0,\infty) \to \Hcal$ be the solution to the SPDE \eqref{SPDE} with initial value $u_0 = 0$. Then for all $(t,x) \in [0,\infty) \times F$ the expression
\begin{equation*}
u(t,x) := \sum_{k=1}^\infty (1+ \lambda^b_k)^{-\frac{\alpha}{2}} X^{b,k}_t \phi^b_k(x)
\end{equation*}
is a well-defined real-valued centred Gaussian random variable. There exists a constant $c_8 > 0$ such that for all $x \in F$, $t \in [0,\infty)$ and $n \geq 0$ we have that
\begin{equation*}
\Ebb \left[ \left( \langle u(t), f^x_n \rangle_\mu - u(t,x) \right)^2 \right] \leq c_8e^{2t} 2^{-n}.
\end{equation*}
\end{thm}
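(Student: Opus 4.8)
The plan is to reduce everything to the $L^2(\Omega)$-convergent series \eqref{Wseries} together with the reproducing-kernel structure of $\rho^b_1$ developed above. The one elementary estimate used repeatedly is
\[
\Ebb\bigl[(X^{b,k}_t)^2\bigr] \;=\; \int_0^t e^{-2\lambda^b_k s}\,ds \;\le\; e^{2t}\int_0^\infty e^{-2(1+\lambda^b_k)s}\,ds \;=\; \frac{e^{2t}}{2(1+\lambda^b_k)},
\]
valid for every $k\ge1$ and $t\ge0$ (the left-hand side read as $t$ when $\lambda^b_k=0$), which holds because $e^{-2\lambda^b_k s}\le e^{2(t-s)}e^{-2\lambda^b_k s}$ on $[0,t]$.

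For well-definedness I would first apply the continuous functional $\langle\,\cdot\,,g\rangle_\mu$ to \eqref{Wseries} to get, for every $g\in\Hcal$, that $\langle u(t),g\rangle_\mu=\sum_{k\ge1}(1+\lambda^b_k)^{-\alpha/2}X^{b,k}_t\langle\phi^b_k,g\rangle_\mu$ in $L^2(\Omega)$. The partial sums $\sum_{k=1}^N(1+\lambda^b_k)^{-\alpha/2}X^{b,k}_t\phi^b_k(x)$ are centred Gaussian, and since the $X^{b,k}$ are independent their variances are $\sum_{k=1}^N(1+\lambda^b_k)^{-\alpha}\phi^b_k(x)^2\Ebb[(X^{b,k}_t)^2]$, which by the displayed bound and $\alpha\ge0$ are at most $\tfrac{e^{2t}}{2}\sum_{k=1}^N\phi^b_k(x)^2/(1+\lambda^b_k)$. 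By the reproducing-kernel property, $\sum_{k\ge1}\phi^b_k(x)^2/(1+\lambda^b_k)=\langle\rho^b_1(x,\cdot),\rho^b_1(x,\cdot)\rangle_1=\rho^b_1(x,x)\le c_7(1)$, so these variances are bounded uniformly in $N$ and, being nondecreasing, converge; hence the partial sums form a Cauchy sequence in $L^2(\Omega)$. Their limit is the asserted random variable $u(t,x)$, which is therefore well-defined and, as an $L^2$-limit of centred Gaussians, centred Gaussian.

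For the quantitative estimate, $\langle u(t),f^x_n\rangle_\mu-u(t,x)$ equals the $L^2(\Omega)$-convergent series $\sum_{k\ge1}(1+\lambda^b_k)^{-\alpha/2}X^{b,k}_t\bigl(\langle\phi^b_k,f^x_n\rangle_\mu-\phi^b_k(x)\bigr)$, so by independence and the displayed bound,
\[
\Ebb\Bigl[\bigl(\langle u(t),f^x_n\rangle_\mu-u(t,x)\bigr)^2\Bigr] \;=\; \sum_{k\ge1}\frac{\bigl(\langle\phi^b_k,f^x_n\rangle_\mu-\phi^b_k(x)\bigr)^2}{(1+\lambda^b_k)^\alpha}\,\Ebb[(X^{b,k}_t)^2] \;\le\; \frac{e^{2t}}{2}\sum_{k\ge1}\frac{\bigl(\langle\phi^b_k,f^x_n\rangle_\mu-\phi^b_k(x)\bigr)^2}{1+\lambda^b_k}.
\]
A short computation with the expansions $\rho^b_1(x,\cdot)=\sum_k(1+\lambda^b_k)^{-1}\phi^b_k(x)\phi^b_k$ and $\int_F\rho^b_1(\cdot,y)f^x_n(y)\,\mu(dy)=\sum_k(1+\lambda^b_k)^{-1}\langle\phi^b_k,f^x_n\rangle_\mu\phi^b_k$ shows that the last sum equals $\|h_n\|_1^2$, where $h_n:=\int_F\rho^b_1(\cdot,y)f^x_n(y)\,\mu(dy)-\rho^b_1(x,\cdot)\in\Dcal$. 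By the resolvent- and reproducing-kernel properties, $\langle h_n,f\rangle_1=\langle f,f^x_n\rangle_\mu-f(x)$ for every $f\in\Dcal$; testing with $f=h_n$ and recalling $f^x_n=\mu(D^0_n(x))^{-1}\1bb_{D^0_n(x)}$,
\[
\|h_n\|_1^2 \;=\; \frac1{\mu(D^0_n(x))}\int_{D^0_n(x)}\bigl(h_n(y)-h_n(x)\bigr)\,\mu(dy) \;\le\; \sup_{y\in D^0_n(x)}\Ecal(h_n,h_n)^{1/2}R(x,y)^{1/2} \;\le\; \|h_n\|_1\,(c_6 2^{-n})^{1/2},
\]
using the $\tfrac12$-H\"older estimate $|h_n(x)-h_n(y)|^2\le\Ecal(h_n,h_n)R(x,y)$ and $D^0_n(x)\subseteq D^1_n(x)\subseteq B(x,c_6 2^{-n})$ from Proposition \ref{nhoodestim}. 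Hence $\|h_n\|_1^2\le c_6 2^{-n}$, and feeding this into the previous display gives the claim with $c_8=c_6/2$.

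I expect the main obstacle to be the step of identifying $\sum_k\bigl(\langle\phi^b_k,f^x_n\rangle_\mu-\phi^b_k(x)\bigr)^2/(1+\lambda^b_k)$ as the squared $\Dcal^1$-norm of $h_n$ and then bounding that norm: a pointwise (supremum-norm) Lipschitz bound on $h_n$ via Proposition \ref{resolvLip} alone does not control the energy $\Ecal(h_n,h_n)$, so one is forced through the variational characterisation $\|h_n\|_1=\sup\{\,|\langle f,f^x_n\rangle_\mu-f(x)|:\|f\|_1\le1\,\}$ (equivalently, testing against $h_n$ itself), after which the H\"older bound for $\Dcal$-functions closes the estimate. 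Everything else is routine bookkeeping with independent Gaussians.
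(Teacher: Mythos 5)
Your proof is correct, and it takes a genuinely different route from the paper's. The paper establishes the estimate by first applying Lemma~\ref{resolvestim} with $g=f^x_n-f^x_m$, then invoking the Lipschitz bound on the resolvent density (Proposition~\ref{resolvLip}) together with Proposition~\ref{nhoodestim} to show that $(\langle u(t),f^x_n\rangle_\mu)_n$ is Cauchy in $\Lcal^2(\Pbb)$, and finally identifying the limit with $u(t,x)$ via $\ell^2$-completeness and Fatou's lemma. You instead recognise the key quantity $\sum_k(1+\lambda^b_k)^{-1}\bigl(\langle\phi^b_k,f^x_n\rangle_\mu-\phi^b_k(x)\bigr)^2$ as $\|h_n\|_1^2$ with $h_n=\int_F\rho^b_1(\cdot,y)f^x_n(y)\mu(dy)-\rho^b_1(x,\cdot)\in\Dcal$, then exploit the reproducing-kernel identity $\langle h_n,f\rangle_1=\langle f,f^x_n\rangle_\mu-f(x)$ and the general $\tfrac12$-H\"older embedding $|f(x)-f(y)|^2\leq\Ecal(f,f)R(x,y)$ for $\Dcal$-functions, closing with the bootstrap $\|h_n\|_1^2\leq\|h_n\|_1(c_62^{-n})^{1/2}$. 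Your route is more self-contained in that it bypasses Proposition~\ref{resolvLip} (and hence the Green-function machinery), and it gives well-definedness directly from the pointwise variance bound $\sum_k\phi^b_k(x)^2/(1+\lambda^b_k)=\rho^b_1(x,x)<\infty$ rather than through the Cauchy/Fatou argument; what the paper's approach buys is a cleaner modularity, since Lemma~\ref{resolvestim} and Proposition~\ref{resolvLip} are reused verbatim for the spatial estimate in Proposition~\ref{spaceestim2}. One small point worth recording explicitly: in the case $b=D$ the reproducing-kernel identity holds for $f\in\Dcal_0$, so you need $h_n\in\Dcal_0$ to test with $f=h_n$ --- this is true because each $\phi^D_k\in\Dcal(L_D)\subset\Dcal_0$ and $\Dcal_0$ is closed in $\Dcal^1$, but it should be said.
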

\begin{proof}
Note that $\phi^b_k \in \Dcal(L_b)$ for each $k$, so $\phi^b_k$ is continuous and so $\phi^b_k(x)$ is well-defined. By the definition of $u(t,x)$ as a sum of real-valued centred Gaussian random variables we need only prove that it is square-integrable and that the approximation estimate holds. Let $x \in F$. The theorem is trivial for $t = 0$ so let $t \in (0,\infty)$. By Lemma \ref{resolvestim} we have that
\begin{equation*}
\begin{split}
\Ebb &\left[ \langle u(t),f^x_n - f^x_m \rangle_\mu^2 \right]\\
&\leq \frac{e^{2t}}{2} \int_F \int_F \rho^b_1(z_1,z_2)(f^x_n(z_1)-f^x_m(z_1))(f^x_n(z_2)-f^x_m(z_2))\mu(dz_1)\mu(dz_2).
\end{split}
\end{equation*}
Then using the definition of $f^x_n$, Proposition \ref{resolvLip} and Proposition \ref{nhoodestim} we have that
\begin{equation}\label{cauchyseq}
\begin{split}
\Ebb \left[ \langle u(t),f^x_n - f^x_m \rangle_\mu^2 \right] &\leq \frac{e^{2t}}{2} \left( 4c_62^{-n} + 4c_62^{-m} \right)\\
&= 2e^{2t}c_6 \left( 2^{-n} + 2^{-m} \right).
\end{split}
\end{equation}
Writing $u$ in its series representation \eqref{Wseries} and using the independence of the $X^{b,k}$ and the fact that $\sum_{k=1}^\infty \Ebb \left[(X^{b,k}_t)^2 \right] < \infty$, this is equivalent to
\begin{equation*}
\sum_{k=1}^\infty (1+ \lambda^b_k)^{-\alpha} \Ebb \left[(X^{b,k}_t)^2 \right] \left( \langle \phi^b_k, f^x_n\rangle_\mu - \langle \phi^b_k,f^x_m \rangle_\mu \right)^2 \leq 2e^{2t}c_6 \left( 2^{-n} + 2^{-m}\right).
\end{equation*}
It follows that the left-hand side tends to zero as $m,n \to \infty$. By Theorem \ref{existence} we know that
\begin{equation*}
\sum_{k=1}^\infty (1+ \lambda^b_k)^{-\alpha} \Ebb \left[(X^{b,k}_t)^2 \right] \langle \phi^b_k, f^x_n\rangle_\mu^2 = \Ebb \left[ \langle u(t),f^x_n \rangle_\mu^2 \right] < \infty
\end{equation*}
for all $x \in F$, $n \geq 0$ and $t \in [0,\infty)$, therefore by the completeness of the sequence space $\ell^2$ there must exist a unique sequence $(y_k)_{k=1}^\infty$ such that $\sum_{k=1}^\infty y_k^2 < \infty$ and
\begin{equation*}
\lim_{n \to \infty} \sum_{k=1}^\infty \left( (1+ \lambda^b_k)^{-\frac{\alpha}{2}} \Ebb \left[(X^{b,k}_t)^2 \right]^\frac{1}{2} \langle \phi^b_k, f^x_n\rangle_\mu - y_k \right)^2 = 0.
\end{equation*}
Since $\phi^b_k$ is continuous we have $\lim_{n \to \infty}\langle \phi^b_k, f^x_n\rangle_\mu = \phi^b_k(x)$. Thus by Fatou's lemma we can identify the sequence $(y_k)$. We must have
\begin{equation}\label{pointfin}
\sum_{k=1}^\infty (1+ \lambda^b_k)^{-\alpha} \Ebb \left[(X^{b,k}_t)^2 \right] \phi^b_k(x)^2 < \infty
\end{equation}
and
\begin{equation*}
\lim_{n \to \infty}\sum_{k=1}^\infty (1+ \lambda^b_k)^{-\alpha} \Ebb \left[(X^{b,k}_t)^2 \right] \left( \langle \phi^b_k, f^x_n\rangle_\mu - \phi^b_k(x) \right)^2 = 0.
\end{equation*}
Equivalently by \eqref{Wseries},
\begin{equation*}
\Ebb \left[ u(t,x)^2 \right] < \infty
\end{equation*}
(so we have proven square-integrability) and
\begin{equation*}
\lim_{n \to \infty}\Ebb \left[ \left( \langle u(t), f^x_n \rangle_\mu - u(t,x) \right)^2 \right] = 0.
\end{equation*}
In particular by taking $m \to \infty$ in \eqref{cauchyseq} we have that
\begin{equation*}
\Ebb \left[ \left( \langle u(t), f^x_n \rangle_\mu - u(t,x) \right)^2 \right] \leq 2c_6e^{2t} 2^{-n}.
\end{equation*}
\end{proof}
By virtue of the previous theorem it is possible to interpret solutions  $u$ to the SPDE \eqref{SPDE} as random maps $u: [0,\infty) \times F \to \Rbb$, where as usual we have suppressed the dependence of $u$ on the underlying probability space. It therefore makes sense to consider issues of continuity of $u$ on $[0,\infty) \times F$.
\begin{rem}
We note that \cite[Example 7.4]{foondun2011} gives a similar result for stochastic heat equations where the operator $L$ is the generator for a fractional diffusion in the sense of \cite[Section 3]{barlow1998} under suitable conditions.
\end{rem}

\section{H\"{o}lder regularity}\label{Holder}
The aim of this section is to use our continuity theorems of Section \ref{Kolmogorov} to prove H\"{o}lder regularity results for a version of the family defined in Theorem \ref{ptreg}, and then show that this version can be identified with the original solution to \eqref{SPDE}. We wish to use Theorem \ref{cty3} and Corollary \ref{cty4}, so we need estimates on the expected spatial and temporal increments of the solution.

\subsection{Spatial estimate}

\begin{prop}\label{spaceestim2}
Let $T > 0$. Let $u = ( u(t,x) )_{(t,x) \in [0,\infty) \times F}$ be the family defined in Theorem \ref{ptreg}. Then there exists a constant $C_2 > 0$ such that
\begin{equation*}
\Ebb \left[ (u(t,x) - u(t,y))^2 \right] \leq C_2R(x,y)
\end{equation*}
for all $t \in [0,T]$ and all $x,y \in F$.
\end{prop}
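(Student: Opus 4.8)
The plan is to reduce the spatial increment bound to the already-established Lipschitz continuity of the resolvent density $\rho^b_1$ (Proposition~\ref{resolvLip}), in much the same spirit as the proof of Theorem~\ref{ptreg}. First I would fix $t \in [0,T]$ and $x,y \in F$, and use the approximating functions $f^x_n, f^y_n$ from the previous subsection. By Theorem~\ref{ptreg}, $\langle u(t), f^x_n \rangle_\mu \to u(t,x)$ in $L^2(\Pbb)$ and likewise for $y$, so it suffices to bound $\Ebb[(\langle u(t), f^x_n - f^y_n \rangle_\mu)^2]$ uniformly in $n$ (with a bound of the form $C R(x,y)$ plus something that vanishes) and then pass to the limit $n \to \infty$.

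The key computation is to apply Lemma~\ref{resolvestim} with $g = f^x_n - f^y_n$, which gives
\begin{equation*}
\Ebb \left[ \langle u(t), f^x_n - f^y_n \rangle_\mu^2 \right] \leq \frac{e^{2t}}{2} \int_F \int_F \rho^b_1(z_1,z_2) \, g(z_1) g(z_2) \, \mu(dz_1)\mu(dz_2),
\end{equation*}
and then to expand the double integral into four terms according to $g = f^x_n - f^y_n$. The diagonal-type terms involving $\rho^b_1$ integrated against $f^x_n \otimes f^x_n$ and $f^y_n \otimes f^y_n$ are bounded by $\sup \rho^b_1$, which is fine, but these do not obviously cancel; the cross terms must be combined with them. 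The right way to organize this is to write the double integral as $\iint \rho^b_1(z_1,z_2)[f^x_n(z_1) - f^y_n(z_1)] f^x_n(z_2)\,\mu(dz_1)\mu(dz_2) - \iint \rho^b_1(z_1,z_2)[f^x_n(z_1) - f^y_n(z_1)] f^y_n(z_2)\,\mu(dz_1)\mu(dz_2)$, and in each inner integral over $z_1$ use that $f^x_n$ and $f^y_n$ are probability densities supported on $D^0_n(x)$ and $D^0_n(y)$ respectively, so that $\int \rho^b_1(z_1,z_2) f^x_n(z_1)\,\mu(dz_1)$ differs from $\rho^b_1(x',z_2)$ (for $x'$ near $x$) by at most $O(2^{-n})$ by Proposition~\ref{resolvLip} and Proposition~\ref{nhoodestim}, and similarly for $y$. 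After these replacements the main term becomes $\rho^b_1(x,z_2) - \rho^b_1(y,z_2)$ integrated against $f^x_n(z_2) - f^y_n(z_2)$, and applying Proposition~\ref{resolvLip} once more (this time in the first argument) bounds $|\rho^b_1(x,z_2) - \rho^b_1(y,z_2)| \leq 2R(x,y)$, while $\int |f^x_n(z_2) - f^y_n(z_2)|\,\mu(dz_2) \leq 2$. Thus one obtains $\Ebb[\langle u(t), f^x_n - f^y_n \rangle_\mu^2] \leq C e^{2t}(R(x,y) + 2^{-n})$ for a constant $C$ independent of $n,t,x,y$.

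Finally I would let $n \to \infty$: the left side converges to $\Ebb[(u(t,x) - u(t,y))^2]$ by Theorem~\ref{ptreg} (using $\Ebb[(\langle u(t), f^x_n\rangle_\mu - u(t,x))^2] \to 0$ and the elementary inequality $(a-b)^2 \leq 3(a^2 + b^2 + c^2)$-type splitting, or simply $L^2$-convergence of the two approximants), and the $2^{-n}$ term vanishes, yielding $\Ebb[(u(t,x)-u(t,y))^2] \leq C e^{2T} R(x,y)$ for $t \in [0,T]$. Setting $C_2 = C e^{2T}$ completes the proof. The main obstacle I anticipate is bookkeeping in the expansion of the double integral — specifically, making sure the error terms incurred when replacing $\int \rho^b_1(z_1,\cdot) f^x_n(z_1)\,\mu(dz_1)$ by a point evaluation $\rho^b_1(x,\cdot)$ are genuinely $O(2^{-n})$ uniformly and do not secretly depend on $R(x,y)$ in a bad way; this is handled by the uniform Lipschitz bound of Proposition~\ref{resolvLip} together with $\diam(D^0_n(x)) \leq 2c_6 2^{-n}$ from Proposition~\ref{nhoodestim}. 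Everything else is routine.
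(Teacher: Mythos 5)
Your proposal is correct and follows essentially the same route as the paper's proof: both reduce the increment to $\langle u(t), f^x_n - f^y_n\rangle_\mu$ via Theorem~\ref{ptreg}, apply Lemma~\ref{resolvestim} with $g = f^x_n - f^y_n$, and then use the Lipschitz bound of Proposition~\ref{resolvLip} together with Proposition~\ref{nhoodestim} on the resulting double integral. The paper is marginally slicker in that it first passes to the limit $n\to\infty$ to obtain the exact value $\rho^b_1(x,x) - 2\rho^b_1(x,y) + \rho^b_1(y,y)$ and only then regroups and invokes the Lipschitz bound, rather than carrying the $O(2^{-n})$ error terms through the estimate explicitly as you do, but the ingredients, the intermediate quantity, and the final constant are the same.
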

\begin{proof}
Recall from Theorem \ref{ptreg} that
\begin{equation*}
\lim_{n \to \infty}\Ebb \left[ \left( \left\langle u(t), f^x_n \right\rangle_\mu - u(t,x) \right)^2 \right] = 0,
\end{equation*}
and an analogous result holds for $y$. Thus by Lemma \ref{resolvestim},
\begin{equation*}
\begin{split}
\Ebb &\left[ (u(t,x) - u(t,y))^2 \right] = \lim_{n \to \infty} \Ebb \left[ \left\langle u(t), f^x_n - f^y_n \right\rangle_\mu^2 \right]\\
&\leq \frac{e^{2t}}{2} \lim_{n \to \infty}\int_F \int_F \rho^b_1(z_1,z_2)(f^x_n(z_1) - f^y_n(z_1))(f^x_n(z_2) - f^y_n(z_2))\mu(dz_1)\mu(dz_2)\\
&= \frac{e^{2t}}{2} \left( \rho^b_1(x,x) - 2\rho^b_1(x,y) + \rho^b_1(y,y) \right),
\end{split}
\end{equation*}
where we have used Proposition \ref{resolvLip}, Proposition \ref{nhoodestim} and the definition of $f^x_n$ (similarly to the proof of Theorem \ref{ptreg}). Hence again by Proposition \ref{resolvLip},
\begin{equation*}
\begin{split}
\Ebb \left[ (u(t,x) - u(t,y))^2 \right] &\leq \frac{e^{2T}}{2} \left( \rho^b_1(x,x) - \rho^b_1(x,y) + \rho^b_1(y,y) - \rho^b_1(y,x) \right)\\
&\leq 2 e^{2T} R(x,y).
\end{split}
\end{equation*}
\end{proof}

\subsection{Temporal estimates}

For the time estimates we can save ourselves some work by noticing that if $X^{b,k}$ is an Ornstein-Uhlenbeck process then
\begin{equation*}
\Ebb \left[ (X^{b,k}_s - X^{b,k}_{s+t})^2 \right] = \frac{1}{\lambda^b_k}(1-e^{-\lambda^b_kt}) - \frac{e^{-2\lambda^b_ks}}{2\lambda^b_k} (1- e^{-\lambda^b_kt})^2,
\end{equation*}
so that
\begin{equation*}
\frac{1}{2\lambda^b_k}(1-e^{-\lambda^b_kt}) \leq \Ebb \left[ (X^{b,k}_s - X^{b,k}_{s+t})^2 \right] \leq \frac{1}{\lambda^b_k}(1-e^{-\lambda^b_kt})
\end{equation*}
for any $s,t \in [0,\infty)$. Therefore regardless of whether $X^{b,k}$ is an Ornstein-Uhlenbeck or Wiener process we have that
\begin{equation*}
\Ebb \left[ (X^{b,k}_s - X^{b,k}_{s+t})^2 \right] \leq 2\Ebb \left[ (X^{b,k}_t)^2 \right].
\end{equation*}
Now since (using the independence of the $X^{b,k}$)
\begin{equation*}
\Ebb \left[ (u(s,x) - u(s+t,x))^2 \right] = \sum_{k=1}^\infty (1+ \lambda^b_k)^{-\alpha} \Ebb \left[(X^{b,k}_s - X^{b,k}_{s+t})^2 \right] \phi^b_k(x)^2,
\end{equation*}
it follows that it suffices to find estimates of the above in the case $s=0$.

We start with a method similar to the proof of \cite[Proposition 3.7]{walsh1986} which does not quite cover all values of $\alpha$. First, a lemma:
\begin{lem}\label{absum}
Consider the sum
\begin{equation*}
\sigma_{ab}(t) = \sum_{k=1}^\infty \left( k^{a-1} \wedge (k^{b-1}t) \right)
\end{equation*}
for $t \in (0,\infty)$, where $a,b \in \Rbb$ are constants. Then the following hold:
\begin{enumerate}
\item If $a, b \geq 0$ then $\sigma_{ab}(t)$ diverges.
\item If $a \in \Rbb$, $b < 0$ then there exists $C_{a,b} > 0$ such that $\sigma_{ab}(t) \leq C_{a,b}t$ for all $t$.
\item If $a < 0$, $b \geq 0$ then there exists $C_{a,b} > 0$ such that $\sigma_{ab}(t) \leq C_{a,b}t^{\frac{-a}{b - a}}$ for all $t$.
\end{enumerate}
\end{lem}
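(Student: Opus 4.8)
The plan is to analyze the crossover point at which the two terms in the minimum $k^{a-1} \wedge (k^{b-1}t)$ switch dominance, split the sum at that point, and estimate each piece by comparison with an integral (or a geometric/$p$-series).

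First I would locate the crossover. The inequality $k^{a-1} \leq k^{b-1}t$ is equivalent to $k^{a-b} \leq t$. When $a - b < 0$ (the generic case in parts (2) and (3) after noting the sign of $a-b$) this reads $k \geq t^{-1/(a-b)} = t^{1/(b-a)}$; set $N = N(t) := t^{1/(b-a)}$. So roughly: for $k \lesssim N$ the term equals $k^{b-1}t$, and for $k \gtrsim N$ the term equals $k^{a-1}$. (When $a - b \geq 0$ the relation between the two terms can only fail to improve as $k$ grows, which feeds into part (1).)

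For part (1), if $a, b \geq 0$ then $k^{a-1} \geq k^{-1}$ and $k^{b-1}t \geq k^{-1}t$, so each summand is at least $k^{-1}\min(1,t) \gtrsim k^{-1}$, and $\sum k^{-1}$ diverges; this is a one-line argument. For part (2), with $b < 0$ we simply bound $k^{a-1} \wedge (k^{b-1}t) \leq k^{b-1}t$ termwise, and since $b - 1 < -1$ the series $\sum_k k^{b-1}$ converges to some constant $C_{a,b}$, giving $\sigma_{ab}(t) \leq C_{a,b}t$ for all $t$. For part (3), with $a < 0 \leq b$ we have $a - b < 0$, so the split at $N = t^{1/(b-a)}$ is genuine. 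I would write
\begin{equation*}
\sigma_{ab}(t) \leq \sum_{k \leq N} k^{b-1}t + \sum_{k > N} k^{a-1}.
\end{equation*}
The first sum is at most $t \int_0^{N+1} x^{b-1}\,dx \lesssim t N^b$ if $b > 0$, or $\lesssim t \log(N+1)$ if $b = 0$ — but in fact it is cleaner to bound each term $k^{b-1}t \leq k^{a-1}\cdot(k^{b-a}t) \leq k^{a-1}$ when $k \leq N$... no: better, for $k \le N$ use $k^{b-1} t \le N^{b-1} t \cdot (\text{count})$ only if $b \le 1$; I would instead just integrate. With $a < 0$: $N^b t = t^{1 + b/(b-a)} = t^{(b-a+b)/(b-a)}$, and one checks $1 + b/(b-a) = -a/(b-a) + 2b/(b-a)$... let me instead directly note $N^b t = (t^{1/(b-a)})^b \cdot t = t^{b/(b-a) + 1} = t^{(2b-a)/(b-a)}$, which is \emph{not} $t^{-a/(b-a)}$ unless $b=0$. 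So the first sum alone is too big when $b>0$ — the point is that $\sum_{k\le N} k^{b-1} t$ should be compared more carefully: actually $\sum_{k \le N} k^{b-1} \asymp N^b$ for $b>0$, times $t$, giving $t^{1 + b/(b-a)}$; meanwhile the tail $\sum_{k>N} k^{a-1} \asymp N^a = t^{a/(b-a)} = t^{-(-a)/(b-a)}$, which \emph{is} the claimed exponent. So I must be more careful with the head: bound $k^{b-1}t \wedge k^{a-1} \le k^{a-1}$ is wrong for small $k$; instead observe that when $b > 0$ the head is dominated not by its largest term but... Actually the correct estimate for the head is: since we are taking the \emph{minimum}, $k^{b-1}t \le k^{a-1}$ precisely on $k \ge N$, so on $k \le N$ we have $k^{a-1} \le k^{b-1}t$ and the summand is $k^{a-1}$ \emph{there too if $a-1 > b-1$, i.e. $a>b$}, contradiction since $a<b$. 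Hmm — so on $k\le N$ the summand genuinely is $k^{b-1}t$.

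The resolution: treat $0 < b$ vs $b \le 0$ wait $b\ge 0$; if $b=0$ head is $t\log N \lesssim t^{1 + 0} \log$, marginal. The honest fix is that for $b\ge 0$, $a<0$, one splits and gets head $\lesssim t\cdot N^{\max(b,\epsilon)}$ and tail $\lesssim N^a$, and \emph{both} equal $t^{-a/(b-a)}$ up to constants \emph{only when done right}: $tN^b = t \cdot t^{b/(b-a)} = t^{(b-a+b)/(b-a)}$ — the exponent is $(2b-a)/(b-a)$, and since $a<0<b$... wait if $b \ge a$ is false? We have $a<0\le b$ so $b \ge 0 > a$, fine. Is $(2b-a)/(b-a) \ge -a/(b-a)$? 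Equivalent to $2b - a \ge -a$, i.e. $2b\ge 0$: yes. So the head is the \emph{smaller} contribution is FALSE — head has larger exponent hence (since $t$ small) head is smaller; for $t$ large both are handled since we only need an upper bound and $t^{-a/(b-a)}$ is the larger power for $t\ge1$? No: $-a/(b-a) < (2b-a)/(b-a)$, so for $t \ge 1$ the head dominates and gives $t^{(2b-a)/(b-a)}$, exceeding the claim. So for large $t$ one must use instead that $\sigma_{ab}(t) \le \sum_k k^{a-1} < \infty$ (since $a-1<-1$), a constant, and $t^{-a/(b-a)} \to \infty$, so the claim holds trivially for $t\ge1$. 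Thus: handle $t \ge 1$ by the crude constant bound $\sum k^{a-1}$, and $t < 1$ by the split, where now $N \le 1$... wait $N = t^{1/(b-a)}$ with $b-a>0$ and $t<1$ gives $N<1$, meaning the head is empty and $\sigma_{ab}(t) \le \sum_{k\ge 1}k^{a-1}\wedge(k^{b-1}t)$; for $k\ge 1 > N$ all terms are $k^{a-1}$?! That can't be right either. I think I have the crossover inverted: $k^{a-b}\le t$ with $a-b<0$ means $k^{-(b-a)} \le t$, i.e. $k^{b-a} \ge 1/t$, i.e. $k \ge t^{-1/(b-a)}$. So $N = t^{-1/(b-a)}$, which for $t<1$ is large. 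Good — that fixes it. Then head $= \sum_{k<N}k^{b-1}t$, tail $=\sum_{k\ge N}k^{a-1}\asymp N^a = t^{-a/(b-a)} \cdot$ wait $N^a = t^{-a/(b-a)}$, and $-a/(b-a) = (-a)/(b-a) > 0$ as claimed; head $\asymp tN^{b}$ if $b>0 = t\cdot t^{-b/(b-a)} = t^{1-b/(b-a)} = t^{(b-a-b)/(b-a)} = t^{-a/(b-a)}$ — matches! And if $b=0$, head $\asymp t\log N \lesssim t^{-a/(b-a)-\epsilon}\cdot$const, still fine for $t$ bounded, and combined with the $t\ge1$ constant bound we conclude.

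\medskip

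\noindent\textbf{Summary of the plan.} Part (1): lower-bound each summand by $ck^{-1}$ and invoke divergence of the harmonic series. Part (2): since $b-1<-1$, dominate by $t\sum k^{b-1}$. Part (3): treat $t\ge 1$ via the crude uniform bound $\sigma_{ab}(t)\le\sum_k k^{a-1}<\infty \le \sum_k k^{a-1}\cdot t^{-a/(b-a)}$ (valid as $t^{-a/(b-a)}\ge 1$); and for $t<1$ split the sum at $N = \lceil t^{-1/(b-a)}\rceil$ into a head $\sum_{k<N}k^{b-1}t$ and a tail $\sum_{k\ge N}k^{a-1}$, estimate each by comparison with $\int x^{b-1}\,dx$ and $\int x^{a-1}\,dx$ respectively, and observe that both come out $\lesssim t^{-a/(b-a)}$ (the head needing a harmless extra logarithmic slack only in the boundary case $b=0$, absorbed into the constant since $t$ is bounded). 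The main obstacle is purely bookkeeping: getting the crossover exponent $N(t) = t^{-1/(b-a)}$ the right way up and verifying that head and tail really balance to the stated power $t^{-a/(b-a)}$; once that algebra is pinned down, the integral comparisons are routine.
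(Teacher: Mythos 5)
Your final plan for part (3) --- split the sum at the crossover index $N \approx t^{-1/(b-a)}$ and estimate head and tail by integral comparisons --- is essentially the same strategy as the paper, which instead bounds the whole sum by $\int_0^\infty \bigl(x^{a-1}\wedge(x^{b-1}t)\bigr)\,dx$ and then splits the \emph{integral} at the crossover point; parts (1) and (2) are handled identically. Two small differences are worth noting. First, your separate treatment of $t\geq 1$ via the crude uniform bound $\sigma_{ab}(t)\leq\sum_k k^{a-1}$ is unnecessary: the integral comparison works for all $t>0$ at once because the integral starts at $0$. Second --- and this is a genuine simplification --- your discrete split handles $b>1$ automatically: on each of $\{k<N\}$ and $\{k\geq N\}$ the summand is a single monotone monomial, so the $\sum$-versus-$\int$ comparison is valid piecewise. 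The paper, by contrast, compares the full sum to the full integral, which requires $x\mapsto x^{a-1}\wedge(x^{b-1}t)$ to be decreasing on all of $(0,\infty)$; this fails for $b>1$, forcing the paper into a separate reduction to the case $b=1$ via the pointwise bound $k^{b-1}t\leq t^{(1-a)/(b-a)}$ for $k\leq t^{-1/(b-a)}$.

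Two cautions, though. The exposition is very muddled: you initially place the crossover at $t^{1/(b-a)}$ instead of $t^{-1/(b-a)}$ and spend most of the write-up discovering and repairing this sign error; a clean submission should present only the corrected computation. More substantively, your dismissal of the boundary case $b=0$ (``harmless extra logarithmic slack \dots absorbed into the constant since $t$ is bounded'') is wrong. For $b=0$, $a<0$ the head gives $t\sum_{k<N}k^{-1}\asymp t\log N\asymp \tfrac{t\,|\log t|}{|a|}$ as $t\to 0^+$, and $|\log t|\to\infty$, so no constant $C_{a,0}$ absorbs it: the claimed bound $\sigma_{a0}(t)\leq C_{a,0}\,t$ genuinely fails at $b=0$. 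To be fair, the paper's own proof also breaks silently there (the constant $C_{a,b}=b^{-1}-a^{-1}$ is undefined and $\int_0^N x^{b-1}\,dx$ diverges at $b=0$), so this is a defect in the lemma as stated rather than something uniquely wrong with your argument --- but it should be flagged rather than waved away, e.g.\ by restricting (3) to $b>0$ or accepting a logarithmic correction.
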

\begin{proof}
(1) is obvious. For (2) take $C_{a,b} = \zeta(1-b)$ the Riemann zeta function.

For (3) we must consider two cases depending on the value of $b$. First assume that $b \in [0,1]$. Then $x \mapsto (x^{a-1} \wedge (x^{b-1}t))$ is a decreasing function on $(0,\infty)$ so
\begin{equation*}
\begin{split}
\sigma_{ab}(t) &\leq \int_0^\infty (x^{a-1} \wedge (x^{b-1}t))dx\\
&= t\int_0^{t^{\frac{-1}{b-a}}}x^{b-1}dx + \int_{t^{\frac{-1}{b-a}}}^\infty x^{a-1}dx\\
&= tb^{-1}t^{\frac{-b}{b-a}} - a^{-1}t^{\frac{-a}{b-a}}\\
&= C_{a,b}t^{\frac{-a}{b-a}}
\end{split}
\end{equation*}
where we have $C_{a,b} = b^{-1} - a^{-1}$. If $b > 1$ then $x \mapsto (x^{a-1} \wedge (x^{b-1}t))$ is increasing on $[0,t^{\frac{-1}{b-a}}]$ where it is equal to $x^{b-1}t$ and decreasing on $[t^{\frac{-1}{b-a}},\infty)$ where it is equal to $x^{a-1}$. Thus
\begin{equation*}
k^{a-1} \wedge (k^{b-1}t) \leq k^{a-1} \wedge (t^{-\frac{b-1}{b-a}}t) = k^{a-1} \wedge (k^{1-1}t^{\frac{1-a}{b-a}})
\end{equation*}
for all $k \in \Nbb$, and we are back to the case $a < 0$, $b = 1$. It follows that
\begin{equation*}
\sigma_{ab}(t) \leq (1 - a^{-1}) \left(t^{\frac{1-a}{b-a}} \right)^{\frac{-a}{1-a}} = (1 - a^{-1})t^{\frac{-a}{b-a}}
\end{equation*}
so we have $C_{a,b} = 1 - a^{-1}$.
\end{proof}
\begin{prop}\label{timeestim1}
Let $T > 0$. Let $u = ( u(t,x) )_{(t,x) \in [0,\infty) \times F}$ be the family defined in Theorem \ref{ptreg}. If $\alpha > d_s - 1$ then there exists a constant $C_3 > 0$ such that
\begin{equation*}
\Ebb \left[ (u(s,x) - u(t,x))^2 \right] \leq C_3|s-t|^{1 \wedge (1 - d_s + \alpha)}
\end{equation*}
for all $s,t \in [0,T]$ and all $x \in F$.
\end{prop}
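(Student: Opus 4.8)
The plan is to expand $\Ebb[u(h,x)^2]$ in the orthonormal eigenbasis, feed in the spectral estimates of Proposition~\ref{spectra}, and recognise the resulting series as one handled by Lemma~\ref{absum}. By the reduction recorded just before the statement of the proposition (namely $\Ebb[(X^{b,k}_s-X^{b,k}_{s+h})^2]\le 2\Ebb[(X^{b,k}_h)^2]$ together with the independence of the $X^{b,k}$), it suffices to produce a constant $C=C(T,\alpha,d_s,c_1,c_2,c_3)$ with $\Ebb[u(h,x)^2]\le C\,h^{\theta}$ for all $h\in[0,T]$ and all $x\in F$, where $\theta:=1\wedge(1-d_s+\alpha)$; note that $\theta\in(0,1]$ is exactly the hypothesis $\alpha>d_s-1$, which moreover forces $\alpha>0$ since $d_s\ge 1$. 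From the series representation \eqref{Wseries} and independence,
\[
\Ebb\left[u(h,x)^2\right]=\sum_{k=1}^\infty (1+\lambda^b_k)^{-\alpha}\,\Ebb\left[(X^{b,k}_h)^2\right]\phi^b_k(x)^2 .
\]

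For the term $k=1$: if $\lambda^b_1=0$ (i.e.\ $b=N$) then $\phi^N_1\equiv 1$ and $X^{N,1}$ is a Wiener process, so this term equals $h\le T^{1-\theta}h^{\theta}$ on $[0,T]$ (as $\theta\le 1$); if $\lambda^b_1>0$ the bounds of Proposition~\ref{spectra} already hold for $k\ge 1$ by Remark~\ref{consteval}, so it is treated like the tail. For $k\ge 2$ I would estimate, using $\Ebb[(X^{b,k}_h)^2]=\tfrac{1-e^{-2\lambda^b_k h}}{2\lambda^b_k}\le h\wedge(2\lambda^b_k)^{-1}$, the bound $(1+\lambda^b_k)^{-\alpha}\le(\lambda^b_k)^{-\alpha}$, the uniform sup-norm bound $\phi^b_k(x)^2\le c_3^2(\lambda^b_k)^{d_s/2}\le c_3^2 c_2^{d_s/2}\,k$, and $\lambda^b_k\ge c_1 k^{2/d_s}$, to obtain (after replacing the various constants inside the minimum by a single one) a term bound
\[
(1+\lambda^b_k)^{-\alpha}\,\Ebb[(X^{b,k}_h)^2]\,\phi^b_k(x)^2\le C'\left(k^{a-1}\wedge(k^{b-1}h)\right),\qquad a:=2-\tfrac{2(\alpha+1)}{d_s},\quad b:=2-\tfrac{2\alpha}{d_s},
\]
with $C'$ independent of $h$ and $x$. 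Summing over $k$ then gives $\Ebb[u(h,x)^2]\le C'\sigma_{ab}(h)$ in the notation of Lemma~\ref{absum}, up to the already-handled $k=1$ term and adjusting constants.

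It then remains to read off the exponent. The hypothesis $\alpha>d_s-1$ is precisely $a<0$, so the divergent case~(1) of Lemma~\ref{absum} does not occur, and the dichotomy is governed by the sign of $b$. If $\alpha>d_s$ then $b<0$ and case~(2) gives $\sigma_{ab}(h)\le C_{a,b}h$; here $1-d_s+\alpha>1$, so $\theta=1$ and the bound is $\le Ch^{\theta}$. If $d_s-1<\alpha\le d_s$ then $b\ge 0$ and case~(3) gives $\sigma_{ab}(h)\le C_{a,b}h^{-a/(b-a)}$; since $b-a=2/d_s$ and $-a=2(\alpha+1-d_s)/d_s$, the exponent equals $-a/(b-a)=1-d_s+\alpha=\theta$ (using $\alpha\le d_s$). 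Either way $\Ebb[u(h,x)^2]\le Ch^{\theta}$ on $[0,T]$, which is the claim.

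I expect no serious obstacle: the only real content is observing that Proposition~\ref{spectra} turns the eigenfunction expansion into exactly a series of the shape $\sigma_{ab}(h)$, after which everything reduces to the elementary case-split on $\alpha$ versus $d_s$ --- which is also exactly where the restriction $\alpha>d_s-1$ is used, since it keeps $a$ negative and so rules out the divergent regime of Lemma~\ref{absum}. Uniformity in $x$ is automatic because $\Vert\phi^b_k\Vert_\infty\le c_3|\lambda^b_k|^{d_s/4}$ is uniform over $F$, and the $T$-dependence of $C$ enters only through the trivial $k=1$ term.
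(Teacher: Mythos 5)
Your proof is correct and follows essentially the same route as the paper's: expand in the eigenbasis, bound $\phi^b_k(x)^2$ by $c_3^2(\lambda^b_k)^{d_s/2}$ and $1-e^{-\lambda^b_k h}$ by $1\wedge(\lambda^b_k h)$, convert to a series of the form $\sigma_{ab}(h)$ via the Weyl bounds, and invoke Lemma~\ref{absum}, with the hypothesis $\alpha>d_s-1$ being exactly what keeps $a<0$. The only differences are cosmetic: you treat the $k=1$ term explicitly (the paper leaves the $\lambda^b_1=0$ case as an exercise) and you spell out the case split and the resulting exponent $-a/(b-a)=1-d_s+\alpha$, which the paper compresses into a single sentence.
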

\begin{proof}
We assume that $\lambda^b_1 > 0$ to streamline our calculations. The case $\lambda^b_1 = 0$ is left as an exercise. By the discussion at the start of this section we may assume that $s=0$. Fix $x \in F$ and $t \in [0,T]$. Recall the constant $c_3$ from Proposition \ref{spectra}. By independence of the $X^{b,k}$ we have that
\begin{equation*}
\begin{split}
\Ebb \left[ u(t,x)^2 \right] &= \Ebb \left[ \left( \sum_{k=1}^\infty (1+ \lambda^b_k)^{-\frac{\alpha}{2}} X^{b,k}_t \phi^b_k(x) \right)^2 \right]\\
&\leq \sum_{k=1}^\infty \frac{\phi^b_k(x)^2}{(1+ \lambda^b_k)^{\alpha}} \frac{1}{\lambda^b_k}(1-e^{-\lambda^b_kt})\\
&\leq c_3^2\sum_{k=1}^\infty \frac{1-e^{-\lambda^b_kt}}{(\lambda^b_k)^{1 - \frac{d_s}{2}}(1+ \lambda^b_k)^{\alpha}}\\
&\leq c_3^2\sum_{k=1}^\infty \frac{1 \wedge (\lambda^b_kt)}{(\lambda^b_k)^{1 - \frac{d_s}{2}}(1+ \lambda^b_k)^{\alpha}}\\
&\leq c'\sum_{k=1}^\infty \left( k^{1- \frac{2+2\alpha}{d_s}} \wedge (k^{1- \frac{2\alpha}{d_s}}t) \right)
\end{split}
\end{equation*}
and we are within the scope of Lemma \ref{absum} (as long as $t > 0$, though the case $t=0$ is trivial). We find that if $\alpha > d_s-1$ then the sum converges and there exists $c'' > 0$ such that
\begin{equation*}
\Ebb \left[ u(t,x)^2 \right] \leq c''t^{1 \wedge (1 + \alpha - d_s)}
\end{equation*}
for all $x \in F$, $t \in [0,T]$.
\end{proof}
\begin{rem}\label{intervaltime}
In the case of Example \ref{interval} with $F = [0,1]$ the above estimate can be made to work for all $\alpha \geq 0$. This is because in this case $d_s = 1$ and the eigenfunctions $\phi^b_k$ satisfy $\Vert \phi^b_k \Vert_{\infty} \leq c$ for some $c > 0$ so we instead have that
\begin{equation*}
\Ebb \left[ u(t,x)^2 \right] \leq c'''\sum_{k=1}^\infty \left( k^{-(2+2\alpha)} \wedge (k^{-2\alpha}t) \right).
\end{equation*}
Therefore using Lemma \ref{absum}, we have for all $\alpha \geq 0$ that
\begin{equation*}
\Ebb \left[ (u(s,x) - u(t,x))^2 \right] \leq C_3|s-t|^{1 \wedge (\frac{1}{2} + \alpha)}.
\end{equation*}
This is the method used in \cite{walsh1986}.
\end{rem}
We now prove an alternative estimate that is weaker for large $\alpha$ but holds for all $\alpha \geq 0$.
\begin{prop}\label{timeestim2}
Let $T > 0$. Let $u = ( u(t,x) )_{(t,x) \in [0,\infty) \times F}$ be the family defined in Theorem \ref{ptreg}. Then there exists $C_4 > 0$ such that
\begin{equation*}
\Ebb \left[ (u(s,x) - u(t,x))^2 \right] \leq C_4|s-t|^{1 - \frac{d_s}{2}}
\end{equation*}
for all $s,t \in [0,T]$ and all $x \in F$.
\end{prop}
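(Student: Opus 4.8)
The plan is to reduce the problem to a one-parameter estimate on $\Ebb[u(t,x)^2]$ and then read off the time decay from the on-diagonal behaviour of the heat kernel of $X^b$. As recorded at the start of this subsection, the independence of the processes $(X^{b,k})_{k\ge1}$ together with the bound $\Ebb[(X^{b,k}_s-X^{b,k}_t)^2]\le 2\,\Ebb[(X^{b,k}_{|s-t|})^2]$ gives
\begin{equation*}
\Ebb\left[(u(s,x)-u(t,x))^2\right] = \sum_{k=1}^\infty \frac{\Ebb\left[(X^{b,k}_s-X^{b,k}_t)^2\right]}{(1+\lambda^b_k)^\alpha}\,\phi^b_k(x)^2 \le 2\,\Ebb\left[u(|s-t|,x)^2\right],
\end{equation*}
so it suffices to produce a constant $C>0$, depending only on $T$ and $d_s$, with $\Ebb[u(t,x)^2]\le C\,t^{1-\frac{d_s}{2}}$ for all $x\in F$ and all $t\in(0,T]$.

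Next I would expand this second moment using the series representation of $u(t,x)$ from Theorem \ref{ptreg} and independence, insert the elementary identity $\Ebb[(X^{b,k}_t)^2]=\int_0^t e^{-2\lambda^b_k s}\,ds$ (valid whether $X^{b,k}$ is an Ornstein--Uhlenbeck or a Wiener process, with the obvious reading when $\lambda^b_k=0$), use that $(1+\lambda^b_k)^{-\alpha}\le1$ since $\alpha\ge0$, and apply Tonelli's theorem to swap the sum and integral:
\begin{equation*}
\Ebb\left[u(t,x)^2\right] = \sum_{k=1}^\infty \frac{\phi^b_k(x)^2}{(1+\lambda^b_k)^\alpha}\int_0^t e^{-2\lambda^b_k s}\,ds \le \int_0^t \left(\sum_{k=1}^\infty e^{-2\lambda^b_k s}\,\phi^b_k(x)^2\right)ds = \frac12\int_0^{2t} q^b_r(x)\,dr ,
\end{equation*}
where $q^b_r(x):=\sum_{k\ge1}e^{-\lambda^b_k r}\,\phi^b_k(x)^2$ is the value on the diagonal of the transition density of $X^b$; by Proposition \ref{spectra} this series converges absolutely for every $r>0$, so there is no measurability or interchange issue.

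The substantive input is then the on-diagonal heat kernel upper bound for diffusions on p.c.f.s.s. sets with regular harmonic structure: there is a constant $C'>0$, independent of $x\in F$ and of $b\in\{N,D\}$, such that $q^b_r(x)\le C'r^{-\frac{d_s}{2}}$ for all $r\in(0,1]$. This rests on the uniform comparability of $\mu(B(x,\rho))$ with $\rho^{d_H}$ (compare Propositions \ref{nhoodestim} and \ref{partsize}) together with the relation $d_s=2d_H/(d_H+1)$, and may be quoted from \cite{barlow1998} or \cite{kigami2012}. For $r\ge1$ the series $q^b_r(x)$ is non-increasing in $r$ and bounded above by $q^b_1(x)$, which is finite uniformly in $x$ by Proposition \ref{spectra}; hence $q^b_r(x)\le C''(1\vee r^{-\frac{d_s}{2}})$ for all $r>0$ and $x\in F$. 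Since $d_s<2$, integrating yields $\frac12\int_0^{2t}q^b_r(x)\,dr\le C\,t^{1-\frac{d_s}{2}}$ for $t\in(0,T]$, and combining with the first step gives the proposition with $C_4=2C$.

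I expect the only real point of substance to be invoking the heat kernel estimate in precisely the right form: uniform in $x$, valid for both boundary conditions, and carrying the exponent $\frac{d_s}{2}$ (equivalently, with the walk dimension of $(F,R)$ equal to $d_H+1$). The reduction to $s=0$, the sum--integral interchange, and the integration of $r^{-d_s/2}$ are all routine.
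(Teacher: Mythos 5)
Your proof is correct, but it takes a genuinely different route from the paper. The paper never invokes the on-diagonal heat kernel estimate; instead it starts from the approximation bound of Theorem~\ref{ptreg}, writes
\begin{equation*}
\Ebb\left[u(t,x)^2\right] \le 2\,\Ebb\left[\langle u(t),f^x_n\rangle_\mu^2\right] + 2c_8e^{2t}2^{-n},
\end{equation*}
controls the first term by $2t\Vert f^x_n\Vert_\mu^2 \lesssim t\,2^{d_Hn}$ using It\=o's isometry and the contractivity of $(1-L_b)^{-\alpha/2}S^b_s$, and then optimises over the discretisation level $n$, which produces $t^{1/(d_H+1)} = t^{1-d_s/2}$. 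The essential input there is the Lipschitz resolvent estimate (Proposition~\ref{resolvLip}), which the paper proves from scratch via Kigami's Green functions, so the whole argument is self-contained within the paper. Your reduction to $\tfrac12\int_0^{2t}q^b_r(x)\,dr$, with $q^b_r(x)=\sum_k e^{-\lambda^b_k r}\phi^b_k(x)^2$ the diagonal of the transition density, is a clean and legitimate alternative, but the on-diagonal bound $q^b_r(x)\le C r^{-d_s/2}$ is a substantial external theorem, not a routine step: it cannot be extracted from Proposition~\ref{spectra} alone, since inserting $\phi^b_k(x)^2 \le c_3^2(\lambda^b_k)^{d_s/2}$ together with $\lambda^b_k\asymp k^{2/d_s}$ only yields $q^b_r(x)\lesssim r^{-d_s}$, which is too weak. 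You flag this correctly by citing the heat-kernel literature (Hambly--Kumagai \cite{hambly1999} would actually be the most directly applicable reference in the paper's bibliography for these exact hypotheses), but be aware it is doing the heavy lifting. One could view the paper's argument as an in-house re-derivation of precisely the on-diagonal estimate (through the resolvent rather than the transition density), so the two proofs are morally equivalent, with the paper's trading a shorter quotation for a self-contained computation.
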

\begin{proof}
By the discussion at the start of this section we may assume that $s=0$. Set
\begin{equation*}
c_8' = (c_8 e^{2T}) \vee \frac{Td_H}{r_{\min}^{d_H}}
\end{equation*}
where the constant $c_8$ is from Theorem \ref{ptreg}. By Theorem \ref{ptreg} and It\={o}'s isometry (see proof of Lemma \ref{resolvestim}) we have that if $n \geq 0$ is an integer then
\begin{equation*}
\begin{split}
\Ebb \left[ u(t,x)^2 \right] &\leq 2\Ebb \left[ \langle u(t), f^x_n \rangle_\mu^2 \right] + 2c_8e^{2t} 2^{-n}\\
&= 2\int_0^t \Vert (1-L_b)^{-\frac{\alpha}{2}}S^b_s f^x_n \Vert_\mu^2 ds + 2c_8e^{2t} 2^{-n}\\
&\leq 2\int_0^t \Vert (1-L_b)^{-\frac{\alpha}{2}}S^b_s f^x_n \Vert_\mu^2 ds + 2c_8' 2^{-n}.
\end{split}
\end{equation*}
By the functional calculus for self-adjoint operators, $\Vert (1-L_b)^{-\frac{\alpha}{2}}S^b_s \Vert \leq 1$ for all $s \geq 0$. Thus
\begin{equation*}
\begin{split}
\Ebb \left[ u(t,x)^2 \right] &\leq 2t \Vert f^x_n \Vert_\mu^2 + 2c_8' 2^{-n}\\
&\leq 2r_{\min}^{-d_H}t 2^{d_Hn} + 2c_8' 2^{-n}
\end{split}
\end{equation*}
for all $(t,x) \in [0,T] \times F$ and all integer $n \geq 0$. We assume now that $t > 0$, and our aim is to choose $n \geq 0$ to minimise the above expression. Fixing $t \in (0,T]$, define $g: \Rbb \to [0,\infty)$ such that $g(y) = r_{\min}^{-d_H}t 2^{d_Hy} + c_8'2^{-y}$. The function $g$ has a unique stationary point which is a global minimum at
\begin{equation*}
y_0 = \frac{1}{(d_H + 1)\log 2} \log \left( \frac{r_{\min}^{d_H}c_8'}{d_Ht} \right).
\end{equation*}
Since $t \leq T$ we have by the definition of $c_8'$ that $y_0 \geq 0$. Since $y_0$ is not necessarily an integer we choose $n = \lceil y_0 \rceil$. Then $g$ is increasing in $[y_0,\infty)$ so we have that
\begin{equation*}
\Ebb \left[ u(t,x)^2 \right] \leq 2g(n) \leq 2g(y_0 + 1).
\end{equation*}
Setting $c_8'' := c_8'\frac{r_{\min}^{d_H}}{d_H}$ and evaluating the right-hand side we see that
\begin{equation*}
\begin{split}
\Ebb \left[ u(t,x)^2 \right] &\leq 2t r_{\min}^{-d_H} 2^{d_H} \left( \frac{c''_9}{t} \right)^\frac{d_H}{d_H + 1} + 2c_8' 2^{-1} \left( \frac{c''_9}{t} \right)^\frac{-1}{d_H + 1}\\
&\leq c_8''' t^\frac{1}{d_H + 1}\\
&= c_8''' t^{1 - \frac{d_s}{2}}
\end{split}
\end{equation*}
for all $(t,x) \in (0,T] \times F$, where the constant $c_8''' > 0$ is independent of $(t,x)$. This inequality obviously also holds in the case $t=0$.
\end{proof}

\subsection{H\"{o}lder regularity of solution}
We are now ready to prove the H\"{o}lder regularity result. It will turn out that the continuous version of $u(t,x)$ can be interpreted as an $\Hcal$-valued process, and is a version of the original $\Hcal$-valued solution to \eqref{SPDE} found in Theorem \ref{existence}. Recall $R_\infty$ the natural supremum metric on $\Rbb \times F$ given by
\begin{equation*}
R_\infty((s,x),(t,y)) = \max\{ |s-t|, R(x,y) \}.
\end{equation*}
\begin{thm}[H\"{o}lder regularity]\label{SPDEreg2}
Let $u = ( u(t,x) )_{(t,x) \in [0,\infty) \times F}$ be the family defined in Theorem \ref{ptreg}. Let
\begin{equation*}
\delta^\alpha =
\begin{cases}
\begin{array}{ll}
\frac{1}{2}(1 - \frac{d_s}{2}), & \alpha \leq \frac{d_s}{2},\\
\frac{1}{2}(1 - d_s + \alpha), & \frac{d_s}{2} < \alpha \leq d_s,\\
\frac{1}{2}, & \alpha > d_s.
\end{array}
\end{cases}
\end{equation*}

Then there exists a version $\tilde{u} = (\tilde{u}(t,x))_{(t,x) \in [0,\infty) \times F}$ of $u$ which satisfies the following:
\begin{enumerate}
\item For each $T > 0$, $\tilde{u}$ is almost surely essentially $\delta^\alpha$-H\"{o}lder continuous on $[0,T] \times F$ with respect to $R_\infty$.
\item For each $t \in [0,\infty)$, $\tilde{u}(t,\cdot)$ is almost surely essentially $\frac{1}{2}$-H\"{o}lder continuous on $F$ with respect to $R$.
\end{enumerate}
\end{thm}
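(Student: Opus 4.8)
The plan is to deduce both statements from the second continuity theorem (Theorem~\ref{cty3}, in the form of Corollary~\ref{cty4} for an interval $[0,T]$), applied to the family $u$ of Theorem~\ref{ptreg}, after upgrading the second-moment increment bounds of Propositions~\ref{spaceestim2}, \ref{timeestim1} and \ref{timeestim2} to $p$-th moment bounds for arbitrarily large $p$. The point that makes this possible is that $u=(u(t,x))_{(t,x)\in[0,\infty)\times F}$ is a centred Gaussian family: by \eqref{OUproc}--\eqref{Wseries} each $u(t,x)$ is an $\Lcal^2$-limit of finite linear combinations of the jointly Gaussian variables $\{X^{b,k}_t\}_{k\ge 1}$, so every increment $u(t,x)-u(t,y)$ or $u(s,x)-u(t,x)$ is centred Gaussian, whence $\Ebb[|Z|^p]=c_p\,(\Ebb[Z^2])^{p/2}$ with $c_p=\Ebb[|N(0,1)|^p]$. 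Thus Proposition~\ref{spaceestim2} gives $\Ebb[|u(t,x)-u(t,y)|^p]\le c_p C_2^{p/2} R(x,y)^{p/2}$ on $[0,T]$. For the temporal direction set $\theta:=1-\tfrac{d_s}{2}$ when $\alpha\le\tfrac{d_s}{2}$ (and use Proposition~\ref{timeestim2}), and $\theta:=1\wedge(1-d_s+\alpha)$ when $\alpha>\tfrac{d_s}{2}$ (and use Proposition~\ref{timeestim1}, which is legitimate here since $\alpha>\tfrac{d_s}{2}>d_s-1$ as $d_s<2$); then $\Ebb[|u(s,x)-u(t,x)|^p]\le C\,|s-t|^{p\theta/2}$ on $[0,T]$. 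A short check over the three ranges of $\alpha$ shows $\theta\le 1$ always and $\tfrac{\theta}{2}=\delta^\alpha$ in each case (for $\tfrac{d_s}{2}<\alpha\le d_s$ one has $1-d_s+\alpha\le 1$, and for $\alpha>d_s$ the minimum equals $1$).

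Next I would fix $T>0$ and $p>\tfrac{2(d_H+1)}{\theta}$ and invoke Corollary~\ref{cty4} with $\beta=p$, spatial exponent $d_H+1+\gamma$ for $\gamma:=\tfrac{p}{2}-(d_H+1)>0$, and temporal exponent $d_H+1+\gamma'$ for $\gamma':=\tfrac{p\theta}{2}-(d_H+1)>0$; both lines of \eqref{ctyestim} hold with $C$ the larger of the two constants. Since $\theta\le 1$ we have $0<\gamma'\le\gamma$, so part~(1) of Theorem~\ref{cty3} supplies a version that is essentially $\delta_0$-H\"{o}lder on $[0,T]\times F$ with respect to $R_\infty$, where $\delta_0=\tfrac1p(\gamma\wedge\gamma')=\tfrac{\theta}{2}-\tfrac{d_H+1}{p}$, and part~(2) supplies, for each fixed $t$, essential $\delta_1$-H\"{o}lder continuity with respect to $R$, where $\delta_1=\tfrac1p(1+\gamma)=\tfrac12-\tfrac{d_H}{p}$. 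Letting $p\to\infty$ gives $\delta_0\to\tfrac{\theta}{2}=\delta^\alpha$ and $\delta_1\to\tfrac12$.

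It remains to assemble a single version. Taking sequences $T_n\uparrow\infty$ and $p_n\uparrow\infty$ and applying the above on each $[0,T_n]\times F$ produces versions $v_n$ of $u|_{[0,T_n]\times F}$; for $m\ge n$, $v_m|_{[0,T_n]\times F}$ and $v_n$ are two almost surely continuous versions of the same process on the separable space $[0,T_n]\times F$, hence coincide almost surely, so $\tilde u(t,x):=v_n(t,x)$ for any $n$ with $T_n\ge t$ is a well-defined version of $u$ on $[0,\infty)\times F$. On the almost sure event on which all these identifications and all the H\"{o}lder statements for the $v_n$ hold (a countable intersection of full-measure events), given $T>0$ and $\gamma<\delta^\alpha$ one picks $n$ with $T_n\ge T$ and with the value of $\delta_0$ associated to $p_n$ exceeding $\gamma$, and concludes that $\tilde u$ is $\gamma$-H\"{o}lder on $[0,T]\times F$, which is (1); for each fixed $t$ the same reasoning with $\delta_1\to\tfrac12$ yields (2). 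I expect the only real work to lie not in this assembly but in the bookkeeping of the first paragraph --- checking that the temporal exponent $\theta$ coming from the correct choice between Propositions~\ref{timeestim1} and \ref{timeestim2} equals $2\delta^\alpha$ in all three regimes, and that Proposition~\ref{timeestim1} is available precisely when $\alpha>d_s/2$ --- together with the routine but necessary care over null sets in passing from ``essentially $\delta_0(p_n)$ for each $n$'' to ``essentially $\delta^\alpha$''.
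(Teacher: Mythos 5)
Your proposal is correct and follows essentially the same route as the paper: upgrade the $L^2$ increment bounds of Propositions~\ref{spaceestim2}, \ref{timeestim1} and \ref{timeestim2} to $p$-th moment bounds via Gaussianity, feed them into Corollary~\ref{cty4} with $\beta=p$ and the exponents $\gamma,\gamma'$ you identify, let $p\to\infty$, and glue the resulting continuous versions over $T=n$ using separability of $[0,T]\times F$. The only cosmetic difference is that the paper first produces, for each fixed $T$, a version that is already essentially $\delta^\alpha$-H\"{o}lder on $[0,T]\times F$ and then glues in $T$, whereas you take the limits in $p$ and $T$ along a single diagonal sequence; your bookkeeping that $\theta/2=\delta^\alpha$ in all three regimes, that Proposition~\ref{timeestim1} applies exactly when $\alpha>d_s/2$ (since $d_s/2>d_s-1$), and that $\theta\le 1$ forces $\gamma'\le\gamma$, is all accurate.
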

\begin{proof}
Take $T > 0$ and consider $u_T$, the restriction of $u$ to $[0,T] \times F$. It is an easily verifiable fact that for every $p \in \Nbb$ there exists a constant $C_p' > 0$ such that if $Z$ is any centred real Gaussian random variable then
\begin{equation*}
\Ebb [Z^{2p}] = C_p' \Ebb[Z^2]^p.
\end{equation*}
We also know that $u_T$ is a centred Gaussian process on $[0,T] \times F$ by Theorem \ref{ptreg}.

We will treat the case $\alpha \leq \frac{d_s}{2}$, which is precisely the region of values of $\alpha$ for which Proposition \ref{timeestim2} will give us a better temporal H\"{o}lder exponent than Proposition \ref{timeestim1}. Propositions \ref{spaceestim2} and \ref{timeestim2} then give us the estimates
\begin{equation}\label{pestims}
\begin{split}
\Ebb \left[ (u_T(t,x) - u_T(t,y))^{2p} \right] &\leq C_p'C_2^pR(x,y)^p,\\
\Ebb \left[ (u_T(s,x) - u_T(t,x))^{2p} \right] &\leq C_p'C_4^p|s-t|^{p(1 - \frac{d_s}{2})}
\end{split}
\end{equation}
for all $s,t \in [0,T]$ and all $x,y \in F$. Taking $p$ arbitrarily large and then using Corollary \ref{cty4} we get a version $\tilde{u}_T$ of $u_T$ (that is, $\tilde{u}_T$ is a version of $u$ on $[0,T] \times F$) that satisfies the H\"{o}lder regularity conditions of the theorem for the given value of $T$. This works because any two almost surely continuous versions of $u_T$ must coincide almost surely since $[0,T] \times F$ is separable.

If now $T' > T$ and we construct a version $\tilde{u}_{T'}$ of $u$ on $[0,T'] \times F$ in the same way, then $\tilde{u}_{T'}$ must agree with $\tilde{u}_T$ on $[0,T] \times F$ almost surely since both are almost surely continuous on $[0,T] \times F$ which is separable. Therefore let $T = n$ for $n \in \Nbb$ and let $\Omega'$ be the almost sure event
\begin{equation*}
\Omega' = \bigcap_{n=1}^\infty \left\{ \text{$\tilde{u}_{n+1}$ agrees with $\tilde{u}_n$ on $[0,n] \times F$} \right\}.
\end{equation*}
Then for $(t,x) \in [0,\infty) \times F$ define $\tilde{u}(t,x) = \tilde{u}_{\lceil t \rceil + 1}(t,x)$ on $\Omega'$ and $\tilde{u}(t,x) = 0$ otherwise, and we are done.

Now if $\alpha > \frac{d_s}{2}$ then we use the temporal estimate of Proposition \ref{timeestim1} rather than the temporal estimate of Proposition \ref{timeestim2} in \eqref{pestims}.
\end{proof}
\begin{thm}[Continuous version is version of original solution]\label{thm:ctsver}
The collection of random variables $\tilde{u} = (\tilde{u}(t,x))_{(t,x) \in [0,\infty) \times F}$ constructed in Theorem \ref{SPDEreg2} is such that $(\tilde{u}(t,\cdot))_{t \in [0,\infty)}$ is an $\Hcal$-valued process, and moreover $(\tilde{u}(t,\cdot))_{t \in [0,\infty)}$ is an $\Hcal$-continuous version of the $\Hcal$-valued solution to \eqref{SPDE} found in Theorem \ref{existence} (with $u_0 = 0$).
\end{thm}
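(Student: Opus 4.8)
The plan is to establish three facts: (i) $\tilde u(t,\cdot) \in \Hcal$ for every $t \ge 0$; (ii) for each fixed $t$, $\tilde u(t,\cdot) = W^b_\alpha(t)$ almost surely in $\Hcal$, where $W^b_\alpha(t) = \sum_{k=1}^\infty (1+\lambda^b_k)^{-\frac{\alpha}{2}} X^{b,k}_t \phi^b_k$ as in \eqref{Wseries}; and (iii) $t \mapsto \tilde u(t,\cdot)$ is an $\Hcal$-continuous, predictable process. Granting these, Theorem~\ref{existence} tells us that $W^b_\alpha$ is the unique (up to versions) global solution to \eqref{SPDE} with $u_0 = 0$, so (ii) and (iii) say exactly that $(\tilde u(t,\cdot))_{t\ge 0}$ is an $\Hcal$-continuous version of that solution. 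The substance is in (i) and (ii), where one must reconcile the pointwise object of Theorem~\ref{ptreg} with the $\Hcal$-valued object of Theorem~\ref{existence}.

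For (i): by Theorem~\ref{SPDEreg2} the map $x \mapsto \tilde u(t,x)$ is almost surely continuous on $F$, hence jointly measurable in $(x,\omega)$; and Proposition~\ref{timeestim2} (with $s=0$ and any $T \ge t$) gives the uniform bound $\sup_{x \in F} \Ebb[u(t,x)^2] < \infty$, so that $\Ebb \int_F \tilde u(t,x)^2 \mu(dx) < \infty$ by Tonelli (using $\tilde u(t,x) = u(t,x)$ a.s. for each $x$ and that $\mu$ is a probability measure). Hence $\tilde u(t,\cdot) \in \Hcal$ almost surely and in $L^2(\Omega;\Hcal)$.

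For (ii): fix $t$ and put $f_K(x) = \sum_{k=1}^K (1+\lambda^b_k)^{-\frac{\alpha}{2}} X^{b,k}_t \phi^b_k(x)$, which is jointly measurable. The computation behind Theorem~\ref{ptreg} shows $f_K(x) \to u(t,x) = \tilde u(t,x)$ in $L^2(\Omega)$ for each $x$, and \eqref{pointfin} gives $\Ebb[f_K(x)^2] \le \Ebb[u(t,x)^2]$, so $\Ebb\big|f_K(x) - \tilde u(t,x)\big|^2$ is bounded uniformly in $x$; by dominated convergence $f_K \to \tilde u(t,\cdot)$ in $L^2(\Omega \times F, \Pbb \otimes \mu)$. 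Testing against $\phi^b_j$ and using orthonormality, $\int_F f_K(x)\phi^b_j(x)\mu(dx) = (1+\lambda^b_j)^{-\frac{\alpha}{2}} X^{b,j}_t$ for all $K \ge j$, so in the limit $\langle \tilde u(t,\cdot), \phi^b_j \rangle_\mu = (1+\lambda^b_j)^{-\frac{\alpha}{2}} X^{b,j}_t$ almost surely; this is the same value as $\langle W^b_\alpha(t), \phi^b_j \rangle_\mu$, read off directly from the $\Hcal$-convergent series \eqref{Wseries}. Since $(\phi^b_j)_{j\ge 1}$ is a complete orthonormal basis of $\Hcal$, it follows that $\tilde u(t,\cdot) = W^b_\alpha(t)$ almost surely in $\Hcal$.

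For (iii): for each $T>0$, Theorem~\ref{SPDEreg2}(1) gives that $\tilde u$ is almost surely jointly continuous on the compact space $[0,T]\times F$, hence uniformly continuous there, so $\sup_{x\in F}|\tilde u(t,x) - \tilde u(t_0,x)| \to 0$ as $t\to t_0$ within $[0,T]$, whence $\|\tilde u(t,\cdot) - \tilde u(t_0,\cdot)\|_\mu \to 0$; thus $t \mapsto \tilde u(t,\cdot)$ is almost surely $\Hcal$-continuous. Adaptedness is inherited from the $\Fcal_t$-measurability of each $X^{b,k}_t$, hence of each $u(t,x)$ and of its almost-sure modification $\tilde u(t,x)$, and an adapted $\Hcal$-continuous process is predictable. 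The only genuine obstacle is keeping straight the three modes of convergence used in (ii) — pointwise in $x$, in $\Hcal$-norm, and in $L^2(\Omega)$ — together with the joint measurability required to invoke Tonelli; both are cleanly handled by working throughout with the continuous version $\tilde u$ rather than the pointwise-defined $u$.
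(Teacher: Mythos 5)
Your proposal is correct and follows essentially the same route as the paper's proof: establish joint measurability and that $\tilde u(t,\cdot)\in\Hcal$, approximate by the truncated eigenfunction series $u^{(n)}(t,x)=\sum_{k\le n}(1+\lambda^b_k)^{-\alpha/2}X^{b,k}_t\phi^b_k(x)$, show $L^2(\Omega\times F,\Pbb\otimes\mu)$ convergence to $\tilde u(t,\cdot)$ (you via dominated convergence with a uniform second-moment bound, the paper by computing the tail sum directly), and then test against the orthonormal basis $(\phi^b_k)$ to conclude $\tilde u(t,\cdot)=W^b_\alpha(t)$ almost surely for each $t$. The only cosmetic differences are that the paper deduces $\tilde u(t,\cdot)\in\Hcal$ simply because a continuous function on the compact space $F$ is bounded, and that you additionally spell out adaptedness and predictability.
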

\begin{proof}
From Theorem \ref{SPDEreg2}, $\tilde{u}$ is almost surely continuous in $[0,\infty) \times F$. Each $\tilde{u}(t,x)$ is a well-defined random variable so by \cite[Lemma 4.51]{aliprantis2006}, $\tilde{u}$ is jointly measurable. Using continuity again, this implies that $\tilde{u}(t,\cdot) \in \Hcal$ for all $t \in [0,\infty)$ almost surely. We also have that $t \mapsto \tilde{u}(t,\cdot)$ is a continuous function from $[0,\infty)$ to $\Hcal$ for each $\omega \in \Omega$ and that each $\tilde{u}(t,\cdot)$ is a Borel measurable map from $\Omega$ to $\Hcal$; the latter follows from the joint continuity of $\tilde{u}$ and the fact that the Borel $\sigma$-algebra of $\Hcal$ is generated by the bounded linear functionals on $\Hcal$.

For each $n \geq 1$ and $(t,x) \in [0,\infty) \times F$, define
\begin{equation*}
u^{(n)}(t,x) = \sum_{k=1}^n (1+ \lambda^b_k)^{-\frac{\alpha}{2}} X^{b,k}_t \phi^b_k(x).
\end{equation*}
Then each $u^{(n)}$ is obviously jointly measurable, and by \eqref{pointfin}, $u^{(n)}(t,x) \to \tilde{u}(t,x)$ in $\Lcal^2(\Pbb)$ for each $(t,x) \in [0,\infty) \times F$. In fact by joint measurability we also have that
\begin{equation*}
\begin{split}
\Ebb\left[ \int_F \left( u^{(n)}(t,x) - \tilde{u}(t,x) \right)^2 \mu(dx) \right] &= \int_F \Ebb\left[ \left( u^{(n)}(t,x) - \tilde{u}(t,x) \right)^2 \right] \mu(dx)\\
&= \int_F \Ebb\left[ \left( \sum_{k=n+1}^\infty (1+ \lambda^b_k)^{-\frac{\alpha}{2}} X^{b,k}_t \phi^b_k(x) \right)^2 \right] \mu(dx)\\
&= \int_F \sum_{k=n+1}^\infty (1+ \lambda^b_k)^{-\alpha} \Ebb\left[ \left( X^{b,k}_t \right)^2 \right] \phi^b_k(x)^2 \mu(dx)\\
\end{split}
\end{equation*}
where in the last line we have used \eqref{pointfin}. Then by Tonelli's theorem,
\begin{equation*}
\begin{split}
\Ebb\left[ \int_F \left( u^{(n)}(t,x) - \tilde{u}(t,x) \right)^2 \mu(dx) \right] &= \sum_{k=n+1}^\infty (1+ \lambda^b_k)^{-\alpha} \Ebb\left[ \left( X^{b,k}_t \right)^2 \right]\\
&\to 0
\end{split}
\end{equation*}
as $n \to \infty$, by the fact that $\sum_{k=1}^\infty \Ebb[ ( X^{b,k}_t )^2 ] < \infty$. In particular, this implies that $\langle u^{(n)}(t,\cdot) , \phi^b_k \rangle_\mu \to \langle \tilde{u}(t,\cdot) , \phi^b_k \rangle_\mu$ in $\Lcal^2(\Pbb)$ as $n \to \infty$, for every $t \in [0,\infty)$ and every $k \geq 1$.

Recall that if $u_0 = 0$ then the solution to \eqref{SPDE} is simply given by the series
\begin{equation*}
W^b_\alpha(t) = \sum_{k=1}^\infty (1+ \lambda^b_k)^{-\frac{\alpha}{2}} X^{b,k}_t \phi^b_k.
\end{equation*}
It follows that for all $t \in [0,\infty)$ and $n \geq k$ we have that
\begin{equation*}
\langle u^{(n)}(t,\cdot) , \phi^b_k \rangle_\mu = (1+ \lambda^b_k)^{-\frac{\alpha}{2}} X^{b,k}_t = \langle W^b_\alpha(t) , \phi^b_k \rangle_\mu
\end{equation*}
almost surely. Therefore $\tilde{u}(t,\cdot) = W^b_\alpha(t)$ almost surely for all $t \in [0,\infty)$ and we are done.
\end{proof}

\begin{rem}\label{euclidequiv}
In \cite{hu2006} it is shown that under some mild conditions on the p.c.f.s.s. set $(F,(\psi_i)_{i=1}^M)$, it can be embedded into Euclidean space in such a way that its resistance metric $R$ is uniformly equivalent to some power of the Euclidean metric. Therefore in this case the conclusion of Theorem \ref{SPDEreg2} holds with respect to the spatial Euclidean metric, albeit with a different H\"{o}lder exponent. An example given in \cite[Section 3]{hu2006} is the $n$-dimensional Sierpinski gasket for $n \geq 2$, see Example \ref{examples}(2) of the present paper. This fractal has a natural embedding in $\Rbb^n$, and it is shown that in this case we have a constant $c > 0$ such that
\begin{equation*}
c^{-1}|x-y|^{d_w - d_f} \leq R(x,y) \leq c|x-y|^{d_w - d_f}
\end{equation*}
for all $x,y \in F \subseteq \Rbb^n$, where $d_w = \frac{\log(n+3)}{\log2}$ is the \textit{walk dimension} of the gasket and $d_f = \frac{\log(n+1)}{\log2}$ is its Euclidean Hausdorff dimension. These fractals all admit function-valued solutions to their respective SHEs but their ambient spaces $\Rbb^n$ do not.
\end{rem}
\begin{rem}
From Theorem \ref{SPDEreg2} we see that the operator $(1 - L_b)^{-\frac{\alpha}{2}}$ has a smoothing effect on the solution to \eqref{SPDE} as $\alpha$ increases. However the theorem suggests that this does not change the H\"{older} exponents of the solution until $\alpha$ reaches the value $\frac{d_s}{2}$. On the other hand, recall from Remark \ref{intervaltime} that if $F = [0,1]$ as in Example \ref{interval} then the temporal H\"{o}lder exponent of the solution to \eqref{SPDE}, viewed as a function of $\alpha$, is linearly strictly increasing in some neighbourhood of $\alpha = 0$. Intuitively this phenomenon should occur for any $F$. We therefore conjecture that the H\"{older} exponent in Theorem \ref{SPDEreg2}(1) is not sharp when $\alpha \in (0,d_s)$, and is in fact equal to the exponent obtained in Theorem \ref{existence} for the solution interpreted as an $\Hcal$-valued process.
\end{rem}
We have shown regularity properties of the solution to \eqref{SPDE} in the case $u_0 = 0$, and henceforth we assume that we are dealing with the continuous version of this solution. If we now take an arbitrary initial condition $u_0 = f \in \Hcal$, then obviously the same results may not hold since $f$ may be very rough. We can however prove continuity in \textit{almost} the entire domain $[0,\infty) \times F$.
\begin{thm}
Let $u:[0,\infty) \times F \to \Rbb$ be the solution to \eqref{SPDE} with initial condition $u_0 = f \in \Hcal$. Then $u$ (has a version which) is almost surely continuous in $(0,\infty) \times F$ with respect to $R_\infty$. Moreover, if either
\begin{enumerate}
\item $b=N$ and $f$ is continuous on $F$, or
\item $b=D$ and $f$ is continuous on $F$ with $f|_{F^0} \equiv 0$,
\end{enumerate}
then $u$ (has a version which) is almost surely continuous in $[0,\infty) \times F$ with respect to $R_\infty$.
\end{thm}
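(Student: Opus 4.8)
The plan is to exploit linearity. By the remark following Definition~\ref{mild}, the mild solution with $u_0=f$ is $u(t,\cdot)=S^b_tf+W^b_\alpha(t)$, where $W^b_\alpha$ is the $u_0=0$ solution. By Theorems~\ref{SPDEreg2} and~\ref{thm:ctsver}, $W^b_\alpha$ already possesses a version $\tilde u=(\tilde u(t,x))_{(t,x)\in[0,\infty)\times F}$ that is almost surely jointly $R_\infty$-continuous on all of $[0,\infty)\times F$ and satisfies $\tilde u(t,\cdot)=W^b_\alpha(t)$ in $\Hcal$ for every $t$. Hence it suffices to produce a representative of the deterministic map $v(t,x):=(S^b_tf)(x)$ that is jointly continuous on $(0,\infty)\times F$ in general, and on $[0,\infty)\times F$ under hypothesis (1) or (2); one then sets $u(t,x):=v(t,x)+\tilde u(t,x)$, which for each fixed $t$ lies in $\Hcal$ and agrees with $S^b_tf+W^b_\alpha(t)$ almost surely, so it is a version of the solution (the measurability housekeeping is as in the proof of Theorem~\ref{thm:ctsver}).

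For continuity on $(0,\infty)\times F$, with no regularity assumed on $f$, I would use the eigenfunction expansion $v(t,\cdot)=\sum_{k\ge1}f_ke^{-\lambda^b_kt}\phi^b_k$, where $f_k=\langle\phi^b_k,f\rangle_\mu$ and $\sum_k f_k^2<\infty$. Fix $t_0>0$. By Proposition~\ref{spectra} we have $\|\phi^b_k\|_\infty\le c_3|\lambda^b_k|^{d_s/4}$ and $c_1k^{2/d_s}\le\lambda^b_k\le c_2 k^{2/d_s}$ for $k\ge2$ (the at most one zero eigenvalue, for which $\phi^b_1\equiv1$ by Remark~\ref{consteval}, contributes a harmless bounded term), so by Cauchy--Schwarz
\[
\sum_{k\ge1}|f_k|\,e^{-\lambda^b_kt_0}\|\phi^b_k\|_\infty\le\Big(\sum_{k\ge1}f_k^2\Big)^{1/2}\Big(1+\sum_{k\ge2}c_3^2|\lambda^b_k|^{d_s/2}e^{-2\lambda^b_kt_0}\Big)^{1/2}<\infty,
\]
the last sum converging since polynomial growth is dominated by the exponential decay. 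Thus the series for $v$ converges uniformly on $[t_0,\infty)\times F$; since the $\phi^b_k$ are continuous (they lie in $\Dcal$, hence are $\frac12$-H\"{o}lder for $R$) and $t\mapsto e^{-\lambda^b_kt}$ is continuous, each partial sum is continuous there, so $v$ is continuous on $[t_0,\infty)\times F$, and letting $t_0\downarrow0$ gives joint continuity on $(0,\infty)\times F$. This continuous $v(t,\cdot)$ is $\mu$-a.e.\ equal to $S^b_tf$, being the uniform (hence $\Hcal$-) limit of the same partial sums, so $u=v+\tilde u$ is as required for the first assertion.

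For continuity up to $t=0$ I would reduce to strong continuity of the semigroup in the supremum norm. Under (1) the Feller diffusion $X^N$ gives a strongly continuous contraction semigroup $S^N$ on $C(F)$, and under (2) $S^D$ restricts to a strongly continuous contraction semigroup on $C_0(F):=\{g\in C(F):g|_{F^0}=0\}$, the vanishing condition on $f$ being forced since $X^D$ is absorbed on $F^0$; both facts are standard for these diffusions, cf.\ \cite{barlow1998}. In either case $t\mapsto S^b_tf$ is continuous from $[0,\infty)$ into $C(F)$, and composing with the evaluation map $C(F)\times F\to\Rbb$, $(g,x)\mapsto g(x)$ --- jointly continuous because $F$ is compact --- shows $v$ is continuous on $[0,\infty)\times F$, hence so is $u=v+\tilde u$. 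The main obstacle is exactly this last step: one needs the Feller property, i.e.\ strong continuity of $S^N$ (resp.\ $S^D$) on $C(F)$ (resp.\ $C_0(F)$) rather than merely on $\Hcal$. If one prefers not to quote it, it can be obtained directly from $S^N_tf(x)=\Ebb^x[f(X^N_t)]$, bounded convergence, and a uniform-in-$x$ estimate on $\Pbb^x(R(X^N_t,x)\ge\delta)$ as $t\downarrow0$ coming from the continuity of the sample paths of $X^N$; the Dirichlet case then follows by comparing $|S^D_t g(x)|\le S^N_t|g|(x)$.
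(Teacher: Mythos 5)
Your proof is correct and follows essentially the same route as the paper: decompose the solution as $u(t,x) = (S^b_t f)(x) + W^b_\alpha(t)(x)$, invoke Theorems~\ref{SPDEreg2} and~\ref{thm:ctsver} for the stochastic convolution, and then reduce to joint continuity of the deterministic map $(t,x)\mapsto (S^b_t f)(x)$. The paper handles that last step by citing Kigami [Proposition~5.2.6], while you unwind the same content directly (eigenfunction expansion plus Weierstrass $M$-test on $[t_0,\infty)\times F$ for the interior case, and the Feller/strong continuity property on $C(F)$ or $C_0(F)$ for continuity up to $t=0$), so the substance is identical.
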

\begin{proof}
Theorem \ref{SPDEreg2} gives us that the map $(t,x) \mapsto W^b_\alpha(t)(x)$ (has a version which) is almost surely continuous in $[0,\infty) \times F$. We know that if $t > 0$ then $S^b_t$ maps $\Hcal$ into $\Dcal$, and in particular into the space of continuous functions. Thus it makes sense to talk about $S^b_tf(x)$ for $x \in F$. Define
\begin{equation*}
u(t,x) := S^b_tf(x) + W^b_\alpha(t)(x),
\end{equation*}
then it suffices to prove continuity of the map $(t,x) \mapsto S^b_tf(x)$. This follows in the same way as the proof of \cite[Proposition 5.2.6]{kigami2001}. The last two statements in the theorem are immediate corollaries of \cite[Proposition 5.2.6]{kigami2001}.
\end{proof}

\section{Invariant measure}\label{invar}
We conclude with a brief description of the long-time behaviour of the solutions to \eqref{SPDE}. In this section we allow the initial condition $u_0$ to be an $\Hcal$-valued random variable which is independent of $W$.
\begin{defn}
An \textit{invariant measure} for the SPDE \eqref{SPDE} is a probability measure $\nu_\infty$ on $\Lcal^2(F,\mu) = \Hcal$ such that if $u$ is the solution to \eqref{SPDE} with random initial condition $u_0 \sim \nu_\infty$ (independent of $W$) then $u(t) \sim \nu_\infty$ for all $t > 0$.
\end{defn}
In the following theorems, let $(Z_k)_{k=1}^\infty$ be a sequence of independent and identically distributed one-dimensional standard Gaussian random variables.
\begin{thm}
Let $b=D$. Then \eqref{SPDE} has a unique invariant measure $\nu_\infty^D$ and it is given by
\begin{equation*}
\nu_\infty^D = \Law \left( \sum_{k=1}^\infty (1 + \lambda^D_k)^{-\frac{\alpha}{2}} (2\lambda^D_k)^{-\frac{1}{2}} Z_k \phi^D_k \right).
\end{equation*}
If $u$ is a solution to \eqref{SPDE} in the case $b=D$ then $u(t)$ converges weakly to $\nu_\infty^D$ as $t \to \infty$ regardless of its initial distribution $\Law(u_0)$.
\end{thm}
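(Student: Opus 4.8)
The plan is to exploit the Gaussian structure of the solution together with the strict positivity of the Dirichlet spectrum, following the standard template for ergodicity of linear SPDEs (see \cite{daprato2014}).

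\emph{Step 1: $\nu_\infty^D$ is a probability measure on $\Hcal$.} Since $b = D$ forces $\lambda^D_1 > 0$ by Remark \ref{consteval}, Proposition \ref{spectra} gives $\lambda^D_k \geq c_1 k^{2/d_s}$ for all $k \geq 1$, so
\begin{equation*}
\sum_{k=1}^\infty \frac{(1+\lambda^D_k)^{-\alpha}}{2\lambda^D_k} \leq \frac12\sum_{k=1}^\infty (\lambda^D_k)^{-1} \leq C\sum_{k=1}^\infty k^{-2/d_s} < \infty
\end{equation*}
because $d_s < 2$. Hence the series defining $\nu_\infty^D$ converges in $\Lcal^2(\Pbb;\Hcal)$ and almost surely, so $\nu_\infty^D$ is the law of a centred Gaussian $\Hcal$-valued random variable whose covariance operator $Q_\infty$ is diagonal in the basis $(\phi^D_k)_{k\geq 1}$ with eigenvalues $(1+\lambda^D_k)^{-\alpha}(2\lambda^D_k)^{-1}$.

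\emph{Step 2: invariance.} If $u_0 \sim \nu_\infty^D$ is independent of $W$, then $u(t) = S^D_t u_0 + W^D_\alpha(t)$ is a sum of two independent centred Gaussian $\Hcal$-valued variables, hence centred Gaussian. By \eqref{Wseries} the covariance of $W^D_\alpha(t)$ is diagonal in $(\phi^D_k)$ with eigenvalues $(1+\lambda^D_k)^{-\alpha}\frac{1-e^{-2\lambda^D_k t}}{2\lambda^D_k}$, while the covariance of $S^D_t u_0$ is $S^D_t Q_\infty S^D_t$, diagonal with eigenvalues $e^{-2\lambda^D_k t}(1+\lambda^D_k)^{-\alpha}(2\lambda^D_k)^{-1}$. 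Adding, the $k$-th eigenvalue of the covariance of $u(t)$ is exactly $(1+\lambda^D_k)^{-\alpha}(2\lambda^D_k)^{-1}$, so $u(t) \sim \nu_\infty^D$ for every $t > 0$.

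\emph{Step 3: convergence and uniqueness.} For general $\Law(u_0)$, write again $u(t) = S^D_t u_0 + W^D_\alpha(t)$. Since $\lambda^D_1 > 0$, $\Vert S^D_t u_0\Vert_\mu \leq e^{-\lambda^D_1 t}\Vert u_0\Vert_\mu \to 0$ almost surely. For the convolution term I would use the time-reversal identity $W^D_\alpha(t) \stackrel{d}{=} \int_0^t S^D_s(1-L_D)^{-\frac{\alpha}{2}}dW(s) =: Y_t$; the family $(Y_t)$ is Cauchy in $\Lcal^2(\Pbb;\Hcal)$ as $t\to\infty$, since for $t' > t$, $\Ebb\Vert Y_{t'}-Y_t\Vert_\mu^2 = \sum_k (1+\lambda^D_k)^{-\alpha}\frac{e^{-2\lambda^D_k t}-e^{-2\lambda^D_k t'}}{2\lambda^D_k} \to 0$ by dominated convergence using Step 1, so $Y_t \to Y_\infty := \int_0^\infty S^D_s(1-L_D)^{-\frac{\alpha}{2}}dW(s)$ in $\Lcal^2(\Pbb;\Hcal)$, and a covariance computation identifies $\Law(Y_\infty) = \nu_\infty^D$. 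Thus $W^D_\alpha(t) \Rightarrow \nu_\infty^D$ weakly on $\Hcal$, and by Slutsky's theorem $u(t) \Rightarrow \nu_\infty^D$ irrespective of $\Law(u_0)$. Finally, if $\nu$ is any invariant measure, running the equation from $u_0 \sim \nu$ gives $u(t) \sim \nu$ for all $t$ but also $u(t) \Rightarrow \nu_\infty^D$, whence $\nu = \nu_\infty^D$.

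The one genuinely non-routine point is Step 3: upgrading the easy coordinatewise (one-dimensional Ornstein--Uhlenbeck) convergence to honest weak convergence on the infinite-dimensional space $\Hcal$. The time-reversal plus $\Lcal^2(\Pbb;\Hcal)$-Cauchy argument handles this — and the attendant tightness — cleanly; one must only be careful to justify the time-reversal identity at the level of the Hilbert-space-valued stochastic integral, which is standard (\cite{daprato2014}). Everything else reduces to the summability estimate already used in the proof of Theorem \ref{existence}.
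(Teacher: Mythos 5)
Your proposal is correct, and Steps~1 and~2 are essentially the paper's: Step~1 is verbatim the paper's summability estimate, and your covariance-operator computation in Step~2 is a repackaging of the paper's observation that each coordinate $e^{-\lambda^D_k t}(2\lambda^D_k)^{-1/2}Z_k + X^{D,k}_t$ is a one-dimensional Ornstein--Uhlenbeck process started at its stationary law. Where you genuinely depart from the paper is Step~3. The paper records $\Vert S^D_t\Vert = e^{-\lambda^D_1 t} \to 0$ and then invokes an abstract ergodicity theorem (\cite[Theorem 11.20]{daprato2014}, or equivalently \cite[Proposition 5.23]{hairer2009}) to get uniqueness and weak convergence in one stroke. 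You instead give a self-contained argument: time reversal to replace $W^D_\alpha(t)$ in law by $Y_t = \int_0^t S^D_s(1-L_D)^{-\alpha/2}\,dW(s)$, an $\Lcal^2(\Pbb;\Hcal)$-Cauchy estimate producing the limit $Y_\infty$ with $\Law(Y_\infty)=\nu_\infty^D$, Slutsky (or, equivalently, continuity of convolution under weak convergence, since $S^D_t u_0$ and $W^D_\alpha(t)$ are independent) to dispose of the initial-condition term, and a one-line argument for uniqueness. Your route buys transparency and avoids the external citation at the cost of a few lines; it is exactly what the cited theorem does under the hood for this class of linear equations. One small point worth making explicit in a write-up: the time-reversal identity $W^D_\alpha(t)\stackrel{d}{=}Y_t$ is an equality of marginal laws at each fixed $t$, not a process-level identity, but that is all the weak-convergence step requires.
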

\begin{proof}
We first show that the definition of $\nu_\infty^D$ makes sense. We have that
\begin{equation*}
\begin{split}
\Ebb\left[ \sum_{k=1}^\infty \left\Vert (1 + \lambda^D_k)^{-\frac{\alpha}{2}} (2\lambda^D_k)^{-\frac{1}{2}} Z_k \phi^D_k \right\Vert_\mu^2 \right] &= \Ebb\left[ \sum_{k=1}^\infty (1 + \lambda^D_k)^{-\alpha} (2\lambda^D_k)^{-1} Z_k^2 \right]\\
&\leq \sum_{k=1}^\infty (1 + \lambda^D_k)^{-\alpha} (2\lambda^D_k)^{-1}\\
&\leq \frac{c_1^{-1-\alpha}}{2} \sum_{k=1}^\infty k^{-\frac{2}{d_s}(1+\alpha)}\\
&< \infty,
\end{split}
\end{equation*}
so $\nu_\infty^D$ is indeed a well-defined probability measure on $\Hcal$. Now suppose $u$ has initial distribution $u_0 \sim \nu_\infty^D$. By Definition \ref{mild} and \eqref{mild0} we have that
\begin{equation*}
u(t) = \sum_{k=1}^\infty (1 + \lambda^D_k)^{-\frac{\alpha}{2}} \left( e^{-\lambda^D_kt} (2\lambda^D_k)^{-\frac{1}{2}} Z_k + X^{D,k}_t \right) \phi^D_k,
\end{equation*}
where $(Z_k)_{k=1}^\infty$ and $(X^{D,k})_{k=1}^\infty$ are understood to be independent. Recall that $\lambda^D_1 > 0$ by Remark \ref{consteval}, so for every $k$, $X^{D,k}$ is a centred Ornstein-Uhlenbeck process with unit volatility and rate parameter $\lambda^D_k$. Moreover, $t \mapsto e^{-\lambda^D_kt} (2\lambda^D_k)^{-\frac{1}{2}} Z_k + X^{D,k}_t$ is an Ornstein-Uhlenbeck process with unit volatility, rate parameter $\lambda^D_k$ and initial distribution given by the law of $(2\lambda^D_k)^{-\frac{1}{2}} Z_k$, which turns out to be exactly its invariant measure (which we leave as an exercise for the reader). Thus the law of $u(t)$ is equal to $\nu_\infty^D$ for all $t > 0$, so $\nu_\infty^D$ is an invariant measure. We also see that for all $f \in \Hcal$,
\begin{equation*}
\Vert S^D_tf \Vert_\mu^2 = \sum_{k=1}^\infty e^{-2\lambda^D_kt}f_k^2 \leq e^{-2\lambda^D_1t} \sum_{k=1}^\infty f_k^2 = e^{-2\lambda^D_1t} \Vert f \Vert_\mu^2
\end{equation*}
where $f_k = \langle \phi^D_k,f \rangle_\mu$, so $\lim_{t\to\infty} \Vert S^D_t \Vert = 0$. Then the uniqueness and weak convergence results are direct consequences of \cite[Theorem 11.20]{daprato2014} (or alternatively \cite[Proposition 5.23]{hairer2009}).
\end{proof}
In the case $b=N$ we do not have nearly as neat a result, but there exists a decomposition of $u$ into two independent processes, one of which has similar invariance properties to the $b=D$ case and the other of which is simply a Brownian motion.
\begin{defn}
Let $\Hcal_1 \subseteq \Hcal$ be the space spanned by $\phi^N_1$, which we recall from Remark \ref{consteval} to be the constant function $\phi^N_1 \equiv 1$. Let $\Hcal_1^\perp$ be its orthogonal complement. Let $\pi_1:\Hcal \to \Hcal_1$ be the orthogonal projection onto $\Hcal_1$ and let $\pi_1^\perp:\Hcal \to \Hcal_1^\perp$ be the orthogonal projection onto $\Hcal_1^\perp$.

Let $\star$ denote convolution of measures. For a measure $\nu$ on $\Hcal$, let $\pi_1^*\nu$ denote the pushforward of $\nu$ with respect to $\pi_1$, which is a measure on $\Hcal_1$.
\end{defn}
\begin{thm}
Define the probability measure $\nu_\infty^N$ on $\Hcal_1^\perp$ by
\begin{equation*}
\nu_\infty^N = \Law \left( \sum_{k=2}^\infty (1 + \lambda^N_k)^{-\frac{\alpha}{2}} (2\lambda^N_k)^{-\frac{1}{2}} Z_k \phi^N_k \right).
\end{equation*}
Let $u$ be a solution to \eqref{SPDE} with $b=N$. Then there exists a one-dimensional standard Wiener process $B = (B(t))_{t \geq 0}$ which is adapted to the filtration generated by $W$ such that $B$ and $u-B$ are independent processes, and if $u$ has initial distribution $u_0 \sim \nu \star \nu_\infty^N$ for some probability measure $\nu$ on $\Hcal_1$ then $u(t) - B(t) \sim \nu \star \nu_\infty^N$ for all $t > 0$. Moreover, if $u$ has initial distribution $u_0 \sim \nu_0$ for some probability measure $\nu_0$ on $\Hcal$ then $u(t) - B(t)$ converges weakly to $(\pi_1^*\nu_0) \star \nu_\infty^N$ as $t \to \infty$.
\end{thm}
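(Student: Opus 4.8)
\emph{Approach.} The plan is to split off the degenerate mode. Recall from Remark~\ref{consteval} that $\lambda^N_1 = 0$ and $\phi^N_1 \equiv 1$, so $\Hcal_1 = \Rbb\phi^N_1$, and that the process $X^{N,1}$ of~\eqref{OUproc} is then simply a standard one-dimensional Wiener process, $X^{N,1}_t = \int_0^t \phi^{N*}_1\,dW(s)$ with $\phi^{N*}_1 = \langle \mathbf{1},\cdot\rangle_\mu$, which is adapted to the filtration generated by $W$. I would set $B(t) := X^{N,1}_t$ and throughout identify the scalar $B(t)$ with the element $B(t)\phi^N_1 \in \Hcal_1 \subseteq \Hcal$, so that ``$u-B$'' denotes the $\Hcal$-valued process $(u(t) - B(t)\phi^N_1)_{t\geq 0}$.

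\emph{Decomposition and independence.} Expanding $u(t) = S^N_t u_0 + W^N_\alpha(t)$ in the basis $(\phi^N_k)$ and using $\lambda^N_1 = 0$, the $k=1$ term equals $(\langle\mathbf{1},u_0\rangle_\mu + B(t))\phi^N_1 = \pi_1(u_0) + B(t)\phi^N_1$, so
\begin{equation*}
u(t) - B(t)\phi^N_1 = \pi_1(u_0) + \pi_1^\perp(u(t)), \qquad \pi_1^\perp(u(t)) = \sum_{k=2}^\infty (1+\lambda^N_k)^{-\frac{\alpha}{2}}\bigl( e^{-\lambda^N_k t}\langle\phi^N_k, u_0\rangle_\mu + X^{N,k}_t\bigr)\phi^N_k .
\end{equation*}
The right-hand side is $\sigma\bigl(u_0, (X^{N,k})_{k\geq 2}\bigr)$-measurable; since $u_0$ is independent of $W$ and $(X^{N,k})_{k=1}^\infty$ is an independent family, this $\sigma$-algebra is independent of $B = X^{N,1}$, which yields independence of $B$ and $u-B$ as processes.

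\emph{Invariance and convergence.} If $u_0 \sim \nu\star\nu_\infty^N$ with $\nu$ on $\Hcal_1$, then $\pi_1(u_0)\sim\nu$ and $\pi_1^\perp(u_0)\sim\nu_\infty^N$ are independent, so we may write $\langle\phi^N_k,u_0\rangle_\mu = (1+\lambda^N_k)^{-\frac{\alpha}{2}}(2\lambda^N_k)^{-\frac{1}{2}}Z_k$ for $k\geq 2$ with $Z_k$ i.i.d.\ standard Gaussian and independent of $W$; then $\pi_1^\perp(u(t)) = \sum_{k\geq 2}(1+\lambda^N_k)^{-\frac{\alpha}{2}}\bigl(e^{-\lambda^N_k t}(2\lambda^N_k)^{-\frac{1}{2}}Z_k + X^{N,k}_t\bigr)\phi^N_k$, and exactly as in the case $b=D$ (where $\lambda^N_k > 0$ for $k\geq 2$) each coefficient process is an Ornstein--Uhlenbeck process started from its invariant law, hence stationary, so $\pi_1^\perp(u(t))\sim\nu_\infty^N$ for every $t$; being a function of $\pi_1^\perp(u_0)$ and $(X^{N,k})_{k\geq 2}$ it is moreover independent of $\pi_1(u_0)$, and therefore $u(t)-B(t)\phi^N_1\sim\nu\star\nu_\infty^N$. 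For an arbitrary initial law $\nu_0$, write $\pi_1^\perp(u(t)) = \pi_1^\perp(S^N_t u_0) + \pi_1^\perp(W^N_\alpha(t))$: the first term satisfies $\Vert\pi_1^\perp(S^N_t u_0)\Vert_\mu \leq e^{-\lambda^N_2 t}\Vert u_0\Vert_\mu \to 0$ almost surely, while the second is independent of $u_0$ and, by the $b=D$ computation applied on $\Hcal_1^\perp$ with the strongly stable semigroup $S^N_t|_{\Hcal_1^\perp}$ (or \cite[Theorem~11.20]{daprato2014}), converges weakly to $\nu_\infty^N$; since $\pi_1(u_0)\sim\pi_1^*\nu_0$ is independent of $\pi_1^\perp(W^N_\alpha(t))$, their sum converges weakly to $(\pi_1^*\nu_0)\star\nu_\infty^N$ by continuity of convolution under weak convergence, and absorbing the term $\pi_1^\perp(S^N_t u_0)\to 0$ via Slutsky gives $u(t)-B(t)\phi^N_1 \Rightarrow (\pi_1^*\nu_0)\star\nu_\infty^N$.

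\emph{Main obstacle.} The delicate point is this last step: for general $\nu_0$ the projections $\pi_1(u_0)$ and $\pi_1^\perp(u_0)$ need not be independent, so the law of $u(t)-B(t)\phi^N_1$ does not factor directly; the resolution is that the dependence enters $\pi_1^\perp(u(t))$ only through $\pi_1^\perp(S^N_t u_0)$, which vanishes asymptotically, leaving a genuinely independent pair in the limit --- and this has to be combined carefully with a Slutsky-type lemma for sums converging respectively in probability and in distribution on the Polish space $\Hcal$, together with the continuity of convolution with respect to weak convergence. Verifying that an Ornstein--Uhlenbeck process started from its invariant law is stationary (left as an exercise in the $b=D$ proof) and pinning down the precise meaning of ``$u-B$'' are the remaining routine matters.
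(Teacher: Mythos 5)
Your proof is correct and follows the paper's overall plan: split off the zero eigenvalue, set $B := X^{N,1}$ (the $\phi^N_1$-mode of the stochastic convolution), obtain independence of $B$ and $u-B$ from the independence of the family $(X^{N,k})_{k\geq 1}$ and of $u_0$, and treat $\pi_1^\perp u$ as the solution to the SPDE restricted to $\Hcal_1^\perp$, where the restricted semigroup is strongly stable, so the Dirichlet-case machinery (in particular \cite[Theorem~11.20]{daprato2014}) applies. The one step where you take a genuinely different route is the final weak-convergence claim for an arbitrary initial law $\nu_0$. The paper first treats a deterministic $u_0 = f$ and then conditions successively on $\pi_1(u_0)$ and $\pi_1^\perp(u_0)$, concluding by dominated convergence at the level of bounded continuous test functions $g$ on $\Hcal$. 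You instead write $u(t) - B(t)\phi^N_1 = \pi_1(u_0) + \pi_1^\perp(S^N_t u_0) + \pi_1^\perp(W^N_\alpha(t))$, observe that the middle term satisfies $\Vert \pi_1^\perp(S^N_t u_0)\Vert_\mu \leq e^{-\lambda^N_2 t}\Vert u_0\Vert_\mu \to 0$ almost surely, that the first and third terms are independent (the first being a function of $u_0$ and the third a function of $W$), and then invoke continuity of convolution under weak convergence together with a Slutsky-type lemma on the Polish space $\Hcal$. Your version makes explicit exactly where the possible dependence between $\pi_1(u_0)$ and $\pi_1^\perp(u_0)$ is washed out --- in the decaying term $\pi_1^\perp(S^N_t u_0)$ --- and avoids the two-stage conditioning; the paper's argument avoids having to appeal to a Slutsky lemma for Hilbert-space-valued variables. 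Both are sound; just make sure to state the Slutsky step carefully (convergence in distribution plus convergence to a constant in probability, on a separable metric space), since it is doing real work in your argument.
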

\begin{proof}
Recall that $\lambda^N_1 = 0$ and that $\lambda^N_k > 0$ for $k \geq 2$. Just as in the $b=D$ case we can prove that $\nu_\infty^N$ is a well-defined probability measure on $\Hcal_1^\perp$, and indeed on $\Hcal$ as well. We define
\begin{equation*}
B(t) := X^{N,1}_t = \int_0^t \phi^{N*}_1dW(s).
\end{equation*}
From our discussion after \eqref{OUproc} we know that $X^{N,1}$ is a standard one-dimensional Wiener process, and by Remark \ref{consteval}, $\phi^N_1 \equiv 1$. From the representation \eqref{Wseries} of $W^b_\alpha$ as a sum of independent stochastic processes it is clear that $u - B$ is independent of $B$.

Now suppose $u$ has initial distribution $u_0 \sim \nu \star \nu_\infty^N$ for some probability measure $\nu$ on $\Hcal_1$. The space $\Hcal_1$ is one-dimensional so let $Z_0$ be a real-valued random variable such that $Z_0\phi^N_1$ has law $\nu$. By Definition \ref{mild} and \eqref{mild0} we can write
\begin{equation*}
u(t) = \left( Z_0 + X^{N,1}_t \right) \phi^N_1 + \sum_{k=2}^\infty (1 + \lambda^N_k)^{-\frac{\alpha}{2}} \left( e^{-\lambda^N_kt} (2\lambda^N_k)^{-\frac{1}{2}} Z_k + X^{N,k}_t \right) \phi^N_k,
\end{equation*}
where again $Z_0$, $(Z_k)_{k=1}^\infty$ and $(X^{N,k})_{k=1}^\infty$ are understood to be independent. Now we observe that $\pi_1^\perp = \1bb_{(0,\infty)}(-L_N)$ using the functional calculus on self-adjoint operators, and in particular $\pi_1^\perp$ commutes with functions of $L_N$, including $S^N_t$. Then define
\begin{equation*}
\pi^\perp_1u(t) = u(t) - \left( Z_0 + X^{N,1}_t \right) =: u_1(t),
\end{equation*}
where we have identified scalars $c \in \Rbb$ with their associated constant functions $c\phi^N_1 \in \Hcal$.
It is then easily verifiable that $\pi_1^\perp W$ is a cylindrical Wiener process on $\Hcal_1^\perp$ and that $u_1$ is the (mild) solution to the SPDE on $\Hcal_1^\perp$ given by
\begin{equation}\label{projSPDE}
\begin{split}
du(t) &= L_Nu(t)dt + (1-L_N)^{-\frac{\alpha}{2}}\pi_1^\perp dW(t),\\
u(0) &= u_0 \in \Hcal_1^\perp
\end{split}
\end{equation}
with $u_0 \sim \nu_\infty^N$. Note that all operators in \eqref{projSPDE} commute with $\pi_1^\perp$ and so can be identified with their restriction to $\Hcal_1^\perp$ for the purposes of the above SPDE. By definition,
\begin{equation*}
u_1(t) = \sum_{k=2}^\infty (1 + \lambda^N_k)^{-\frac{\alpha}{2}} \left( e^{-\lambda^N_kt} (2\lambda^N_k)^{-\frac{1}{2}} Z_k + X^{N,k}_t \right) \phi^N_k.
\end{equation*}
Now just as in the $b=D$ case, using \cite[Theorem 11.20]{daprato2014} (or \cite[Proposition 5.23]{hairer2009}) we find that $\nu_\infty^N$ is the unique invariant measure for \eqref{projSPDE} and that the solution to \eqref{projSPDE} converges weakly to $\nu_\infty^N$ for any initial distribution on $\Hcal_1^\perp$. So we have that for all $t > 0$,
\begin{equation*}
u(t) - B(t) = Z_0 + u_1(t) \sim \nu \star \nu_\infty^N
\end{equation*}
as required.

We observe that if $u$ has deterministic initial value $u_0 = f \in \Hcal$ then this is equivalent to $u_0$ being distributed according to the convolution of Dirac measures $\delta_{f_1} \star \delta_{f_2}$ for $f_1 = \pi_1(f) \in \Hcal_1$, $f_2 = \pi_1^\perp(f) \in \Hcal_1^\perp$. By doing the usual eigenfunction expansion we have that $u(t) = f_1 + X^{N,1} + u_1(t)$ where $u_1$ is now the solution to \eqref{projSPDE} with initial value $f_2$. Thus $u(t) - B(t) = f_1 + u_1(t)$ which converges weakly to $\delta_{f_1} \star \nu_\infty^N$. Now assume $u$ has an arbitrary initial probability distribution $u_0 \sim \nu_0$ in $\Hcal$. By conditioning first on the value of $\pi_1(u_0) \in \Hcal_1$, then on the value of $\pi_1^\perp(u_0) \in \Hcal_1^\perp$ and then using the dominated convergence theorem we find that $\Ebb [g(u(t) - B(t))]$ converges to $ \left( (\pi_1^* \nu_0 \right) \star \nu_\infty^N)(g)$ for any continuous and bounded function $g$ on $\Hcal$. So we have weak convergence of $u(t) - B(t)$ to $(\pi_1^* \nu_0) \star \nu_\infty^N$.
\end{proof}

\section*{Acknowledgement}
The authors would like to thank an anonymous referee for simplifying their original proof of Proposition~\ref{resolvLip}.
The first author's work was supported by the ERC funded project ESig.
The work of the second author was supported by a United Kingdom Engineering and Physical Sciences Research Council studentship.

\bibliographystyle{alpha}

\end{document}